\newcommand{\zero}[1]{{#1}_{\scriptscriptstyle{(0)}}}
\newcommand{\one}[1]{{#1}_{\scriptscriptstyle{(1)}}}
\newcommand{\two}[1]{{#1}_{\scriptscriptstyle{(2)}}}
\newcommand{\mone}[1]{{#1}_{\scriptscriptstyle{(-1)}}}
\newcommand{\ot}{\otimes}
\newcommand{\bbC}{\mathbb{C}}
\newcommand{\id}{\mathrm{id} }
\newcommand{\ci}{\sqrt{-1} }
\newcommand{\del}[1]{{}^{#1}\delta}
\renewcommand{\cot}{\gamma}
\newcommand{\coin}[2]{\bar{\cot}\big({#1}\otimes{#2}\big)}
\newcommand{\co}[2]{\cot\big({#1}\ot{#2}\big)}
\newcommand{\cat}{\qMod{A}{B}{}{B}}
\newcommand{\tcat}{\qMod{A_\gamma}{B_\gamma}{}{B_\gamma}}
\newcommand{\catmodbb}{\qMod{}{B}{}{B}}
\newcommand{\cateibb}{{}_B{\mathcal{E}\mathcal{I}}_B}
\newcommand{\hm}[3]{{}_{#1}\mathrm{Hom}(#2, #3)}
\newcommand{\ev}{\mathrm{ev}}
\newcommand{\coev}{\mathrm{coev}}
\newcommand{\dc}{(\Omega^\bullet, \wedge, d)}
\newcommand{\dctwisted}{(\Omega_\gamma^{\bullet}, \wedge_\gamma, d_\gamma)}
\newcommand{\metric}{(g, ( ~, ~ ))}
\newcommand{\metrictwisted}{(g_\gamma,(~,~)_\gamma)}
\newcommand{\braces}[1]{$\mathrm{(}$#1$\mathrm{)}$}
\newcommand{\ol}[1]{\overline{#1}}
\newcommand{\form}[1]{\Omega^{#1}}
\newcommand{\uform}[1]{\Omega^{#1}_u}
\newcommand{\Title}{On two notions of torsion and metric compatibility of connections in noncommutative geometry}%
\newcommand{\ShortTitle}{\Title}%
\newcommand{\AuthorOne}{Bappa Ghosh}%
\newcommand{\AuthorTwo}{Jyotishman Bhowmick}
\newcommand{\AuthorThree}{Satyajit Guin}
\newcommand{\AuthorOneAddr}{%
Stat-Math Unit, Indian Statistical Institute, Kolkata, India 700108
}%
\newcommand{\AuthorThreeAddr}{Department of Mathematics and Statistics,
Indian Institute of Technology Kanpur,
Uttar Pradesh, India 208016}
\newcommand{\AuthorOneEmail}{%
bappa0697@gmail.com}
\newcommand{\AuthorTwoEmail}{jyotishmanbmath@gmail.com}
\newcommand{\AuthorThreeEmail}{sguin@iitk.ac.in}%
\newcommand{\AuthorOneThanks}{%
BG is supported by CSIR, India through the SPM Fellowship.
}%
\newcommand{\Keywords}{Mathematics, Your Specific Keyword, Another One}
\newcommand{\pdfTitle}{PDF Title}
\newcommand{\pdfAuthor}{PDF Author(s)}
\newcommand{\pdfSubject}{PDF Subject}
\newcommand{\pdfKeywords}{\Keywords}
\newcommand{\pdfCreator}{Write a creator name (e.g. MikTeX)}
\newcommand{\pdfCreationDate}{\today}
\newcommand{\pdfColorLink}{true}
\newcommand{\pdfLinkColor}{blue}
\newcommand{\pdfUrlColor}{blue}
\newcommand{\pdfCiteColor}{blue}
\theoremstyle{plain}
\newtheorem{theorem}{Theorem}[section]
\newtheorem{prop}[theorem]{Proposition}
\newtheorem{lem}[theorem]{Lemma}
\newtheorem{cor}[theorem]{Corollary}
\theoremstyle{plain}
\newtheorem{definition}[theorem]{Definition}
\newtheorem{example}[theorem]{Example}
\theoremstyle{plain}
\newtheorem{remark}[theorem]{Remark}
\numberwithin{equation}{section}
\newtheoremstyle{ser}
{8pt}
{8pt}
{\it}
{}
{\sf}
{:}
{6mm}
{}
\theoremstyle{ser}
\newtheoremstyle{serr}
{8pt}
{8pt}
{\normalfont}
{}
{\sf}
{.}
{6mm}
{}
\theoremstyle{serr}
\theoremstyle{ser}
\theoremstyle{ser}
\definecolor{indraRed}{rgb}{0.593, 0.183, 0.183}
\definecolor{indraPink}{rgb}{0.858, 0.188, 0.478}
\definecolor{indraBlue}{rgb}{0, 0.199, 0.398}
\definecolor{madridBlue}{rgb}{0.199, 0.199, 0.695}
\definecolor{metropolisThemeColor}{rgb}{0.105, 0.214, 0.234}
\definecolor{metropolisBarColor}{rgb}{0.984, 0.0.515, 0.015}
\definecolor{UBCblue}{rgb}{0.04706, 0.13725, 0.26667} 
\definecolor{UBCgrey}{rgb}{0.3686, 0.5255, 0.6235} 
\def\mathcolor#1#{\@mathcolor{#1}}
\def\@mathcolor#1#2#3{%
	\protect\leavevmode
	\begingroup
	\color#1{#2}#3%
	\endgroup
}
\newcommand{\ra}{\rightarrow} 
\newcommand{\Hom}{{\rm{Hom}}}
     \newcommand{\cE}{\mathcal E}		
     \newcommand{\cF}{\mathcal F}		
     		\newcommand{\kH}{\mathscr{H}}
\newcommand{\fN}{\mathfrak N}     		
     \newcommand{\cO}{\mathcal O}
\newcommand{\bN}{{\mathbb N}}
\begin{document}

\title[\ShortTitle]{\MakeUppercase\Title}
\author{\AuthorTwo}
\author{\AuthorOne}
\author{\AuthorThree} {\thanks{\AuthorOneThanks}}


\address{\AuthorOneAddr}
\email{\AuthorTwoEmail}
\address{\AuthorOneAddr}
\email{\AuthorOneEmail}
\address{\AuthorThreeAddr}
\email{\AuthorThreeEmail}



\begin{abstract}
  We compare the notions of metric-compatibility and torsion of a connection in the frameworks of Beggs-Majid and Mesland-Rennie. It follows that for $\ast$-preserving connections, compatibility with a real metric in the sense of Beggs-Majid corresponds to Hermitian connections in the sense of Mesland-Rennie. If the calculus is quasi-tame, the torsion zero conditions are  equivalent. A combination of these results proves the existence and uniqueness of Levi-Civita connections in the sense of Mesland-Rennie for unitary cocycle deformations of a large class of Riemannian manifolds as well as the Heckenberger-Kolb calculi on all quantized irreducible flag manifolds.    \end{abstract}
\maketitle%
\thispagestyle{empty}%
\section{Introduction}

The first attempts to understand torsion and metric compatibility of connections in the context of noncommutative geometry dates back to the 1990 s. While the paper \cite{frohlich99} studied the relevant concepts in the context of spectral triples (\cite{connes}), the authors of \cite{heckenbergerlc} obtained existence of Levi-Civita connections for covariant calculi on certain quantum groups. Beggs and Majid (\cite{BeggsMajid:Leabh}) developed a substantial amount of machinery for studying torsion and metric compatibility of bimodule connections and also proved the existence and uniqueness of Chern connections on noncommutative holomorphic bimodules. Following the work of Rosenberg (\cite{rosenberg}), the topic received renewed attention as different versions of torsion and metric compatibility were proposed and studied in the literature. We refer to \cite{peterkasheu,arnwil2,arnwil,article1,article2,article3} and references therein. Subsequently, existence and uniqueness results for several classes of quantum groups and their homogeneous spaces were obtained in \cite{aschieriweber,atiyahseq,JBSugato,matassalevicivita,LeviCivitaHK}. 

Recently, the authors of \cite{meslandrennie1} introduced the framework of Hermitian differential structures and proved a necessary and sufficient condition for the existence of Levi-Civita connections in their sense. They proved (\cite{meslandrennie1,meslandrennie3}) that the toric deformations of classical spectral triples as well as the Dabrowski-Sitarz spectral triple on the standard Podle\'{s} sphere satisfy the conditions of their theorem. 

A recent comparative study \cite{flamantMR} investigated how the approach taken by Mesland and Rennie (\cite{meslandrennie1}) relates to the  vector‑field approach of \cite{arnwil2, arnwil} as well as a different approach taken in \cite{article1,article2,article3}. In this article, we compare the methods proposed in \cite{BeggsMajid:Leabh} and \cite{meslandrennie1}. Thus, our objective is to clarify the relationships between the two definitions of metric compatibility as well as  torsion of connections. Then we combine our results to prove the existence and uniqueness of Levi-Civita connections in the sense of \cite{meslandrennie1} for real covariant metrics on the space of one-forms of unitary cocycle deformations of a large class of Riemannian manifolds and the Heckenberger-Kolb calculi on the quantized irreducible flag manifolds.

In Section \ref{20thaugust251}, we recall the framework of Beggs and Majid as developed in \cite{BeggsMajid:Leabh}. The starting point is the datum of a differential calculus $\dc$ over an algebra $B.$ The torsion of a (left) connection $\nabla: \Omega^1 (B) \rightarrow \Omega^1(B) \otimes_B \Omega^1(B)  $ is the left $B$-linear map $T_\nabla:= \wedge \circ \nabla - d: \Omega^1(B) \rightarrow \Omega^2(B).$ Note that the definition of torsion requires the data of both the one-forms and the two-forms. On the other hand, the metric compatibility of $\nabla$ makes sense for any first order differential calculus. However, the definition needs a bit more explanation since the compatibility can be defined in terms of a metric or an Hermitian metric. 

A metric on $\dc$ is a choice of a self-dual structure $\metric$ on the space of one-forms $\Omega^1$ of the calculus. Thus, $g$ is an element of $\Omega^1(B) \otimes_B \Omega^1(B) $ and $(~, ~): \Omega^1(B) \otimes_B \Omega^1(B) \rightarrow \Omega^1(B) $ is a $B$-linear map satisfying some compatibility conditions. If $(\nabla_{\Omega^1 (B)}, \sigma)$ is a bimodule connection (see Subsection \ref{4thaugust241}) on $\Omega^1(B),$ then $\nabla_{\Omega^1 (B)}$ extends to a connection $\nabla_{\Omega^1 (B) \otimes_B \Omega^1(B)}$ on $\Omega^1(B) \otimes_B \Omega^1(B).$ The authors of \cite{BeggsMajid:Leabh} call $\nabla_{\Omega^1(B)}$ to be compatible with a metric $\metric$ as above if $\nabla_{\Omega^1 (B) \otimes_B \Omega^1(B)} g = 0.$  When the algebra $B$ and the calculus $\dc$ have compatible $\ast$-structures, Beggs and Majid introduced a notion of Hermitian metrics given by certain $B$-bimodule isomorphisms $\kH: \overline{\Omega^1 (B) } \rightarrow \hm{B}{\Omega^1(B)}{B} $ and there is a notion of compatibility of connections (not necessarily bimodule connections) with $\kH$ (see Definition \ref{19thoct231}). If $\metric$ is a real metric (Definition \ref{23rddec24jb2}) on the space of one-forms of a $\ast$-differential calculus, then one can construct an Hermitian metric $\kH_g$ on $\Omega^1(B).$ Thus, it is a natural question to ask whether the compatibility of a connection $\nabla$ on $\Omega^1(B)$
with $\metric$ is related to the compatibility of $\nabla$ with $\kH_g.$ The result Proposition \ref{21stapril251} which was already observed in \cite{BeggsMajid:Leabh} states that these two notions coincide when the connection $\nabla$ is $\ast$-preserving.   This is the key result  which allows us to compare the frameworks of \cite{BeggsMajid:Leabh} and \cite{meslandrennie1}.

Mesland and Rennie's notion of compatibility is based on the framework of $\dagger$-bimodules. For an algebra $B,$  these are $B$-bimodules $\cE$ equipped with an antilinear involution $\dagger$ and a positive definite sesquilinear form  ${}_B\langle \langle ~ , ~ \rangle \rangle : \cE \times \cE \rightarrow B $ satisfying the condition ${}_B\langle \langle e b , f \rangle \rangle = {}_B\langle \langle e , f b^* \rangle \rangle $  for all $e, f \in \cE, b \in B.$ Connections on $\cE$ which are compatible with $ {}_B\langle \langle ~ ,  ~  \rangle \rangle $ are called Hermitian connections. In this article, we drop the positivity condition on $ {}_B\langle \langle ~ ,  ~  \rangle \rangle $ and work with a larger class of sesquilinear forms which we call weak $\dagger$-bimodules. If $(\Omega^1(B), d)$ is a first order $\ast$-differential calculus, then it is easy to see that weak $\dagger$-bimodules $(\Omega^1(B), \dagger,  {}_B\langle \langle ~ ,  ~  \rangle \rangle  )$  are closely related to Hermtian metrics on $\Omega^1(B)$ in the sense of \cite{BeggsMajid:Leabh}. However, our main goal in Section \ref{21staugust251} is to relate compatibility of connections with metrics $\metric$ on $\Omega^1(B)$ with Hermitian connections with respect to ${}_B\langle \langle ~ , ~ \rangle \rangle.$ This is where we assume $\metric$ to be real and use Proposition \ref{21stapril251}. Our main result in Section \ref{21staugust251} is Theorem \ref{9thmay252} where we prove that connections on $\Omega^1(B)$ compatible with real metrics $\metric$ are in one to one correspondence with connections which are Hermitian with respect to a canonical weak $\dagger$-bimodule structure on $\Omega^1(B)$ associated to $\metric.$

In Section \ref{19thjune251}, we compare the definitions of torsions of connections studied in \cite{BeggsMajid:Leabh} and \cite{meslandrennie1}. The authors of the latter paper consider second order differential structures given by a first order calculus $(\Omega^1, d)$ over an algebra $B$ equipped with a $\dagger$-structure and  postulate the existence of a $\dagger$-compatible $B$-bilinear idempotent 
$\psi: \Omega^1(B) \otimes_B \Omega^1(B) \rightarrow \Omega^1(B) \otimes_B \Omega^1(B) $ whose range contains certain junk $2$-tensors. This allows them to use the universal differential $ \Omega^1_u (B) \rightarrow \Omega^2_u (B) $ to define a map $d_{\psi}: \Omega^1(B) \rightarrow \Omega^1(B) \otimes_{B} \Omega^1(B) $ and a connection $\nabla$ on $\Omega^1(B)$ is called torsionless if
\begin{equation} \label{21staugust253}
(1 - \psi) \nabla = d_{\psi}.     
\end{equation}
In the first two subsections of Section \ref{19thjune251} where we discuss torsion, we have avoided the usage of any $\dagger$-structure on the calculus and thus, we have not assumed any compatibility between $\psi$ and any $\dagger$-structure. Thus, we have considered  the class of weak second order differential structures $(\Omega^1(B), d, \psi)$ such that $\Omega^1(B)$ is a sub-bimodule of an algebra $M.$ We clarify the relationship between the conditions \eqref{21staugust253} and the Beggs-Majid equation for zero-torsion for the Connes' calculus associated to the inclusion $\Omega^1(B) \hookrightarrow M. $ Subsequently, we focus our attention on tame differential calculi $\dc$ for which there exists a $B$-bilinear map $s: \Omega^2(B) \rightarrow \Omega^1(B) \otimes_B \Omega^1(B) $ satisfying the condition $\wedge \circ s = \text{id}_{\Omega^2(B)}. $ Such calculi were already studied in the literature for various purposes (see \cite{BeggsMajid:Leabh,article1,article3}). In Theorem \ref{14thjan251}, we prove that if $(\Omega^\bullet(B), \wedge, d, s)$ is a tame differential calculus, then  the Beggs-Majid definition of a torsion zero connection on $\Omega^1(B)$ coincides with \eqref{21staugust253} for a weak second order differential structure associated to the tame differential calculus. 

Thus, a combination of Theorem \ref{9thmay252} and theorem \ref{14thjan251} provides a passage between the frameworks of Beggs-Majid and Mesland-Rennie. This has been noted in Theorem \ref{thm:levicomp}. Modulo technicalities, this theorem implies that if we have a quasi-tame $\ast$-differential calculi on a $\ast$-algebra $B$ and a $\ast$-preserving unique Levi-Civita connection (in the sense of \cite{BeggsMajid:Leabh}) for a real metric $\metric$ on $\Omega^1(B),$ then there exists a unique Hermitian connection on $\Omega^1(B)$ satisfying \eqref{21staugust253}. This enables us to apply this result to two classes of Hopf-algebra covariant calculi, namely unitary cocycle deformations of a canonical  differential calculi on the set of real points $X (\mathbb{R}) $ of a smooth real affine variety $X$ and the Heckenberger-Kolb calculi. The proof of existence and uniqueness of Levi-Civita connections (in the sense of \cite{BeggsMajid:Leabh}) on these calculi for real covariant metrics were already derived in \cite{BeggsMajid:Leabh} and \cite{LeviCivitaHK}. Thus, we need to verify the hypotheses of Theorem \ref{thm:levicomp} for these examples. This is done in Section \ref{21staugust254}.

\vspace{0.15 cm}

\paragraph{\textbf{Acknowledgments:}}
We would like to thank Bram Mesland and Adam Rennie for some clarifications regarding \cite{meslandrennie1} and \cite{flamantMR}.  We are grateful to  R\'eamonn  \'O{} Buachalla for sharing the results of the preprint \cite{ReAleJun} and a careful reading of the manuscript. B.G. gratefully acknowledges Suvrajit Bhattacharjee for many insightful discussions and for pointing out the result \cite[Proposition 4.1]{junaidbuachalla1}.

\section{Preliminaries} \label{20thaugust251}
Throughout this article, unless other mentioned, our ground field will be $\mathbb{C}.$ All algebras are assumed to be unital. All unadorned tensor products are over $\mathbb{C}.$ 

Let $ (\mathcal{C}, \otimes) $ be an monoidal category with unit object $1_{\mathcal{C}}$. An object $M$ in the category ${\mathcal{C}}$ is said to have a~right dual if there exists an object $\prescript{\ast}{}{M}$ in ${\mathcal{C}}$ and  morphisms $\ev: M\otimes \prescript{\ast}{}{M} \to 1_{\mathcal{C}}$, $\coev: 1_{\mathcal{C}}\to \prescript{*}{}{M}\otimes M$ such that the equations
  \begin{align*}
      (\ev\otimes \id_M) (\id_M\otimes  \coev) =\id_M, \quad (\id_{\prescript{\ast}{}{M}}\otimes\ev)(\coev\otimes \id_{\prescript{\ast}{}{M}})  = \id_{\prescript{\ast}{}{M}}
  \end{align*}
are satisfied.  The left dual is defined  analogously (see \cite[Definition 2.10.1]{etingof2015tensor}).

For an algebra $B,$ the notation $\catmodbb$ will stand for the monoidal category of $B$-bimodules. 
Let $\cE, \cF$ be two objects in the category $\catmodbb$. Then,
\begin{align*}
	\hm{B}{\cE}{\cF}:=  \{f: \cE \to \cF : f \text{ is  left  $B$-linear}\}
\end{align*} 
is a   $B$-bimodule with the operations
\begin{align} \label{19thdec234}
	(b\cdot f)(e)=f(e\cdot b) \quad (f \cdot b)(e) = f(e)b,
\end{align}
where $f \in \prescript{}{B}{\Hom(\cE,B)}, e\in \cE $ and $ b \in B$.

Let us recall the definition of bar categories from \cite{BeggsMajid:Leabh}. For a monoidal category $ (\mathcal{C}, \otimes)$ and objects $X, Y $ of $ \mathcal{C}, $ we denote by flip the functor from  $\mathcal{C}\times \mathcal{C} $ to $\mathcal{C}  \times \mathcal{C}$ that sends the pair $(X,Y)$ to $ (Y,X)$. As usual, we will suppress the notations for the left unit, right unit as well as the associator of $\mathcal{C}$.

\begin{definition}$\mathrm{(}$\cite[Definition 2.99]{BeggsMajid:Leabh}$\mathrm{)}$ \label{15thjuly241}
	A bar category is a monoidal category $(\mathcal{C},\otimes,1_{\mathcal{C}})$ together with  a functor $\mathrm{bar}: \mathcal{C}\to \mathcal{C}$ (written as $X\mapsto \overline{X}$),
	a natural equivalence $\mathrm{bb}: \id_ {\mathcal{C}} \to \mathrm{bar}\circ \mathrm{bar}  $ between the identity and the $\mathrm{bar}\circ \mathrm{bar}$ functors on $\mathcal{C}$,
	an invertible morphism $ \star:1_{\mathcal{C}} \to \overline{1_{\mathcal{C}}},$ and
	a natural equivalence $\Upsilon$  between $\mathrm{bar}\circ \otimes$ and $\otimes \circ (\mathrm{bar}\times \mathrm{bar})\circ \mathrm{flip}$ from $\mathcal{C}\times \mathcal{C}$ to $\mathcal{C}$	
	such that the following compositions of morphisms are both equal to $1_{\overline{X}}:$
	$$ \overline{X} \xrightarrow{\cong} \overline{X \otimes 1_{\mathcal{C}}} \xrightarrow{\Upsilon_{X, 1_{\mathcal{C}}}} \overline{ 1_{\mathcal{C}}}\otimes  \overline{X} \xrightarrow{\star^{-1} \otimes \id}  1_{\mathcal{C}} \otimes \overline{X} \xrightarrow{\cong} \overline{X}, $$
	$$ \overline{X} \xrightarrow{\cong} \overline{ 1_{\mathcal{C}} \otimes X} \xrightarrow{\Upsilon_{ 1_{\mathcal{C}},X}}  \overline{X} \otimes\overline{1_{\mathcal{C}}}\xrightarrow{ \id \otimes  \star^{-1}}  \overline{X} \otimes  1_{\mathcal{C}} \xrightarrow{\cong} \overline{X}.$$ 
	Moreover, the following equations hold:
	$$ (\Upsilon_{Y,Z} \otimes \id) \Upsilon_{X, Y \otimes Z} = (\id \otimes \Upsilon_{X, Y}) \Upsilon_{X \otimes Y, Z}, ~ \overline{\star} \star = \mathrm{bb}_{1_{\mathcal{C}}}: 1_{\mathcal{C}} \to \overline{\overline{1_{\mathcal{C}}}}, ~ \overline{\mathrm{bb}_X} =\mathrm{bb}_{\overline{X}}: \overline{X} \to \overline{\overline{\overline{X}}}$$
	for all objects $X, Y, Z$ in $\mathcal{C}$.
	
	An object $X$ in a bar category is  called a star object if there is a morphism $\star_X: X\to \overline{X}$ such that 
    \begin{equation} \label{10thmay253}
    \overline{\star_X}\circ \star_X = \mathrm{bb}_X .
    \end{equation}
\end{definition}

In this article, for a vector space $M$, the symbol $\overline{M}$ will denote the conjugate vector space defined as $ \overline{M}:= \{ \overline{m}: m \in M \}.$ 
Moreover, if $M$ is $B$-bimodule, then $\overline{M}$ is equipped with the following $B$-bimodule structure:
\begin{align}\label{28thnov231}
	b\cdot \ol{m}= \ol{m\cdot b^{\ast}};\quad \ol{m}\cdot b= \ol{b^{\ast}\cdot m} \quad \text{for all $b\in B$, $m\in M$}.
\end{align}

\begin{example} $\mathrm{(}$\cite[Section 2.8]{BeggsMajid:Leabh}$\mathrm{)}$ \label{5thdec241jb}
 If $B$ is an algebra, then $\catmodbb$ is a bar category.	Indeed, if $M$ and $N$ are  objects in $\catmodbb,$ then define    $\mathrm{bar} (M) := \overline{M},$ and 
	for $f \in \Hom(M,N)$,  define 
	$ \ol{f}\in \Hom(\ol{M}, \ol{N})$ by $\ol{f}(\ol{x})=\ol{f(x)}$.	
	The natural equivalence $\mathrm{bb}$ is given by
	$ \mathrm{bb}_M ( m ) = \overline{\overline{m}} $ for all $m \in M.$
	Finally, for objects $M, N$ in $\catmodbb,$
	$ \Upsilon_{M, N} (  \overline{m \otimes_B n}  ) = \overline{n} \otimes_B \overline{m}.  $	
	\end{example}

\subsection{Differential calculi} \label{7thmay251}

\begin{definition} $\mathrm{(}$\cite[Definition 1.1]{woronowicz1989differential}$\mathrm{)}$ \label{5thmay251}
A first order differential  calculus on a unital algebra $B$ is a pair $(\Omega^1 ( B ), d), $ where $\Omega^1 ( B ) $ is a $B$-bimodule and $d: B \rightarrow \Omega^1 ( B ) $ is a derivation such that      $ \Omega^1 ( B ) = {\text span} \{ b dc: b, c \in B \}. $
\end{definition}

We will refer to $\Omega^1 ( B ) $ as the bimodule of one-forms for the first order differential calculus. The bimodule $\Omega^1 ( B ) $ can be realized as the quotient of the universal space of one-forms for $B.$
More concretely, if $m: B \otimes B \rightarrow B $ denotes the multiplication map, then the universal space of one-forms is defined as
$ \Omega^1_u ( B ) = {\rm Ker} ( m ) $ and we have a derivation
$$ \delta: B \rightarrow \Omega^1_u ( B ) ~ {\rm defined} ~ {\rm by} ~ \delta ( b ) =  1 \otimes b - b \otimes 1. $$
We recall  that if $B$ is a unital algebra and $M$ is a $B$-bimodule such that there exists a derivation $d: B \rightarrow M,$ then there exists a $B$-bimodule map $\pi: \Omega^1_u ( B ) \rightarrow M $ such that 
\begin{equation*} \label{9thjan254}
d = \pi \circ \delta.
\end{equation*}


\begin{definition}
A differential calculus on an algebra $B$ is a differential graded algebra $(\Omega^{\bullet}(B) = \bigoplus_{k\ge 0} \Omega^k(B),  \wedge, d)$ such that $\Omega^0(B) = B$ and $ \Omega^\bullet (B) $ is generated as an algebra by $B$ and $dB$. 
\end{definition}

\begin{example} \label{6thmay252}
For an algebra $B$ and $\Omega^1_u ( B ) $ as above, define $\Omega^0_u ( B ) = B, $
$$ \Omega^k_u ( B ) := \underbrace{\Omega^1( B ) \otimes_B \cdots \otimes_B \Omega^1 ( B )}_{k  \text{-times}}, ~ \Omega^\bullet_u ( B ) = \oplus_{k \geq 0} \Omega^k_u ( B ). $$
Let
$$ \delta: \Omega^k_u ( B ) \rightarrow \Omega^{k + 1}_u ( B ) ~ \text{ be defined as}  ~ \delta ( b_0 \delta ( b_1 ) \cdots \delta ( b_k ) ) = \delta ( b_0 ) \delta ( b_1 ) \cdots \delta ( b_k )   $$
 and $\wedge: \Omega^k_u ( B ) \otimes_B \Omega^l ( B ) \rightarrow \Omega^{k + l} ( B )  $ be the obvious map. Then, $ ( \Omega^\bullet_u ( B ), \wedge, d )  $ is a differential calculus, known as the universal differential calculus on $B.$
\end{example}

From now on, while referring to a differential calculus on $B$, we will often use the notations $\Omega^k$ and $\Omega^{\bullet}$ to denote $\Omega^k(B)$ and $\Omega^{\bullet}(B)$ respectively.

If $\dc$ is a differential calculus on $B,$ then a graded differential ideal of $\Omega^\bullet$ is a two-sided ideal $I = \oplus_{k \geq 0} I_k, $ where $I_k$ is a two-sided ideal of $\Omega^k$ for all $k \geq 0,$ and  $d ( I_k) \subseteq I_{k+1}. $
In this case, one has a quotient differential calculus $( \widetilde{\Omega^\bullet}, \widetilde{\wedge}, \widetilde{d} )$ where $ \widetilde{\Omega^\bullet} = \oplus_{k \geq 0} \widetilde{\Omega^k},$
\begin{equation*} 
 \widetilde{\Omega^k} = \Omega^k/I_k, \widetilde{\wedge} ( ( \omega + I_k  ) \otimes_B ( \eta + I_l  )  ):= \omega \wedge \eta + I_{k + l}, ~ \widetilde{d} ( \omega + I_k  ) = d \omega + I_{k + 1}
 \end{equation*}
for all $\omega \in \Omega^k, \eta \in \Omega^l.$




We end this subsection with the definition of $\ast$-differential calculi.

\begin{definition} \label{9thmay253}
Let $B$ be a $\ast$-algebra. A first order differential calculus $( \Omega^1, d )$ on $B$ is called a first order $\ast$-differential calculus if there exists an antilinear involution $\ast: \Omega^1 \rightarrow \Omega^1 $ such that for all $b, c \in B$ and for all $\omega \in \Omega^1,$
$$ ( b \omega c )^\ast = c^* \omega^* b^* ~ \text{and} ~ ~ d ( b^* ) = d ( b )^*. $$
More generally, a differential calculus $(\Omega^{\bullet}, \wedge, d) $ on a $\ast$-algebra  $B$ is called a $\ast$-differential calculus if there exists a conjugate linear involution $\ast: \Omega^{\bullet} \to \Omega^{\bullet} $  extending the map $\ast: B \rightarrow B $ such that $\ast(\Omega^k)\subseteq \Omega^k$ and moreover,
\begin{equation}
    \label{eq:starcal}  (d\omega)^{\ast} = d(\omega^{\ast}), ~ (\omega \wedge \nu)^{\ast} = (-1)^{kl} \nu^{\ast} \wedge \omega^{\ast}
\end{equation}
for  all $ \omega \in \Omega^{k}, \nu \in \Omega^l $.
\end{definition}

\begin{remark} \label{12thmay252}
If $(\Omega^1, d )$ is a first order $\ast$-differential calculus, consider the $B$-bimodule map
\begin{equation} \label{14thmay251}
 \star_{\Omega^1}: \Omega^1 \rightarrow \overline{\Omega^1} ~ \text{defined by} ~ \star_{\Omega^1} ( \omega ) = \overline{\omega^\ast}.
 \end{equation}
It follows that $(\Omega^1, \star_{\Omega^1})$ is a star object (see Definition \ref{15thjuly241}) of the bar category $\catmodbb.$ 
\end{remark}

\subsection{Connections} \label{4thaugust241}

If  $(\Omega^{\bullet}, \wedge, d) $ is a differential calculus on an algebra $B$, then a left connection on a $B$-bimodule $\mathcal{E}$ is a $\mathbb{C}$-linear map
$\nabla: \mathcal{E} \rightarrow \Omega^1 \otimes_B \mathcal{E}$ such that
$$\nabla (b e) = b \nabla (e)  + db \otimes_B e $$
for all $e \in \mathcal{E}$ and for all $b \in B$. A right connection on $\cE$ is a $\mathbb{C}$-linear map $\nabla: \cE \rightarrow \cE \otimes_B \Omega^1 $ such that $\nabla ( e b ) = \nabla ( e ) b + e \otimes_B db. $

Throughout this article, unless otherwise stated, we will use the word connection to mean a left connection.

\begin{definition} \label{9thjan252}
If $\nabla$ is a left connection on $\Omega^1,$ then the torsion of $\nabla,$ denoted by $T_\nabla$ is defined as
$ T_\nabla: = \wedge \circ \nabla - d. $
The connection $\nabla$ is said to be torsionless if $T_\nabla = 0.$
\end{definition}
Note that, if $\nabla$ is a connection, then $T_\nabla$ is a left $B$-linear map. We should also mention that for connections on the space of one-forms on a spectral triple (\cite{connes}), a spectral definition of torsion has been studied, for which we refer to \cite{dabrowskisitarzadv}.

A left connection $\nabla$ on a $B$-bimodule $\mathcal{E}$ is called a left $\sigma$-bimodule connection if there exists a $B$-bimodule map 
$\sigma: \mathcal{E} \otimes_B \Omega^1 \rightarrow \Omega^1 \otimes_B \mathcal{E}$
such that
\begin{equation*}
	\nabla (e b) = \nabla (e) b + \sigma (e \otimes_B db)
\end{equation*}
for all $e \in \mathcal{E}$ and for all $b \in B$.
Similarly, a right connection $\nabla$ on $\cE$ is said to be a right bimodule connection if there exists a $B$-bimodule map $\sigma: \Omega^1 \otimes_B \cE \rightarrow \cE \otimes_B \Omega^1  $ such that
\begin{equation*} \label{3rdjune251}
 \nabla ( b e ) = b \nabla ( e ) + \sigma ( db \otimes_B e ).   
\end{equation*}
Suppose that $(\nabla_{\mathcal{E}}, \sigma_{\mathcal{E}}) $ and $ (\nabla_ {\mathcal{F}}, \sigma_ {\mathcal{F}}) $ are bimodule connections  on $B$-bimodules $\mathcal{E}$ and $\mathcal{F}$.  Then by \cite[Theorem 3.78]{BeggsMajid:Leabh},  we have a bimodule connection $ (\nabla_{\mathcal{E} \otimes_B \mathcal{F}}, \sigma_{\mathcal{E} \otimes_B \mathcal{F}}) $ on $\mathcal{E} \otimes_B \mathcal{F} $ defined as
\begin{equation} \label{4thmay241}
	\nabla_{\mathcal{E} \otimes_B \mathcal{F}}:= \nabla_{\mathcal{E}} \otimes_B \id + (\sigma_{\mathcal{E}} \otimes_B \id) (\id \otimes_B \nabla_{\mathcal{F}}), ~ \sigma_ {\mathcal{E} \otimes_B \mathcal{F}}  =  (\sigma_{\mathcal{E}} \otimes_B \id)  (\id \otimes_B \sigma_{\mathcal{F}}).
\end{equation}

\begin{definition} \braces{\cite[Definition 3.85]{BeggsMajid:Leabh}} \label{15thapril252}
 Let $(\Omega^1, d)$ be a $\ast$-differential calculus on a $\ast$-algebra $B$ and $\cE$ be  a star object in the bar category $\catmodbb.$ A bimodule connection $(\nabla, \sigma)$ on $\cE$ is said to be $\ast$-preserving if
 \begin{equation*} \label{14thmay254}
  \overline{\sigma} \Upsilon^{-1} ( \star_{\Omega^1} \otimes_B \star_{\cE} ) \nabla ( e ) = \overline{\nabla} \star_{\cE} ( e ) 
 \end{equation*}
 for all $e \in \cE,$ where $\star_{\Omega^1} ( \omega ) = \overline{\omega^\ast} $ as in \eqref{14thmay251},  $\star_{\cE}$ is the morphism in Definition \ref{15thjuly241} and $\overline{\nabla} ( \overline{x} ):= \overline{\nabla ( x )}  $ for all $x \in \cE.$
 \end{definition}
A bimodule connection $(\nabla, \sigma)$ with $\sigma$ an isomorphism is called $\ast$-compatible if
\begin{equation*} \label{15thmay251}
 \sigma = ( \star^{-1}_{\cE} \otimes_B \star^{-1}_{\Omega^1}  ) \Upsilon \overline{\sigma^{-1}} \Upsilon^{-1} (  \star_{\cE} \otimes_B \star_{\Omega^1}  ).   
\end{equation*}
In \cite[page 266]{BeggsMajid:Leabh}, the authors have observed that if $(\nabla, \sigma)$ is $\ast$-preserving, then it is $\ast$-compatible.


\subsection{Metrics on differential calculi}

\begin{definition} \braces{\cite{BeggsMajid:Leabh}} \label{4thmay242}
	A metric on $\form{1}$ is a pair $(g, (~ , ~)),$ where $g $ is an element of $\Omega^1 \otimes_B \Omega^1$ and $(~ , ~): \Omega^1 \otimes_B \Omega^1 \rightarrow B$ is a $B$-bilinear map such that for all $\omega \in \Omega^1,$
    the following equations hold:
	$$ ((\omega, ~) \otimes_B \id) g = \omega = (\id \otimes_B (~ , \omega)) g.  $$
\end{definition}    

If $\dc$ is a $*$-differential calculus, then we can make sense of the following definition:
\begin{definition}\label{23rddec24jb2}
	Suppose  that $\metric$  is a metric  on $\form{1}$ with $g = \sum_i \omega_i \otimes_B \eta_i. $
 Then $\metric$    is said to be a real metric if 
 $$g= \sum_i \eta^*_i \otimes_B \omega^*_i.  $$
\end{definition}

If $ (g, (~ , ~)) $ is a metric on $\Omega^1$, then  by \cite[Lemma 1.16]{BeggsMajid:Leabh}, $g$ is central, i.e, $b g = g b$ for all $b \in B$. This implies that the map $\coev_g$, defined by 
\begin{equation} \label{23rdmay241}
    \coev_g: B \rightarrow \Omega^1 \otimes_B \Omega^1 ~  \text{defined by} ~ ~ \coev_g (b) = b g 
\end{equation}
is $B$-bilinear. The following well-known characterization (see page 311 of \cite{BeggsMajid:Leabh}) of metrics will be used throughout the article on several occasions.

\begin{remark} \label{24thjuly241}
	If $ (g, (~ , ~)) $ is a  metric on $\Omega^1$ and $\coev_g$ is defined in \eqref{23rdmay241}, then $ (\Omega^1, (~ , ~), \coev_g) $ is both a left dual and a  right dual of $\Omega^1$ in the category $\catmodbb$. Thus, if $\Omega^1$ admits a metric, then it is finitely generated and projective both as  left and right $B$-modules. 
	
\end{remark}

We recall the definition of metric compatibility.
\begin{definition}\braces{\cite{BeggsMajid:Leabh}} \label{4thjuly241}
	Suppose that $(g, ( ~, ~ )) $ is a metric  on $\Omega^1$. A bimodule connection  $ (\nabla, \sigma) $ on $\form{1}$ is said to be compatible with $(g, (~,~))$ if $\nabla g = 0$, where, we denote  the induced connection on $\Omega^1 \otimes_B \Omega^1$ defined in \eqref{4thmay241} by the same notation $\nabla.$ 

    A left bimodule connection $(\nabla, \sigma) $ is said to be a Levi-Civita connection for $\metric$ if $\nabla$ is compatible with $\metric$ and torsionless in the sense of Definition \ref{9thjan252}. 
\end{definition}

\subsection{Hermitian metrics on differential calculi}

In this subsection, we will need the language of bar-categories introduced above. In particular, we recall that if $B$ is a $*$-algebra, then $\catmodbb$ is a bar category (see Example \ref{5thdec241jb}).
%

%

\begin{definition} \label{8thmay252}
 An Hermitian metric on a $B$-bimodule  $\cE$ is an isomorphism $ \kH: \overline{\cE} \to \hm{B}{\cE}{B}$  in the category $\catmodbb$ such that 
	\begin{equation}\label{15thdec246}
		\langle y,\overline{x}\rangle^*=\langle x, \overline{y}\rangle \quad \text{for all } y, x\in \cE,
	\end{equation}
	where the map 
	\begin{equation} \label{metric:Hermitian}
		\langle~,~\rangle: \cE \otimes_B \overline{\cE} \to B \quad  \text{ is defined by }  \langle x, \overline{y}\rangle:= \ev(x \otimes_B \kH(\overline{y})).
	\end{equation} 
 We say that $\kH$ is positive  if $ \langle e, \overline{e} \rangle \geq 0. $ Moreover,  $\kH$ is called positive definite if $\kH$ is positive  and  $ \langle e, \overline{e} \rangle = 0 $ if and only if $e = 0.$   
\end{definition}

Let $\nabla$ be a (left) connection on a $B$-bimodule $\mathcal{E}$ with $\nabla (e)= \sum_i \omega_i \otimes_B x_i, $ we define a right connection $\widetilde{\nabla}$ on $\overline{\mathcal{E}}$ as
\begin{equation} \label{10thjan254}
 \widetilde{\nabla} (\overline{e})= \sum_i \overline{x_i}\otimes_B \omega_i^{\ast}.
 \end{equation}

\begin{definition} \braces{\cite[Definition 8.33]{BeggsMajid:Leabh}} \label{19thoct231}
    A left connection $\nabla$ on a $B$-bimodule $\cE$ is said to be compatible with an Hermitian metric $\mathscr{H}$ on $\mathcal{E}$   if
	\begin{align*} 
		d\langle ~ , ~ \rangle = (\id \otimes_B \langle ~ , ~\rangle)(\nabla \otimes_B \id) + (\langle ~ , ~ \rangle \otimes_B \id) (\id\otimes_B \widetilde{\nabla})
	\end{align*}
	as maps from $ \mathcal{E} \otimes_B \overline{\mathcal{E}} $ to $ \Omega^1$.
	
	\end{definition}

\subsection{From compatibility with metrics to compatibility with Hermitian metrics}

\begin{prop} \braces{\cite[Proposition 4.3]{LeviCivitaHK}} \label{27thdec242jb}
	Let $\dc$ be a $\ast$-differential calculus on a  $\ast$-algebra $B$ such that $\Omega^1$ is finitely generated and projective as a left $B$-module. Given a real  metric $ ( g, ( ~, ~ )  ) $ on $\Omega^1,$ define
	$$ \kH_g: \overline{\Omega^1} \rightarrow \hm{B}{\Omega^1}{B} ~ {\rm as} ~ \kH_g ( \overline{\omega}  ) ( \eta ) = ( \eta, \omega^* ). $$
	Then  the correspondence $ ( g, ( ~ , ~ ) ) \mapsto \kH_g $ is a bijective correspondence between real  metrics and  Hermitian metrics on $\Omega^1.$
\end{prop}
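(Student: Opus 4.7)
The plan is to split the proof into three parts: (i) checking that $\kH_g$ is a well-defined isomorphism in $\catmodbb$, (ii) verifying the Hermitian condition \eqref{15thdec246} using realness of $g$, and (iii) exhibiting an inverse correspondence $\kH \mapsto (g_{\kH}, (\cdot, \cdot)_{\kH})$ producing a real metric.

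For (i), I would factor $\kH_g$ as $\kappa_g \circ \star_{\Omega^1}^{-1}$, where $\star_{\Omega^1}$ is the $B$-bimodule isomorphism from Remark \ref{12thmay252} and $\kappa_g \colon \Omega^1 \to \hm{B}{\Omega^1}{B}$ is defined by $\kappa_g(\omega) := (\cdot, \omega)$. Both maps are $B$-bimodule morphisms by direct verification, using the $B$-bilinearity and balance of $(\cdot, \cdot)$ together with the bimodule structures \eqref{28thnov231} and \eqref{19thdec234}. Writing $g = \sum_i \omega_i \otimes_B \eta_i$, the map $f \mapsto \sum_i \omega_i f(\eta_i)$ is an inverse to $\kappa_g$: both composites reduce to the identity by the metric axioms. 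Hence $\kH_g$ is an isomorphism in $\catmodbb$.

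For (ii), unwinding \eqref{15thdec246} yields the identity $(\omega, \eta)^* = (\eta^*, \omega^*)$ for all $\omega, \eta \in \Omega^1$. My approach is to define an auxiliary $B$-bimodule map $\tau \colon \Omega^1 \otimes_B \Omega^1 \to B$ by $\tau(\omega \otimes_B \eta) := (\eta^*, \omega^*)^*$, show that $(g, \tau)$ is itself a metric on $\Omega^1$, and invoke uniqueness of the evaluation compatible with a fixed coevaluation. The $B$-bilinearity and balance of $\tau$ are straightforward from $(ab)^* = b^* a^*$. For axiom~1 of $(g, \tau)$, one needs $\sum_i (\omega_i^*, \omega^*)^* \eta_i = \omega$; this follows by applying axiom~2 of $(g, (\cdot, \cdot))$ to $\omega^*$ in the realness-reversed form $g = \sum_i \eta_i^* \otimes_B \omega_i^*$, obtaining $\sum_i \eta_i^* (\omega_i^*, \omega^*) = \omega^*$, and then taking $\ast$. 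Axiom~2 is analogous. The uniqueness step is the chain
\[
(\omega, \eta) \;=\; \sum_i (\omega, \omega_i)\,\tau(\eta_i, \eta) \;=\; \sum_i \tau\bigl((\omega, \omega_i) \eta_i,\, \eta\bigr) \;=\; \tau(\omega, \eta),
\]
where the first step uses axiom~2 for $\tau$ to expand $\eta$ in the second slot of $(\cdot, \cdot)$ together with right $B$-linearity of $(\cdot, \cdot)$, the second uses left $B$-linearity of $\tau$, and the last uses axiom~1 for $(\cdot, \cdot)$. This is the principal obstacle; it hinges on the uniqueness of the evaluation combined with careful bookkeeping of realness in the noncommutative setting.

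For (iii), given an Hermitian metric $\kH$, set $\Phi := \kH \circ \star_{\Omega^1} \colon \Omega^1 \to \hm{B}{\Omega^1}{B}$, which is a $B$-bimodule isomorphism. Let $\{e_\alpha, e^\alpha\}$ be a dual basis of $\Omega^1$ as a left $B$-module, and define
\[
(\omega, \eta)_{\kH} := \Phi(\eta)(\omega) = \kH(\overline{\eta^*})(\omega), \qquad g_{\kH} := \sum_\alpha \Phi^{-1}(e^\alpha) \otimes_B e_\alpha.
\]
The two metric axioms for $(g_{\kH}, (\cdot, \cdot)_{\kH})$ follow from the dual basis identity $\omega = \sum_\alpha e^\alpha(\omega) e_\alpha$ for the first axiom and from the corresponding right dual basis identity $f = \sum_\alpha e^\alpha \cdot f(e_\alpha)$ in $\hm{B}{\Omega^1}{B}$ for the second. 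Realness of $g_{\kH}$ is obtained by showing that $g' := \sum_\alpha e_\alpha^* \otimes_B \Phi^{-1}(e^\alpha)^*$ also satisfies both axioms of a metric with $(\cdot, \cdot)_{\kH}$, using \eqref{15thdec246} to rewrite $(\omega, e_\alpha^*)_{\kH}$ and $(\Phi^{-1}(e^\alpha)^*, \omega)_{\kH}$ in terms of $\kH$, together with the identification $\kH^{-1}(e^\alpha) = \overline{\Phi^{-1}(e^\alpha)^*}$; the dual uniqueness statement (uniqueness of the coevaluation for a fixed evaluation, proved analogously to the argument in (ii)) then forces $g' = g_{\kH}$, which is exactly the realness relation. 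Mutual inverseness of the two assignments is a short check: $(\omega, \eta)_{\kH_g} = \kH_g(\overline{\eta^*})(\omega) = (\omega, \eta)$, and uniqueness of the coevaluation forces $g_{\kH_g} = g$; conversely $\kH_{g_{\kH}}(\overline{\omega})(\eta) = (\eta, \omega^*)_{\kH} = \kH(\overline{\omega})(\eta)$ gives $\kH_{g_{\kH}} = \kH$.
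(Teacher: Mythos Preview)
The paper does not supply a proof of this proposition; it merely records the statement and cites \cite[Proposition 4.3]{LeviCivitaHK}. Your argument is therefore not being compared against anything in the present paper, and on its own merits it is correct: the factorisation $\kH_g = \kappa_g \circ \star_{\Omega^1}^{-1}$ with explicit inverse $f \mapsto \sum_i \omega_i f(\eta_i)$ handles bijectivity cleanly, the uniqueness-of-evaluation trick $(\,\cdot\,,\,\cdot\,) = \tau$ from the auxiliary metric $(g,\tau)$ is a valid and elegant way to extract the Hermitian identity $(\omega,\eta)^* = (\eta^*,\omega^*)$ from realness of $g$, and the inverse construction $\kH \mapsto (g_{\kH}, (\cdot,\cdot)_{\kH})$ via a dual basis together with the dual-basis identity $f = \sum_\alpha e^\alpha \cdot f(e_\alpha)$ in $\hm{B}{\Omega^1}{B}$ is the standard way to recover a coevaluation from an isomorphism $\Omega^1 \cong \hm{B}{\Omega^1}{B}$.

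One small point worth making explicit in your write-up: in part~(iii) you should state, or at least remark, that $g_{\kH}$ is independent of the chosen dual basis $\{e_\alpha, e^\alpha\}$; this is of course immediate from the uniqueness of coevaluation which you invoke anyway, but it is good practice to flag it. Similarly, your verification that $g'$ satisfies the metric axioms relies on the identity $\sum_\alpha \Phi^{-1}(e^\alpha)\, f(e_\alpha) = \Phi^{-1}(f)$, which you established earlier; a one-line cross-reference would help the reader.
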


If $(\nabla, \sigma )$ is a bimodule connection on the space of one-forms $\Omega^1$ of a $\ast$-differential calculus $\dc,$ then we have a notion of compatibility of $\nabla$ with a real metric $\metric$ (as in Definition \ref{4thjuly241}) and another notion of compatibility with the Hermitian metric $\kH_g$ (as in Definition \ref{19thoct231}). If $\sigma$ is invertible and $(\nabla, \sigma)$ is $\ast$-preserving (see Definition \ref{15thapril252}), then these two notions coincide. This is the content  of Proposition \ref{21stapril251}, whose proof is spread over several results in \cite{BeggsMajid:Leabh}. Therefore, we include a proof for the sake of completeness. We start by introducing a monoidal category from \cite[Section 3.4.2]{BeggsMajid:Leabh}.

Let $\dc$ be a differential calculus on an algebra $B.$ Consider the category   ${}_B{\mathcal{E}\mathcal{I}}_B$  with objects $ ( \cE,  \nabla, \sigma_{\cE}  ),   $  where  $( \nabla, \sigma_{\cE} )$ is a left bimodule connection on a $B$-bimodule $\cE$ with $\sigma_{\cE}$ invertible. A morphism $\theta: ( \cE,  \nabla, \sigma_{\cE}  ) \rightarrow ( \cF,  \nabla^\prime, \sigma_{\cF}  ) $ in ${}_B{\mathcal{E}\mathcal{I}}_B$ is a $B$-bilinear map from $\cE$ to $\cF$ such that $\nabla^\prime \circ \theta = ( \text{id} \otimes_B \theta ) \nabla. $
It follows that $\cateibb$ is a monoidal category with the unit object $( B,  d, \text{id} ). $

In what follows, for a $B$-bimodule $\cE,$ the symbol $\mathrm{Hom}_B ( \cE, B) $ will stand for the $B$-bimodule of  right $B$-linear maps from $\cE$ to $B.$
We recall that if $\cE$ is finitely generated and projective as a right $B$-module, then there are elements $\{ e^i: i = 1, \cdots n \}$ in $\cE$ and $\{ e_i: i = 1, \cdots n  \}$ in $\mathrm{Hom}_B ( \cE, B) $ such that 
$$ e = \sum_i e^i e_i ( e ) ~ \text{and} ~ \phi = \sum_i \phi ( e^i ) e_i. $$
In this case, consider the maps
$$ \mathrm{ev}_L: \mathrm{Hom}_B ( \cE, B)  \otimes_B \cE \rightarrow B  ~ \text{and} ~ \mathrm{coev}_L: B \rightarrow \cE \otimes_B \mathrm{Hom}_B ( \cE, B)  $$
defined as 
$ \mathrm{ev}_L ( \phi \otimes_B e ) = \phi ( e ), ~   \mathrm{coev}_L ( b ) = b \sum_i e^i \otimes_B e_i. $ 


Beggs and Majid have proved the following result:

\begin{prop} \braces{\cite[Proposition 3.8 and Proposition 3.79]{BeggsMajid:Leabh}}      \label{21stapril253}
If $( \cE,  \nabla, \sigma_{\cE}  )$ is an object in $\cateibb$ with $\cE$ finitely generated and projective as a right $B$-module, then there exists a unique left bimodule connection  on $\mathrm{Hom}_B ( \cE, B) $ such that $\text{ev}_L$ and $\text{coev}_L $ are morphisms in $\cateibb.$
\end{prop}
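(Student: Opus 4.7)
My approach is to construct the bimodule connection $(\nabla^{*},\sigma^{*})$ on $\mathrm{Hom}_{B}(\cE,B)$ by forcing $\mathrm{ev}_{L}$ to be a morphism in $\cateibb$, and then to verify that $\mathrm{coev}_{L}$ is automatically a morphism as well. The key observations are that (i) the requirement that $\mathrm{ev}_{L}$ intertwine connections gives, via the right dual basis $\{e^{i},e_{i}\}$ of $\cE$, an explicit formula that determines $\nabla^{*}$ uniquely; and (ii) the triangle identities for the right dual $(\mathrm{Hom}_{B}(\cE,B),\mathrm{ev}_{L},\mathrm{coev}_{L})$ in $\catmodbb$, together with the tensor-product rule \eqref{4thmay241} for bimodule connections and the invertibility of $\sigma_{\cE}$, force $\mathrm{coev}_{L}$ into line for free.

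\textbf{Construction.} Write $\nabla^{*}$ and $\sigma^{*}$ as unknowns and impose
\begin{equation*}
d(\phi(e)) \;=\; (\mathrm{id}_{\Omega^{1}}\otimes_{B}\mathrm{ev}_{L})\bigl(\nabla^{*}(\phi)\otimes_{B}e \;+\; (\sigma^{*}\otimes_{B}\mathrm{id}_{\cE})(\phi\otimes_{B}\nabla(e))\bigr)
\end{equation*}
for every $\phi \in \mathrm{Hom}_{B}(\cE,B)$ and $e\in\cE$. Specialising to $e=e^{i}$, tensoring on the right by $e_{i}$ inside the $\mathrm{Hom}$-slot, and summing on $i$ isolates $\nabla^{*}(\phi)$ thanks to $\phi=\sum_{i}\phi(e^{i})\cdot e_{i}$; the ``correction'' term is re-expressed by pushing $\Omega^{1}$-factors across $\cE$-factors using $\sigma_{\cE}^{-1}$ applied to $\nabla(e^{i})$. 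A parallel manipulation applied to elements of the form $\phi b$ yields an explicit $B$-bilinear $\sigma^{*}$ determined by the defining relation
\begin{equation*}
\nabla^{*}(\phi b) - \nabla^{*}(\phi)\,b \;=\; \sigma^{*}(\phi\otimes_{B}db).
\end{equation*}

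\textbf{Well-posedness.} I then verify that the resulting formula is independent of the chosen dual basis (using that $\mathrm{coev}_{L}(1)=\sum_{i}e^{i}\otimes_{B}e_{i}\in\cE\otimes_{B}\mathrm{Hom}_{B}(\cE,B)$ is intrinsic together with $B$-bilinearity of $\sigma_{\cE}^{-1}$), is $\mathbb{C}$-linear in $\phi$, and satisfies the left Leibniz rule $\nabla^{*}(b\phi)=b\,\nabla^{*}(\phi)+db\otimes_{B}\phi$ by the Leibniz rule for $d$ combined with the bimodule-linearity of $\nabla$. An analogous argument confirms that $\sigma^{*}$ is $B$-bilinear and completes $(\nabla^{*},\sigma^{*})$ to a genuine object of $\cateibb$.

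\textbf{Coevaluation is a morphism, and uniqueness.} To see that $\mathrm{coev}_{L}$ intertwines the connections, I apply $d$ to both sides of the triangle identity $(\mathrm{ev}_{L}\otimes \mathrm{id}_{\cE})\circ(\mathrm{id}_{\cE}\otimes\mathrm{coev}_{L})=\mathrm{id}_{\cE}$ and expand through \eqref{4thmay241}; the intertwining property of $\mathrm{ev}_{L}$, combined with the possibility of moving $\Omega^{1}$-factors past $\cE$-factors via $\sigma_{\cE}^{\pm 1}$, collapses the diagram onto the required identity for $\mathrm{coev}_{L}$. For uniqueness, if $(\nabla^{*\prime},\sigma^{*\prime})$ is another solution, then $D:=\nabla^{*\prime}-\nabla^{*}$ is left $B$-linear and satisfies $(\mathrm{id}_{\Omega^{1}}\otimes_{B}\mathrm{ev}_{L})(D(\phi)\otimes_{B}e)=0$ for all $\phi,e$; pairing with the dual basis forces $D=0$, and a similar argument handles $\sigma^{*\prime}-\sigma^{*}$.

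\textbf{Main obstacle.} The most delicate step is the bookkeeping in the construction: showing that the candidate formula for $\nabla^{*}(\phi)$, which a priori depends on the dual basis, genuinely lands in $\Omega^{1}\otimes_{B}\mathrm{Hom}_{B}(\cE,B)$ and is basis-independent. This is precisely where the joint hypotheses — that $\cE$ is finitely generated projective as a right $B$-module and that $\sigma_{\cE}$ is invertible — become essential, since the $\sigma_{\cE}^{-1}$-correction terms must be well defined and must patch consistently under change of dual basis.
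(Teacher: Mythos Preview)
Your proposal is correct and follows essentially the same approach as the cited source \cite[Propositions 3.8 and 3.79]{BeggsMajid:Leabh}; the paper itself does not give an independent proof but simply invokes that reference. Two minor points worth tightening: (i) you should explicitly note that $\sigma^{*}$ is invertible (it is, via the analogous formula built from $\sigma_{\cE}$ and the dual basis), since membership in $\cateibb$ requires this; and (ii) the phrase ``apply $d$ to both sides of the triangle identity'' is imprecise---what you actually do is apply the tensor-product connection $\nabla_{\cE\otimes_{B}\mathrm{Hom}_{B}(\cE,B)}$ to $\mathrm{coev}_{L}(b)=b\sum_{i}e^{i}\otimes_{B}e_{i}$ and compare with $db\otimes_{B}\mathrm{coev}_{L}(1)$, using the already-established intertwining property of $\mathrm{ev}_{L}$ together with the triangle identities to close the computation.
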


The following Lemma is a consequence of  Proposition \ref{21stapril253}. 

\begin{lem} \label{21stapril254}
 If $\metric$ is a metric on $\Omega^1$ and $( \Omega^1, \nabla, \sigma)   $ is an object of $\cateibb,$  then the following statements are equivalent:
 \begin{enumerate}
 \item  $(\nabla, \sigma)$ is compatible with $\metric$ in the sense of Definition \ref{4thjuly241}.
 
 \item The map $\text{coev}_g $ (defined in \eqref{23rdmay241}) is a morphism in the category $ \cateibb. $
 
 \item  $ ( ~, ~ ) : \Omega^1 \otimes_B \Omega^1 \rightarrow B $ is  a morphism in $\cateibb.$ 
 \end{enumerate}
\end{lem}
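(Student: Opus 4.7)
The plan is to unpack the morphism conditions in the category $\cateibb$ for each of the maps $\coev_g$ and $(~,~)$ and verify the equivalences by direct computation, using the formula \eqref{4thmay241} for the induced tensor connection $\nabla_\otimes$ on $\Omega^1 \otimes_B \Omega^1$, the centrality of $g$ from \cite[Lemma 1.16]{BeggsMajid:Leabh}, and the snake identities recorded in Remark \ref{24thjuly241}.

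For the equivalence (1) $\Leftrightarrow$ (2), a $B$-bilinear map $\theta: B \to \Omega^1 \otimes_B \Omega^1$ is a morphism from $(B, d, \id)$ to $(\Omega^1 \otimes_B \Omega^1, \nabla_\otimes, \sigma_\otimes)$ in $\cateibb$ exactly when $\nabla_\otimes \circ \theta = (\id \otimes_B \theta) \circ d$. Taking $\theta = \coev_g$ and evaluating at $b \in B$, the right-hand side equals $db \otimes_B g$. Since $g$ is central we have $\coev_g(b) = bg$, and the Leibniz rule produces $\nabla_\otimes(bg) = db \otimes_B g + b\,\nabla_\otimes(g)$. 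Equating the two sides, the morphism condition reduces to $b\,\nabla_\otimes(g) = 0$ for every $b \in B$; specializing to $b = 1$ yields (1), and the converse is immediate.

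For (1) $\Leftrightarrow$ (3), the morphism condition for $(~,~)$ reads $d \circ (~,~) = (\id \otimes_B (~,~)) \circ \nabla_\otimes$ as maps $\Omega^1 \otimes_B \Omega^1 \to \Omega^1$. To derive (3) from (1), I would write $g = \sum_i g_i^{(1)} \otimes_B g_i^{(2)}$ and use snake to rewrite $\omega \otimes_B \eta = \sum_i (\omega, g_i^{(1)}) \cdot (g_i^{(2)} \otimes_B \eta)$. Applying $\nabla_\otimes$ via the Leibniz rule and then $(\id \otimes_B (~,~))$, the piece involving $\sum_i d(\omega, g_i^{(1)}) (g_i^{(2)}, \eta)$ reorganizes using the Leibniz rule for $d$ on $B$ together with the factorization $(\omega, \eta) = \sum_i (\omega, g_i^{(1)}) (g_i^{(2)}, \eta)$, while the remaining contribution cancels by $\nabla_\otimes(g) = 0$, leaving $d(\omega, \eta)$. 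The converse (3) $\Rightarrow$ (1) follows by applying the morphism identity for $(~,~)$ to tensors of the form $\omega \otimes_B g_i^{(2)}$ for each $i$, summing after multiplication by $(\omega, g_i^{(1)})$, and using the snake identities to isolate $\nabla_\otimes(g)$.

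The main technical difficulty lies in the bookkeeping: the scalars $(\omega, g_i^{(1)}) \in B$ enter as left actions on tensors while $(g_i^{(2)}, \eta) \in B$ enter as right actions, and the $\sigma$-term in the formula for $\nabla_\otimes$ shuffles these actions. A conceptually cleaner route that sidesteps this shuffling is to invoke Proposition \ref{21stapril253} for $(\Omega^1, \nabla, \sigma) \in \cateibb$: the metric-induced $B$-bimodule isomorphism $\Omega^1 \to \Hom_B(\Omega^1, B)$ defined by $\omega \mapsto (\omega, -)$ transports the canonical dual pair $(\ev_L, \coev_L)$ to $((~,~), \coev_g)$, and the uniqueness statement therein forces each of (2) and (3) to be equivalent to $\nabla$ being the transported distinguished connection, hence equivalent to each other and to (1).
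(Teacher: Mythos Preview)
Your approach is essentially the paper's. The equivalence (1)$\Leftrightarrow$(2) is argued identically. For the remaining equivalence the paper also routes through Proposition~\ref{21stapril253}, but it makes the step concrete rather than speaking of a ``transported distinguished connection'': it exhibits the unique comparison isomorphism
\[
T = (\ev_L \otimes_B \id)(\id \otimes_B \coev_g) : \Hom_B(\Omega^1, B) \to \Omega^1
\]
between the two left-dual structures on $\Omega^1$ (via \cite[Proposition 2.10.5]{etingof2015tensor}), together with the companion identities $\coev_g = (\id \otimes_B T)\,\coev_L$, $(~,~) = \ev_L \circ (T^{-1} \otimes_B \id)$, and $T^{-1} = ((~,~) \otimes_B \id)(\id \otimes_B \coev_L)$. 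Since $\ev_L$ and $\coev_L$ are already morphisms in $\cateibb$, these four formulas show that $T$ is a morphism iff $\coev_g$ is, and $T^{-1}$ is a morphism iff $(~,~)$ is; hence (2)$\Leftrightarrow$(3). Your last sentence conflates this with a condition on $\nabla$ itself and hints at an independent route to (1), which the argument does not actually provide; the link to (1) still runs through the already-established (1)$\Leftrightarrow$(2). The direct-computation sketch you give first for (1)$\Leftrightarrow$(3) is not pursued in the paper and, as you note, the $\sigma$-bookkeeping makes it the less attractive option.
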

\begin{proof} Note that for $b \in B, \nabla_{\Omega^1 \otimes_B \Omega^1} ( \text{coev}_g ( b ) ) = \nabla_{\Omega^1 \otimes_B \Omega^1} ( b g ) = b \nabla_{\Omega^1 \otimes_B \Omega^1} g + db \otimes_B g. $ 
Hence,
 $$   \nabla_{\Omega^1 \otimes_B \Omega^1} ( \text{coev}_g ( b ) ) - db \otimes_{B} g = b \nabla_{\Omega^1 \otimes_B \Omega^1} g.
$$
 So, $\nabla$ is metric compatible if and only if $\coev_g$ is a morphism. This proves the equivalence of (1) and (2). 
 
For proving the equivalence of (2) and (3), we make use of Proposition \ref{21stapril253}. 
We  observe that since  $\Omega^1$ admits a metric $\metric,$   Remark \ref{24thjuly241} implies that   $( \Omega^1, ( ~, ~ ), \coev_g )$ is a left dual of $\Omega^1$ in the monoidal category $\catmodbb$ and  moreover, $\form{1}$ is  finitely generated and  projective as a  right $B$-module.
Thus, Proposition \ref{21stapril253} applied to the latter statement shows that 
 $( \mathrm{Hom}_B ( \Omega^1, B), \ev_L, \coev_L )$ is another left dual of $\form{1}$ in $\catmodbb.$ 
 
Therefore, by \cite[Proposition 2.10.5]{etingof2015tensor}, the map
\begin{equation} \label{23rdapril251}
 T:= ( \text{ev}_L \otimes_B \text{id} ) ( \text{id} \otimes_B \text{coev}_g  ) : \mathrm{Hom}_B ( \Omega^1, B) \rightarrow \Omega^1 
 \end{equation}
is the unique isomorphism in $\catmodbb$  such that
\begin{equation*} 
 \text{ev}_L = ( ~, ~ ) \circ ( T \otimes_B \text{id} ) ~ \text{and} ~ ( \text{id} \otimes_B T ) \text{coev}_L = \text{coev}_g. 
 \end{equation*}
Moreover,
\begin{equation} \label{29thjuly251}
T^{-1} = ( ( ~, ~ ) \otimes_B \text{id}  ) ( \text{id} \otimes_B \text{coev}_L ). 
\end{equation}

Now let us assume that   $\text{coev}_g$ is a morphism in $\cateibb.$ From Proposition \ref{21stapril253}, we know that $\text{ev}_L$ is a morphism in $\cateibb$ and so \eqref{23rdapril251} shows that $T$ (and hence $T^{-1}$) are morphisms in the same category. Since  $ ( ~, ~ ) = \text{ev}_L \circ ( T^{-1} \otimes_B \text{id}  ), $ we conclude that 
$( ~, ~ )$ is  a morphism in $\cateibb$. 

Conversely, if  $( ~, ~ )$ is a morphism in $\cateibb$, then from \eqref{29thjuly251}, we infer that  $T^{-1}$ (and hence $T$) are morphisms in $\cateibb.$ Thus $\coev_g = ( \text{id} \otimes_B T ) \text{coev}_L $ is a morphism in $\cateibb.$
This proves the equivalence of $(2)$ and $(3)$.
\end{proof}

\begin{prop} \braces{\cite{BeggsMajid:Leabh}} \label{21stapril251}
Let $\dc$ be a $*$-differential calculus on  a $\ast$-algebra $B.$ If  $( \form{1}, \nabla, \sigma )$ is an object of $\cateibb$ such that $(\nabla, \sigma)$ is   $\ast$-preserving, then the following statements are equivalent:
\begin{enumerate}
\item[(i)] $\nabla$ is compatible with a real metric $\metric$ in the sense of Definition \ref{4thjuly241}.

\item[(ii)] $\nabla$ is compatible with the Hermitian metric $\kH_g$ in the sense of Definition \ref{19thoct231}.
\end{enumerate}
\end{prop}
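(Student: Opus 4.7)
My plan is to factor the equivalence through Lemma~\ref{21stapril254}, which recasts metric compatibility as the single statement that $(~,~): \Omega^1 \otimes_B \Omega^1 \to B$ is a morphism in the category $\cateibb.$ Proposition~\ref{27thdec242jb} guarantees that $(g,(~,~))\mapsto \kH_g$ is a bijection from real metrics onto Hermitian metrics on $\Omega^1$, so the two compatibility notions can be compared for a fixed pair.

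The first step is to unfold both conditions into explicit formulas. Using $\langle x, \overline{y}\rangle = \ev(x\otimes_B \kH_g(\overline{y})) = (x, y^*)$ together with $\widetilde{\nabla}(\overline{y}) = \sum_j \overline{y_j}\otimes_B \eta_j^*$ when $\nabla(y) = \sum_j \eta_j \otimes_B y_j$, Definition~\ref{19thoct231} becomes
\[
d(x, y^*) = \sum_i \omega_i (x_i, y^*) + \sum_j (x, y_j^*)\, \eta_j^*, \qquad \nabla(x) = \sum_i \omega_i \otimes_B x_i,
\]
while Lemma~\ref{21stapril254} combined with the tensor-product formula \eqref{4thmay241} rephrases metric compatibility as
\[
d(x, z) = \sum_i \omega_i (x_i, z) + (\id \otimes_B (~,~))(\sigma \otimes_B \id)(x \otimes_B \nabla(z)).
\]
The bridge between these is the $*$-preserving hypothesis of Definition~\ref{15thapril252}, which, applied to the star object $\Omega^1$ with $\star_{\Omega^1}(\omega) = \overline{\omega^*}$, unwinds to the identity $\nabla(y^*) = \sum_j \sigma(y_j^* \otimes_B \eta_j^*).$ Substituting $z = y^*$ in the metric compatibility equation and invoking this relation rewrites the cross term through $(\id \otimes_B (~,~))(\sigma\otimes_B \id)(\id \otimes_B \sigma)$, and the categorical naturality identity $((~,~) \otimes_B \id) = (\id \otimes_B (~,~))(\sigma \otimes_B \id)(\id \otimes_B \sigma)$, itself a consequence of $(~,~)$ being a morphism in $\cateibb$ and the fact that the connection on the unit object $B$ is $(d,\id)$, collapses this expression to exactly $\sum_j (x, y_j^*)\eta_j^*$. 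This proves (i)$\Rightarrow$(ii).

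The main obstacle will be the converse. Here I must reverse the chain of identities: Hermitian compatibility gives the desired equality only on tensors of the form $\sum_j x\otimes_B y_j^* \otimes_B \eta_j^*$ arising from $\nabla(y)$, and the argument must promote this restricted identity to the full naturality condition on $\Omega^1\otimes_B\Omega^1\otimes_B\Omega^1$ required by Lemma~\ref{21stapril254}. I expect that the invertibility of $\sigma$ together with the Leibniz rule for $\nabla$ and the $*$-preserving relation will suffice: given an arbitrary element of $\Omega^1 \otimes_B \Omega^1$, applying $\sigma^{-1}$ and using that $\nabla(fy^*) = df\otimes_B y^* + f\nabla(y^*)$ lets one realize any two-tensor from elements of the form $\sum_j y_j^*\otimes_B \eta_j^*$ together with the $B$-bimodule action. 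The left $B$-linearity of both $(~,~)$ and $\sigma$ then propagates the Hermitian identity back to the full naturality identity, yielding metric compatibility via Lemma~\ref{21stapril254}.
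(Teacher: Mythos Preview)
Your forward direction (i)$\Rightarrow$(ii) is correct and matches what the paper does, though the paper packages the computation as a citation to \cite[Proposition 8.40]{BeggsMajid:Leabh} after invoking Lemma~\ref{21stapril254}. Your claim that the identity $((~,~)\otimes_B \id) = (\id \otimes_B (~,~))(\sigma\otimes_B\id)(\id\otimes_B\sigma)$ follows from $(~,~)$ being a morphism in $\cateibb$ is right: any morphism $\theta$ in $\cateibb$ automatically intertwines the associated $\sigma$-maps (apply the intertwining relation to $\theta(eb)$ and use the Leibniz rules on both sides), and for $\theta=(~,~)$ the target object is $(B,d,\id)$, giving exactly this equation.

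The converse (ii)$\Rightarrow$(i) has a genuine gap. You frame the task as ``promoting a restricted naturality identity to the full one on $\Omega^1\otimes_B\Omega^1\otimes_B\Omega^1$,'' but this mis-states the target. Lemma~\ref{21stapril254} asks only for the morphism condition \eqref{16thapril251}, which after matching first terms with Hermitian compatibility and using the $\ast$-preserving relation becomes
\[
((~,~)\otimes_B \id)\bigl(x\otimes_B \sigma^{-1}\nabla(z)\bigr)
=(\id\otimes_B (~,~))(\sigma\otimes_B\id)\bigl(x\otimes_B \nabla(z)\bigr)
\quad\text{for all }x,z\in\Omega^1.
\]
This is not a special case of the naturality identity restricted to certain tensors; both sides already involve $\nabla(z)$, and the equation is exactly what you must prove, not a partial instance you can extend. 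Your proposed spanning argument---write arbitrary two-tensors via $\sigma^{-1}\nabla(zb)-\sigma^{-1}\nabla(z)b=z\otimes_B db$---does show that $\{\sigma^{-1}\nabla(z)\}$ together with the right $B$-action generate $\Omega^1\otimes_B\Omega^1$, but you have no identity on that span to propagate: Hermitian compatibility gives you a formula for $d(x,z)$, not a relation of the form $T(x\otimes_B\sigma^{-1}\nabla z)=0$ for some $B$-bilinear $T$. The paper does not attempt this calculation at all; it simply invokes \cite[Proposition 8.40]{BeggsMajid:Leabh} a second time for the reverse implication, whose proof in the book proceeds through the connection on $\overline{\Omega^1}$ and uses $\ast$-compatibility of $\sigma$ rather than a spanning trick.
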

\begin{proof}
Let us assume that $\nabla_{\Omega^1 \otimes_B \Omega^1} g = 0. $ Then, by Lemma \ref{21stapril254}, $( ~, ~ )$ is a morphism in $\cateibb,$  i.e., for all $\omega, \eta \in \Omega^1,  d ( \omega, \eta ) = ( \text{id} \otimes_B ( ~, ~ ) ) \nabla_{\Omega^1 \otimes_B \Omega^1} ( \omega \otimes_B \eta ).  $
Therefore, we have
\begin{equation} \label{16thapril251}
d ( \omega, \eta ) = ( \text{id} \otimes_B ( ~, ~ ) ) ( \nabla \omega \otimes_B \eta + ( \sigma \otimes_B \text{id} ) ( \omega \otimes_B \nabla \eta )  ).
\end{equation}
Since $\nabla$ is 
$\ast$-preserving, \cite[Proposition 8.40]{BeggsMajid:Leabh} implies that $\nabla$ is compatible with $\kH_g.$ 

Conversely, let us assume that $\nabla$ is compatible with $\kH_g.$ Since $\nabla$ is $\ast$-preserving, \cite[Proposition 8.40]{BeggsMajid:Leabh} once again implies that \eqref{16thapril251} holds, i.e, $( ~, ~ )$ is a morphism in $\cateibb.$ Then by Lemma \ref{21stapril254}, it follows that $\nabla_{\Omega^1 \otimes_B \Omega^1} g  = 0. $
\end{proof}

\section{Compatibility of connections with Hermitian metrics on weak \texorpdfstring{$\dagger$}{}-bimodules} \label{21staugust251}

In the previous section,  we explained the approach to define  metric compatibility of a bimodule connection in the sense of \cite{BeggsMajid:Leabh}. On the other hand, if $B$ is a $\ast$-algebra, the authors of \cite{meslandrennie1} study compatibility of connections (not necessarily bimodule connections) on $\dagger$-bimodules (see Definition \ref{9thmay251}),  examples of which include a class of first order $\ast$-differential calculi equipped with a positive definite sesquilinear form.

In this section, we compare the above two definitions and our main result is Theorem \ref{9thmay252} that works for a class of sesquilinear forms defined on the balanced tensor product $\Omega^1 \otimes_B \Omega^1.$ However, unlike \cite{meslandrennie1}, we do not impose any positivity condition on the sesquilinear form and in fact work with a larger class of weak $\dagger$-bimodules instead of the $\dagger$-bimodules of \cite{meslandrennie1}.

Throughout this section,  $B$ is a $*$-algebra and $(\form1, d)$ is a first order $*$-differential calculus.  The authors of \cite{meslandrennie1} use a closely related notion. We make a note of their difference in the following remark.

\begin{remark}
The authors of \cite{meslandrennie1} use the term first order differential structure to refer to a notion which is almost the same as a first order $\ast$-differential calculus. A crucial difference is that they postulate the condition $d ( b^* ) = - d ( b )^{\ast}. $  Moreover, they also assume $B$ to be a local algebra in their definition. We refer to \cite[Definition 2.3]{meslandrennie1} and the discussion following it for the details. 
\end{remark}

However, the two concepts can be reconciled by the following simple observation:

\begin{prop}\label{prop:oldnewdc}
    Let $B$ be a $*$-algebra and $\cE \in \catmodbb$. Then the following are equivalent:
    \begin{enumerate}
        \item [(i)] $(\form{1},d)$ is a first order $*$-differential calculus and $\nabla$ is a connection on $\cE$ for $(\form{1},d).$
        \item [(ii)] Define $d^\prime= \ci d: B\to \form{1},$ then $(\form{1}, d^\prime)$ is a differential calculus satisfying $d^\prime(b^*) = -(d^\prime b)^*$. Moreover $\ci \nabla$ is a connection on $\cE$ for    $(\form{1},d').$
    \end{enumerate}
\end{prop}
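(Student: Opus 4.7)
The statement is essentially a bookkeeping observation: the factor of $\ci$ rescales the derivation and reconciles the sign conventions $d(b^*) = (db)^*$ (Beggs--Majid) with $d'(b^*) = -(d'b)^*$ (Mesland--Rennie). My plan is therefore to verify the three pieces that make up a first-order differential calculus with connection — the derivation property, the spanning/surjectivity condition for $\Omega^1$, the $\ast$-compatibility, and the Leibniz rule for $\nabla$ — and check each one is preserved under the replacements $d \leftrightarrow -\ci\, d'$ and $\nabla \leftrightarrow -\ci\, \nabla'$.

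First I would observe that since multiplication by $\ci$ is $\mathbb{C}$-linear, $d' := \ci d$ is a derivation from $B$ to $\Omega^1$ if and only if $d$ is; indeed
\[
d'(bc) = \ci\, d(bc) = \ci(b\, dc + db\, c) = b\, d'c + d'b\, c.
\]
Moreover, $\operatorname{span}\{b\, d'c : b,c \in B\} = \operatorname{span}\{b\, dc : b,c \in B\}$, so $(\Omega^1, d')$ satisfies the surjectivity requirement of Definition \ref{5thmay251} exactly when $(\Omega^1, d)$ does.

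Next I would check the $\ast$-compatibility. Assuming $d(b^*) = (db)^*$, one computes
\[
d'(b^*) = \ci\, d(b^*) = \ci\, (db)^* = -\overline{(-\ci)}\,(db)^* = -(\ci\, db)^* = -(d'b)^*,
\]
using that the involution on $\Omega^1$ is antilinear. Conversely, if $d'(b^*) = -(d'b)^*$, then $d = -\ci\, d'$ satisfies
\[
d(b^*) = -\ci\, d'(b^*) = \ci\, (d'b)^* = (-\ci\, d'b)^* = (db)^*.
\]
So the two $\ast$-compatibility conditions are interchanged precisely by the rescaling $d \mapsto \ci\, d$.

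Finally, for the connection I would verify directly that if $\nabla(be) = b\,\nabla(e) + db\otimes_B e$ for all $b \in B$ and $e \in \cE$, then
\[
(\ci\nabla)(be) = \ci\, b\,\nabla(e) + \ci\, db\otimes_B e = b\,(\ci\nabla)(e) + d'b\otimes_B e,
\]
so $\ci\nabla$ is a connection on $\cE$ for the calculus $(\Omega^1, d')$, and $\mathbb{C}$-linearity is immediate. The reverse implication follows by applying the same argument to $-\ci\,\nabla'$ with $d = -\ci\, d'$. Since no step requires anything beyond the $\mathbb{C}$-linearity of $\nabla$ and antilinearity of the involution, there is no genuine obstacle here — the proof amounts to transporting each defining condition through the scalar factor $\ci$.
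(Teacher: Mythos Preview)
Your proof is correct and is exactly the kind of direct verification the paper has in mind; in fact the paper states this proposition without proof as a ``simple observation,'' so your elementary check of the derivation, spanning, $\ast$-compatibility, and Leibniz conditions is the expected argument. One minor slip: in the displayed line for $d'(b^*)$ the intermediate expression $-\overline{(-\ci)}\,(db)^*$ equals $-\ci\,(db)^*$, not $\ci\,(db)^*$; you presumably meant $-\overline{\ci}\,(db)^*$, and with that correction the chain of equalities is valid.
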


Much of the theory developed in \cite{meslandrennie1} relies on the language of $\dagger$-structures on bimodules over $\ast$-algebras. 

\begin{definition}\label{def:daggerbimod}
 A $B$-bimodule $M$  over a $\ast$-algebra $B$ is said to have a $\dagger$-structure if there exists an antilinear involution
 \begin{equation} \label{10thmay252}
  \dagger: M \rightarrow M ~ \text{such that} ~ \dagger ( b m c ) = c^* \dagger ( m ) b^*.   
 \end{equation}
\end{definition}
If $\dagger_{\cE}$ and $\dagger_{\cF}$ are $\dagger$-structures on bimodules $\cE$ and $\cF$ respectively, then  Mesland and Rennie consider the map
\begin{equation} \label{13thmay251}
 \dagger_{\cE \otimes_B \cF}: \cE \otimes_B \cF \rightarrow \cF \otimes_B \cE  ~ \text{as} ~ \dagger_{\cE \otimes_B \cF} ( e \otimes_B f ) = \dagger_{\cF} ( f ) \otimes_B \dagger_{\cE} ( e ). 
\end{equation}
We observe that the notion of a $\dagger$-structure is intimately related with that of a star object in the bar category $\catmodbb$ (see Definition \ref{15thjuly241}).
Indeed, given a $\dagger$-structure on $M$,  we define a morphism
$ \star_{M, \dagger}: M \rightarrow \overline{M} ~ \text{by} ~ \star_{M, \dagger} ( m ) = \overline{\dagger (m) }. $
Conversely, if $( M, \star_M )$ is a star object in $\catmodbb,$ we define the antilinear map $\dagger_{\star_M}: M \rightarrow M ~ \text{by} ~$
\begin{equation} \label{14thmay256}
  \dagger_{\star_M} ( m ) = \text{bb}^{-1}_M  \overline{\star_M} ( \overline{m} ).
\end{equation}

\begin{prop}
 Let $M$ be an object in $\catmodbb.$ The correspondences $\dagger \mapsto \star_{M, \dagger}$ and $\star_M \mapsto \dagger_{\star_M} $ define a bijection between $\dagger$-structures and star object structures on the bimodule $M.$   
\end{prop}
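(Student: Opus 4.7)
The plan is to verify that each correspondence lands in the intended type of structure and that the two are two-sided inverses of each other. The unifying observation is that the recipe for $\dagger_{\star_M}$ can be rewritten as the symmetric relation
\begin{equation*}
\star_M(m) = \overline{\dagger_{\star_M}(m)}, \qquad m \in M,
\end{equation*}
since if one writes $\star_M(m) = \overline{n}$ then $\overline{\star_M}(\overline{m}) = \overline{\star_M(m)} = \overline{\overline{n}} = \mathrm{bb}_M(n)$, so that $\mathrm{bb}_M^{-1}\overline{\star_M}(\overline{m}) = n$. Once this reformulation is in hand the set-theoretic bijection is immediate: $\dagger_{\star_{M,\dagger}}(m) = \mathrm{bb}_M^{-1}(\overline{\overline{\dagger(m)}}) = \dagger(m)$, and $\star_{M,\dagger_{\star_M}}(m) = \overline{\dagger_{\star_M}(m)} = \star_M(m)$.

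Next I would verify that $\star_{M,\dagger}$ is a star object whenever $\dagger$ is a $\dagger$-structure. Left and right $B$-linearity are direct consequences of the twist $\dagger(bmc) = c^*\dagger(m)b^*$ together with the conjugate bimodule rule \eqref{28thnov231}, which gives $\overline{c^*\dagger(m)b^*} = b \cdot \overline{\dagger(m)} \cdot c$. The star-object axiom $\overline{\star_{M,\dagger}} \circ \star_{M,\dagger} = \mathrm{bb}_M$ unfolds to $\overline{\overline{\dagger^2(m)}} = \overline{\overline{m}}$, which holds because $\dagger^2 = \mathrm{id}$. Conversely, starting from a star object $\star_M$, I would check that $\dagger_{\star_M}$ is $\mathbb{C}$-antilinear, satisfies the twist, and is involutive. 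Antilinearity follows from $\overline{\lambda m} = \bar{\lambda} \cdot \overline{m}$ in the conjugate vector space $\overline{M}$ combined with the $\mathbb{C}$-linearity of $\overline{\star_M}$ and $\mathrm{bb}_M^{-1}$ as morphisms in the $\mathbb{C}$-linear category $\catmodbb$. The twist identity $\dagger_{\star_M}(bmc) = c^*\dagger_{\star_M}(m)b^*$ follows from the $B$-bilinearity of $\overline{\star_M}$ and of $\mathrm{bb}_M^{-1}$ together with $\overline{bmc} = c^*\overline{m}b^*$. Finally, applying $\overline{\star_M}$ to the symmetric relation displayed above and using $\overline{\overline{x}} = \mathrm{bb}_M(x)$ gives $\overline{\star_M} \circ \star_M = \mathrm{bb}_M \circ \dagger_{\star_M}^2$, so the star-object axiom forces $\dagger_{\star_M}^2 = \mathrm{id}$ since $\mathrm{bb}_M$ is invertible.

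The proof is essentially an unraveling of definitions, and there is no genuine mathematical obstacle. The only point that requires care is the bookkeeping of bars and conjugate bimodule actions: one must remember that $\overline{\star_M}$ is $\mathbb{C}$-linear (and $B$-bilinear), not antilinear, as a morphism in $\catmodbb$, because the antilinearity inherent in the $\dagger$-structure has been absorbed in the passage from $M$ to $\overline{M}$.
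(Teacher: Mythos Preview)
Your proof is correct and follows essentially the same approach as the paper's own argument: both are direct unravelings of the definitions, checking bilinearity and the involution/star-object axioms in turn and then verifying the two correspondences are mutual inverses. Your use of the symmetric relation $\star_M(m) = \overline{\dagger_{\star_M}(m)}$ is a tidy organizational device that slightly streamlines the bijection and involutivity checks compared to the paper's explicit computations, but the underlying content is the same.
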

 \begin{proof}
 If $\dagger$ is a $\dagger$-structure, then it can be easily checked by using \eqref{10thmay252} and \eqref{28thnov231} that $\star_{M, \dagger}$ is a morphism in $\catmodbb.$ The condition \eqref{10thmay253} is satisfied as $\dagger$ is an involution. 

Conversely, if $(M, \star_M)$ is a star object, then $\dagger_{\star_M}$ satisfies \eqref{10thmay252} as $\star_M$ is a morphism in $\catmodbb.$
The fact that $\dagger_{\star_M}$ is an involution follows from the following computation:
$$( \dagger_{\star_M} )^2 ( m ) = \text{bb}^{-1}_M \overline{\star_M} (  \overline{\text{bb}^{-1}_M \overline{\star_M} ( \overline{m} ) }   ) = \text{bb}^{-1}_M (  \overline{ \star_M \text{bb}^{-1}_M \overline{\star_M} ( \overline{m} ) }   ) = \text{bb}^{-1}_M ( \overline{\overline{m}} )
~ \text{(by \eqref{10thmay253})}. $$
But this equals $m$ as $\text{bb}_M ( m ) = \overline{\overline{m}} $     
by Example \ref{5thdec241jb}. 
Finally, $ \dagger_{\star_{M, \dagger}} ( m ) = \text{bb}^{-1}_M ( \overline{\star_{M, \dagger} ( m ) } ) = \text{bb}^{-1}_M ( \overline{\overline{\dagger ( m ) }} ) = \dagger ( m ) $
and if $\star_M ( m ) = \overline{n},$ then 
$ \star_{M, \dagger_{\star_M}} ( m ) = \overline{\dagger_{\star_M} ( m ) } = \overline{\text{bb}^{-1}_M \overline{\star_M} ( \overline{m} ) } = \overline{ \text{bb}^{-1}_M ( \overline{\overline{n}} ) } = \overline{n} = \star_M ( m ).  $
Therefore, the two correspondences in the statement of the proposition are inverses of one another. This completes the proof.  
 \end{proof}

Let us note that if $(\Omega^1, d)$ is a first order $\ast$-differential calculus and $\star_{\Omega^1}$ is as defined in \eqref{14thmay251}, then 
\begin{eqnarray} \label{18thjune251}
    \dagger_{\star_{\Omega^1}} ( \omega ) = \omega^\ast.
\end{eqnarray}

\subsection{The class of  weak \texorpdfstring{$\dagger$}{}-bimodules}
 A left $B$-valued sesquilinear form   on a $B$-bimodule $\mathcal{E}$ is a map
$$ {}_B \langle \langle ~ , ~ \rangle \rangle: \mathcal{E} \times \mathcal{E} \rightarrow B $$
which is left $B$-linear, right $\mathbb{C}$-antilinear, and satisfies ${}_B \langle \langle e , f \rangle \rangle = {{}_B \langle \langle f , e \rangle \rangle}^\ast $ for all $e, f \in \mathcal{E}.$
A $B$-valued sesquilinear form on $\mathcal{E}$ is called a pre-inner product if
 ${}_B \langle \langle e , e \rangle \rangle \geq 0.$ A pre inner product is called an inner product if the condition 
 ${}_B \langle \langle e , e \rangle \rangle = 0 $ holds if and only if $e = 0.$ 
In the sequel, we will use the term sesquilinear form on $\mathcal{E}$ to denote a left $B$-valued sesquilinear form on $\mathcal{E}.$
Observe that if $  {}_B \langle \langle ~ , ~ \rangle \rangle    $ is a sesquilinear form on $\mathcal{E},$  then for all $e, f \in \mathcal{E}$ and $b \in B,$ we have
\begin{equation} \label{10thjan251}
 {}_B \langle \langle e , bf \rangle \rangle = {}_B \langle \langle e , f \rangle \rangle b^*.   
\end{equation}

\begin{definition} \braces{\cite[Definition 2.8]{meslandrennie1}} \label{9thmay251}
A weak $\dagger$-bimodule over $B$ is a triplet $(X, \dagger, {}_B\langle \langle ~ , ~ \rangle \rangle)$ where $X$ is a  $B$-bimodule which is finitely generated and projective as a left $B$-module, $\dagger$ is a $\dagger$-structure on $X$ as in Definition \ref{def:daggerbimod} and  $ {}_B\langle \langle ~ , ~ \rangle \rangle: X \times X \rightarrow B$ is a sesquilinear form such that
\begin{equation} \label{3rdjune252}
 {}_B\langle \langle x b , y \rangle \rangle = {}_B\langle \langle x , y b^* \rangle \rangle  
\end{equation}
for all $x, y \in X, b \in B.$ If ${}_B\langle \langle ~ , ~ \rangle \rangle$ is an inner product, then we say that $X$ is a $\dagger$-bimodule.
\end{definition}

 The class of weak $\dagger$-bimodules which will be of interest to us arise from Hermitian metrics (as in Definition \ref{8thmay252}) on  first order $\ast$-differential calculi. We make the following observation:

\begin{prop} \label{10thjan2511}
Let  $\kH$ be an Hermitian metric on a $B$-bimodule $\mathcal{E}.$  Then, 
\begin{equation} \label{10thjan259} 
 {}_B \langle \langle ~ , ~ \rangle \rangle: \mathcal{E} \times \mathcal{E} \rightarrow B ~ {\rm defined} ~ {\rm by} ~ {}_B \langle \langle e , f \rangle \rangle = \kH ( \overline{f} ) ( e ) 
 \end{equation}
is a sesquilinear form on $\mathcal{E}$ satisfying \eqref{3rdjune252}. 

If $B$ is a $\ast$-subalgebra of a $C^*$-algebra  and $\kH$ is a positive (resp, positive definite) Hermitian metric, then ${}_B \langle \langle ~ , ~ \rangle \rangle$ is a pre-inner product (resp, inner product) on $\mathcal{E}.$  
\end{prop}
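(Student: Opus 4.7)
The plan is to unwind the definition of ${}_B\langle\langle~,~\rangle\rangle$ and match each required property of a sesquilinear form against one of the structural features of the Hermitian metric $\kH$. Observe first that, in the notation of \eqref{metric:Hermitian}, one has
\[
{}_B\langle\langle e, f\rangle\rangle = \kH(\overline{f})(e) = \ev(e\otimes_B\kH(\overline{f})) = \langle e, \overline{f}\rangle,
\]
so the candidate form is literally the evaluation pairing of Definition \ref{8thmay252}, just rewritten as a map $\mathcal{E}\times\mathcal{E}\to B$. This reformulation immediately yields the three basic algebraic properties: left $B$-linearity in the first slot comes from the fact that $\kH(\overline{f})$ lies in $\hm{B}{\mathcal{E}}{B}$; conjugate linearity in the second slot comes from the bar functor sending $\lambda f$ to $\overline{\lambda}\,\overline{f}$ combined with $\mathbb{C}$-linearity of $\kH$; and the Hermitian symmetry
${}_B\langle\langle e,f\rangle\rangle = {}_B\langle\langle f,e\rangle\rangle^{\ast}$ is exactly equation \eqref{15thdec246} rewritten.

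For \eqref{3rdjune252}, the plan is to use that $\kH$ is a morphism in $\catmodbb$, so in particular left $B$-linear with respect to the bar bimodule structure \eqref{28thnov231}. Given $x,y\in\mathcal{E}$ and $b\in B$, compute
\[
{}_B\langle\langle x, yb^{\ast}\rangle\rangle = \kH(\overline{yb^{\ast}})(x) = \kH(b\cdot\overline{y})(x) = (b\cdot\kH(\overline{y}))(x) = \kH(\overline{y})(xb) = {}_B\langle\langle xb, y\rangle\rangle,
\]
where the second equality uses \eqref{28thnov231}, the third uses left $B$-linearity of $\kH$, and the fourth uses the left action formula $(b\cdot\phi)(x)=\phi(xb)$ from \eqref{19thdec234}.

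For the final assertion, suppose $B$ is a $\ast$-subalgebra of a $C^{\ast}$-algebra. By definition, $\kH$ being positive means $\langle e,\overline{e}\rangle\geq 0$ for all $e\in\mathcal{E}$, and this is precisely ${}_B\langle\langle e,e\rangle\rangle\geq 0$; combined with the already established sesquilinearity and symmetry, this gives a pre-inner product. If $\kH$ is positive definite, the additional implication $\langle e,\overline{e}\rangle=0\Rightarrow e=0$ upgrades ${}_B\langle\langle~,~\rangle\rangle$ to an inner product. No step requires serious work; the main thing to be careful about is keeping track of the two conventions \eqref{19thdec234} and \eqref{28thnov231} when matching left and right actions, which is the only place a sign- or side-mismatch could creep in.
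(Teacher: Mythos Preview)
Your proof is correct and follows essentially the same approach as the paper: both identify ${}_B\langle\langle e,f\rangle\rangle$ with $\langle e,\overline{f}\rangle$ and read off the sesquilinear-form axioms and \eqref{3rdjune252} from the structural properties of $\kH$ (the paper phrases \eqref{3rdjune252} as ``$\langle~,~\rangle$ is defined over $\cE\otimes_B\overline{\cE}$'', which is the same computation you did via left $B$-linearity of $\kH$). For the positive definite case your argument is actually more direct than the paper's: since ${}_B\langle\langle e,e\rangle\rangle = \langle e,\overline{e}\rangle$ literally, positive definiteness of $\kH$ gives the inner-product condition immediately, whereas the paper instead shows that ${}_B\langle\langle e,f\rangle\rangle=0$ for all $f$ forces $e=0$ (using injectivity of $\kH$) and then invokes a Cauchy--Schwarz argument from Hilbert module theory to pass to ${}_B\langle\langle e,e\rangle\rangle=0\Rightarrow e=0$.
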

\begin{proof}
If $\langle ~, ~ \rangle $ denotes the map defined in \eqref{metric:Hermitian}, then
\begin{equation} \label{10thjan256}
{}_B \langle \langle e , f \rangle \rangle = \langle e, \overline{f} \rangle.
\end{equation} Thus $\langle\langle~,~\rangle\rangle$ is left $B$-linear.
 Next, by using \eqref{15thdec246}, we get  
$
 {}_B \langle \langle e , f \rangle \rangle = \langle e, \overline{f} \rangle = (  \langle f, \overline{e} \rangle  )^\ast = {}_B \langle \langle f , e \rangle \rangle^\ast,
$
 and hence ${}_B \langle \langle ~ , ~ \rangle \rangle$ is a sesquilinear form. The equation \eqref{3rdjune252} is easily verified by using the fact that $\langle ~, ~ \rangle $ is defined over $\cE \otimes_B \overline{\cE}.$ 
 
Now we assume that $B$ is a $\ast$-subalgebra of a $C^*$-algebra. If $\kH$ is positive and $e \in \mathcal{E},$ then 
${}_B \langle \langle e , e \rangle \rangle = \langle e, \overline{e} \rangle \geq 0. $
Thus, ${}_B \langle \langle ~ , ~ \rangle \rangle$ is a left $B$-valued pre-inner product module.
Finally, assume that $\kH$ is positive definite and suppose that $e \in \mathcal{E}$ is such that ${}_B \langle \langle e , f \rangle \rangle = 0 $ for all $f \in \mathcal{E}.$ Then $ \kH ( \overline{e} ) ( f ) = {}_B \langle \langle f , e \rangle \rangle = 0 $
for all $ f \in \mathcal{E} $. Since $\kH$ is injective, we see that $e = 0.$
Therefore, if $e \in \mathcal{E}$ is such that ${}_B \langle \langle e , f \rangle \rangle = 0 $ for all $f \in \mathcal{E},$ then $e = 0.$ Then  by using \cite[Proposition 1.1]{lance}, it easily follows  that ${}_B \langle \langle e , e \rangle \rangle = 0 $ if and only if $e = 0.$
\end{proof}

\begin{cor} \label{10thjan2510}
Suppose that $(\Omega^1, d)$ is a first order $\ast$-differential calculus and let us equip $\form{1}$ with the $\dagger$-structure $\dagger_{\star_{\form{1}}}$ as in  \eqref{14thmay256}.    If $\kH$ is an Hermitian metric on $\Omega^1$,  then $ ( \Omega^1,\dagger_{\star_{\form{1}}} , {}_B \langle \langle ~ , ~ \rangle \rangle )  $ is a weak $\dagger$-bimodule where  ${}_B \langle \langle ~ , ~ \rangle \rangle $ is defined by \eqref{10thjan259}.
\end{cor}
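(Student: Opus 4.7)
The plan is to assemble the claim from Proposition \ref{10thjan2511} together with the bijective correspondence between $\dagger$-structures and star objects established in the preceding proposition, checking each of the three ingredients of Definition \ref{9thmay251} one at a time.

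First, I would verify that $\dagger_{\star_{\Omega^1}}$ is a legitimate $\dagger$-structure on $\Omega^1$ in the sense of Definition \ref{def:daggerbimod}. By Remark \ref{12thmay252}, the map $\star_{\Omega^1}(\omega) = \overline{\omega^\ast}$ endows $\Omega^1$ with the structure of a star object in the bar category $\catmodbb$. The preceding proposition then guarantees that $\dagger_{\star_{\Omega^1}}$ is an antilinear involution satisfying the bimodule compatibility \eqref{10thmay252}. In fact, by \eqref{18thjune251} one has $\dagger_{\star_{\Omega^1}}(\omega) = \omega^\ast$, so this verification is essentially tautological given the $\ast$-structure already present on $\Omega^1$.

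Next, I would invoke Proposition \ref{10thjan2511} directly to conclude that the map ${}_B\langle\langle e, f \rangle\rangle := \kH(\overline{f})(e)$ is a left $B$-valued sesquilinear form on $\Omega^1$ satisfying the balance condition ${}_B\langle\langle \omega b, \eta\rangle\rangle = {}_B\langle\langle \omega, \eta b^\ast \rangle\rangle$ appearing in \eqref{3rdjune252}. No positivity hypothesis is used here, which is precisely why the output is only a weak $\dagger$-bimodule rather than a $\dagger$-bimodule in the terminology of \cite{meslandrennie1}.

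The only remaining ingredient in Definition \ref{9thmay251} is that $\Omega^1$ be finitely generated and projective as a left $B$-module. I expect this to be the main subtlety, since it is not part of the bare definition of an Hermitian metric used in Definition \ref{8thmay252}; my plan is to handle it either by inheriting it as a standing hypothesis (in the spirit of Proposition \ref{27thdec242jb}, whose proof already operates under this assumption), or by extracting it from the isomorphism $\kH: \overline{\Omega^1} \to \hm{B}{\Omega^1}{B}$ combined with the pairing $\langle~,~\rangle$ of \eqref{metric:Hermitian}, which, upon transporting the evaluation and coevaluation of $\hm{B}{\Omega^1}{B}$ through $\kH$, equips $\Omega^1$ with a dual in $\catmodbb$ and hence forces the required finiteness. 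Once this point is settled, the three axioms combine to yield that $(\Omega^1, \dagger_{\star_{\Omega^1}}, {}_B\langle\langle~,~\rangle\rangle)$ is a weak $\dagger$-bimodule.
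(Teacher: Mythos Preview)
Your proposal is correct and follows essentially the same approach as the paper, which handles the finitely-generated-projective step by invoking Proposition \ref{27thdec242jb} to produce a real metric with $\kH=\kH_g$ and then applying Remark \ref{24thjuly241}. Your identification of this as the only nontrivial ingredient, and your anticipation of precisely this route (including the observation that Proposition \ref{27thdec242jb} is itself stated under the f.g.p.\ hypothesis), are spot on.
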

\begin{proof}
By virtue of Proposition \ref{27thdec242jb}, there exists a real metric $\metric$ on $\Omega^1$ such that $\kH = \kH_g.$ Then by Remark \ref{24thjuly241}, $\Omega^1$ is finitely generated and projective as a left $B$-module. Therefore, the conclusion follows from Proposition \ref{10thjan2511}.   
\end{proof}

Our next result  is a  partial converse to Proposition \ref{10thjan2511}. Let us recall that if 
 ${}_B\langle \langle ~ , ~ \rangle \rangle$ is a left inner product on a $B$-bimodule $X,$  then a finite  frame for $X$  is a finite collection of elements $\{ x_j \}_{j=1}^k$ such that for all $x \in X,$
$ x = \sum_{j=1}^k {}_B\langle \langle x , ~ x_j \rangle \rangle x_j. $

\begin{prop} \label{10thmay251}
Let $ ( \mathcal{E}, {}_B \langle \langle ~ , ~ \rangle \rangle ) $ be a $\dagger$-bimodule that admits a finite frame $ \{ x_j \}_j. $ Then, 
$ \kH: \overline{\mathcal{E}} \rightarrow \hm{B}{\cE}{B} ~ {\rm defined} ~ {\rm by} ~ \kH ( \overline{e} ) ( f ) = {}_B \langle \langle f , e \rangle \rangle $
is a positive definite Hermitian metric on $\cE.$ 
\end{prop}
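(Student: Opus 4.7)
The plan is to verify the four properties in the definition of a positive definite Hermitian metric: that $\kH$ is a well-defined bimodule map from $\overline{\mathcal{E}}$ to $\hm{B}{\cE}{B}$, that it is bijective, that it satisfies the Hermitian symmetry condition \eqref{15thdec246}, and finally that the associated pairing is positive definite. The finite frame $\{x_j\}$ will be used only for the bijectivity step.

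First I would check that for fixed $e \in \mathcal{E}$, the map $f \mapsto {}_B\langle\langle f, e\rangle\rangle$ is indeed left $B$-linear, which is immediate from left $B$-linearity of ${}_B\langle\langle~,~\rangle\rangle$ in the first slot. To see that $\kH$ itself is a $B$-bimodule morphism, I would unwind both the bimodule structure \eqref{28thnov231} on $\overline{\mathcal{E}}$ and the bimodule structure \eqref{19thdec234} on $\hm{B}{\cE}{B}$, then verify the two required identities using \eqref{3rdjune252} and the derived identity \eqref{10thjan251}. Concretely, $\kH(b\cdot \overline{e})(f) = {}_B\langle\langle f, eb^*\rangle\rangle = {}_B\langle\langle fb, e\rangle\rangle = (b\cdot \kH(\overline{e}))(f)$, and similarly for the right action one uses \eqref{10thjan251} to pull scalars across.

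For the bijectivity step, I would exploit the finite frame to write down an explicit inverse. Set
\begin{equation*}
\kH^{-1}(\phi) = \overline{\sum_j \phi(x_j)^{\ast} x_j}.
\end{equation*}
Using the frame expansion $f = \sum_j {}_B\langle\langle f, x_j\rangle\rangle x_j$ together with \eqref{10thjan251} and the left $B$-linearity of $\phi$, a direct computation shows that $\kH\circ\kH^{-1}(\phi) = \phi$ and $\kH^{-1}\circ\kH(\overline{e}) = \overline{e}$. The main place to be careful is tracking stars when pulling the coefficient ${}_B\langle\langle e, x_j\rangle\rangle$ out of the second slot of ${}_B\langle\langle~,~\rangle\rangle$; here \eqref{10thjan251} together with the conjugate symmetry gives exactly the cancellation needed.

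Finally, since $\langle x, \overline{y}\rangle = \kH(\overline{y})(x) = {}_B\langle\langle x, y\rangle\rangle$ by the definition of $\langle~,~\rangle$ in \eqref{metric:Hermitian}, the Hermitian axiom \eqref{15thdec246} reduces to the sesquilinear identity ${}_B\langle\langle y, x\rangle\rangle^{\ast} = {}_B\langle\langle x, y\rangle\rangle$, which holds by definition of a sesquilinear form. Positive definiteness of $\kH$ is immediate from $\langle e, \overline{e}\rangle = {}_B\langle\langle e, e\rangle\rangle$ and the assumption that ${}_B\langle\langle~,~\rangle\rangle$ is an inner product. I expect no real obstacle here; the only subtlety is bookkeeping of the conjugate-module and opposite-module conventions when checking that $\kH$ is bilinear.
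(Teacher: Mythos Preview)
Your proposal is correct and follows essentially the same approach as the paper. The only minor difference is that you verify bijectivity by checking both compositions of an explicit inverse, whereas the paper proves surjectivity via the same formula and derives injectivity separately from positive definiteness; both routes are straightforward and yield the same result.
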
 
\begin{proof}
To begin with, the left $B$-linearity of ${}_B \langle \langle ~ , ~ \rangle \rangle$ implies that  $ \kH ( \overline{e} ) $ is a left $B$-linear map for all $e \in \mathcal{E}.$ 
Now, by applying \eqref{10thjan251} and  \eqref{19thdec234}, we obtain
$ \kH ( \overline{e} b ) ( f ) = \kH ( \overline{b^* e} ) ( f ) = {}_B \langle \langle f , b^* e \rangle \rangle = {}_B \langle \langle f , e \rangle \rangle b = \kH ( \overline{e} ) ( f ) b = ( \kH ( \overline{e} ) b ) ( f ), $
which proves that the map $\kH$ is right $B$-linear. Similarly, the left $B$-linearity of $\kH$ follows from \eqref{3rdjune252} and \eqref{19thdec234}.

 For proving that $\kH$ is surjective, we assume $ f \in \hm{B}{\cE}{B}  $ and define  $ x:= \sum_j f ( x_j )^\ast x_j. $
Then
\begin{eqnarray*}
\kH ( \overline{x} ) ( e ) &=& {}_B \langle \langle e , x \rangle \rangle = \sum_j {}_B \langle \langle e , f ( x_j )^\ast x_j \rangle \rangle\\
&=& \sum_j {}_B \langle \langle e , x_j \rangle \rangle f ( x_j ) = f ( \sum_j {}_B \langle \langle e , x_j \rangle \rangle x_j ) = f ( e )
\end{eqnarray*}
and we have applied \eqref{10thjan251}. This proves that $\kH$ is surjective. 
 Moreover,
$ \langle e, \overline{e} \rangle = {}_B \langle \langle e , e \rangle \rangle \geq 0. $
If $ \langle e, \overline{e} \rangle = 0, $ then $ {}_B \langle \langle e , e \rangle \rangle = 0 $ proving that $e = 0.$ Therefore, $\langle ~, ~ \rangle$ is positive definite and $\kH$ is injective. Finally, for all $e, f \in \mathcal{E},$
$ \langle e, \overline{f} \rangle = {}_B \langle \langle e , f \rangle \rangle = ( {}_B \langle \langle f , e \rangle \rangle  )^\ast  = \langle f, \overline{e} \rangle^\ast. $
\end{proof}

By \cite[Corollary 2.11]{meslandrennie1}, the hypothesis of  Proposition \ref{10thmay251} is satisfied if  $B$ is a dense unital local $\ast$-subalgebra of a unital $C^*$-algebra and $\cE$ is finitely generated and projective as a left $B$-module.

\subsection{Hermitian connections on weak \texorpdfstring{$\dagger$}{}-bimodules}

Let $B$ be a $\ast$-algebra and $(\Omega^1, d^\prime)$ a first order differential calculus on $B$ such that $\Omega^1$ is equipped with a $\dagger$-structure satisfying the equation $ d^\prime ( b^\ast ) = - \dagger ( d^\prime ( b ) ). $ Furthermore, we assume that $\cE$ is a $B$-bimodule equipped with a sesquilinear form $ {}_B \langle \langle ~, ~ \rangle \rangle. $ Let us recall \cite[equation (2.6)]{meslandrennie1} the  pairings
$ \mathcal{E} \times ( \Omega^1 \otimes_B \mathcal{E}  ) \rightarrow \Omega^1  $ and $ ( \Omega^1 \otimes_B \mathcal{E}  ) \times \mathcal{E} \rightarrow \Omega^1 $ defined by
\begin{equation} \label{10thjan257}
 {}_{\Omega^1} \langle \langle e , \omega \otimes_B f \rangle \rangle := {}_B \langle \langle e , f \rangle \rangle \dagger ( \omega ),\quad {}_{\Omega^1} \langle \langle \omega \otimes_B  e , f \rangle \rangle := \omega {}_B \langle \langle e , f \rangle \rangle  
\end{equation}
for all $e, f \in \mathcal{E}$ and for all $\omega \in \Omega^1.$ The following definition coincides with \cite[Definition 2.23]{meslandrennie1} when $\mathcal{E}$ is a $\dagger$-bimodule.

\begin{definition} (\cite{meslandrennie1}) \label{21staugust252}
Suppose that $(\Omega^1, d^\prime)$ is a first order differential calculus on a $\ast$-algebra $B$ and  $\dagger_{\form{1}}$ be  a $\dagger$-structure on $\Omega^1$ such that $d^\prime ( b^* ) = - d^\prime ( b )^\dagger $ for all $b \in B.$    A left connection $\nabla^\prime$ on a $B$-bimodule $\cE$ is said to be Hermitian   with respect to a sesquilinear form $ {}_B \langle \langle ~ , ~ \rangle \rangle  $ on $\cE$ if
\begin{equation} \label{10thjan255}
d^\prime ( {}_B \langle \langle e , f \rangle \rangle  ) = {}_{\Omega^1} \langle \langle \nabla^\prime ( e ) , f \rangle \rangle - {}_{\Omega^1} \langle \langle e , \nabla^\prime ( f ) \rangle \rangle.
\end{equation}
\end{definition} 

In this subsection, we show that  connections which are compatible with a real metric in the sense of \cite{BeggsMajid:Leabh} are in one to one correspondence with Hermitian connections with respect to a canonical choice of a weak $\dagger$-bimodule structure on $\Omega^1,$ provided the connections are $\ast$-preserving to begin with. 

In the following lemma, we tacitly use the correspondence of Proposition \ref{prop:oldnewdc}.

\begin{lem} \label{10thjan2512}
Let  $\kH$ be an Hermitian metric on a $B$-bimodule $\cE.$ A left connection $\nabla$ for a $\ast$-differential calculus $(\Omega^1, d)$ on $\mathcal{E}$ is compatible with $\kH$ in the sense of Definition \ref{19thoct231} if and only if the connection $ \sqrt{-1}\nabla$ for the differential calculus $(\Omega^1, \sqrt{-1} d)$ is Hermitian with respect to the sesquilinear form $ {}_B \langle \langle ~ , ~ \rangle \rangle  $ defined in Proposition \ref{10thjan2511}.  
\end{lem}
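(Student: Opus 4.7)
The plan is to unravel both conditions in terms of local expansions of $\nabla$ and verify that the resulting identities coincide once the factors of $\sqrt{-1}$ are tracked. Fix $e,f\in\cE$ and write $\nabla(e)=\sum_i\omega_i\otimes_B x_i$ and $\nabla(f)=\sum_j\eta_j\otimes_B y_j$, so that by \eqref{10thjan254} we have $\widetilde{\nabla}(\overline{f})=\sum_j \overline{y_j}\otimes_B \eta_j^\ast$. Using the identification ${}_B\langle\langle e,f\rangle\rangle=\langle e,\overline{f}\rangle$ from \eqref{10thjan256} in the proof of Proposition \ref{10thjan2511}, Beggs--Majid compatibility of $\nabla$ with $\kH$ (Definition \ref{19thoct231}) unfolds to the identity
$$d\,{}_B\langle\langle e,f\rangle\rangle = \sum_i \omega_i\,{}_B\langle\langle x_i,f\rangle\rangle + \sum_j {}_B\langle\langle e,y_j\rangle\rangle\,\eta_j^\ast. \qquad(\star)$$

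Next I would expand the Mesland--Rennie Hermitian condition \eqref{10thjan255} for the connection $\nabla':=\sqrt{-1}\,\nabla$ and the differential $d':=\sqrt{-1}\,d$ using the pairings \eqref{10thjan257} and the identification $\dagger_{\star_{\form{1}}}(\omega)=\omega^\ast$ from \eqref{18thjune251}. The term ${}_{\Omega^1}\langle\langle\nabla'(e),f\rangle\rangle$ is linear in the first slot, so the factor $\sqrt{-1}$ pulls out cleanly and contributes $\sqrt{-1}\sum_i\omega_i\,{}_B\langle\langle x_i,f\rangle\rangle$. For the term ${}_{\Omega^1}\langle\langle e,\nabla'(f)\rangle\rangle$ the pairing is antilinear in its second slot (the antilinearity being inherited from $\dagger$ via the defining formula \eqref{10thjan257}), so $\sqrt{-1}$ emerges as $-\sqrt{-1}$; combined with the explicit minus sign in \eqref{10thjan255} this produces $+\sqrt{-1}\sum_j {}_B\langle\langle e,y_j\rangle\rangle\,\eta_j^\ast$. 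Meanwhile the left-hand side contributes $\sqrt{-1}\,d\,{}_B\langle\langle e,f\rangle\rangle$, so dividing through by $\sqrt{-1}$ yields exactly $(\star)$.

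Since the computation consists of equalities that are reversible at every step, both implications follow simultaneously. The main point to be careful about is the antilinearity in the second slot of ${}_{\Omega^1}\langle\langle\cdot,\cdot\rangle\rangle$: this is precisely the ingredient that forces the compensating minus sign, and conceptually explains why the twist $d\mapsto\sqrt{-1}\,d$, $\nabla\mapsto\sqrt{-1}\,\nabla$ of Proposition \ref{prop:oldnewdc} is exactly what is needed to reconcile the Beggs--Majid sign convention (where $d$ is a $\ast$-derivation in the sense $d(b^\ast)=d(b)^\ast$) with the Mesland--Rennie convention (where $d'(b^\ast)=-d'(b)^\dagger$). No further structural input is needed; the statement is essentially a dictionary between two formalisms.
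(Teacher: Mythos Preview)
Your proof is correct and follows essentially the same approach as the paper's: both arguments expand $\nabla(e)$ and $\nabla(f)$ in tensor form, unwind the two compatibility conditions using \eqref{10thjan256}, \eqref{10thjan257}, \eqref{18thjune251} and \eqref{10thjan254}, and observe that the antilinearity of $\dagger$ in the second slot of ${}_{\Omega^1}\langle\langle\cdot,\cdot\rangle\rangle$ converts the minus sign of \eqref{10thjan255} into the plus sign of Definition~\ref{19thoct231}, so the two conditions differ exactly by a global factor of $\sqrt{-1}$.
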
 
\begin{proof}
Note that  $\Omega^1$ is a star object in $\catmodbb$ and hence it is equipped with the $\dagger$-structure $\dagger_{\star_{\Omega^1}}$ as in \eqref{18thjune251}. Thus, we are allowed to  use the defining equations \eqref{10thjan257} in the proof. For $e, f \in \cE$ assume
$ \nabla ( e ) = \sum_i \omega_i \otimes_B e_i, ~ \nabla ( f ) = \sum_j \eta_j \otimes_B f_j. $
Then, we have the following:
\begin{eqnarray*}
&& \sqrt{-1} d ( {}_B \langle \langle e , f \rangle \rangle ) - {}_{\Omega^1} \langle \langle \sqrt{-1} \nabla ( e ), f \rangle \rangle + {}_{\Omega^1} \langle \langle e , \sqrt{-1} \nabla ( f ) \rangle \rangle\\
&=& \sqrt{-1} \Big( d ( {}_B \langle \langle e , f \rangle \rangle ) - {}_{\Omega^1} \langle \langle \nabla e , f \rangle \rangle - {}_{\Omega^1} \langle \langle e , \nabla f \rangle \rangle \Big) \\
&=& \sqrt{-1} \Big(  d ( \langle e, \overline{f} \rangle  ) - \sum_i {}_{\Omega^1} \langle \langle \omega_i \otimes_B e_i, f \rangle \rangle - \sum_j {}_{\Omega^1} \langle \langle e , \eta_j \otimes_B f_j \rangle \rangle \Big) ~ {\rm (}  {\rm by} ~ \eqref{10thjan256}  {\rm)}\\
&=& \sqrt{-1} \Big( d ( \langle e, \overline{f} \rangle  ) - \sum_i \omega_i {}_B \langle \langle e_i , f \rangle \rangle - \sum_j {}_B \langle \langle e , f_j \rangle \rangle \dagger_{\star_{\Omega^1}} ( \eta_j ) \Big) \\
&=& \sqrt{-1} \Big( d ( \langle e, \overline{f} \rangle  ) - \sum_i \omega_i \langle e_i, \overline{f} \rangle - \sum_j \langle e, \overline{f_j} \rangle \eta^*_j \Big) ~ {\rm (}  {\rm by} ~ \eqref{18thjune251} ~ {\rm and} ~ \eqref{10thjan256} ~  {\rm)}  \\
&=& \sqrt{-1} \Big( d ( \langle e, \overline{f} \rangle  ) - ( {\rm id} \otimes_B \langle ~, ~ \rangle  ) ( \nabla ( e ) \otimes_B \overline{f} ) - ( \langle ~, ~ \rangle \otimes_B {\rm id}  ) (  {\rm id} \otimes_B \widetilde{\nabla} ) ( e \otimes_B \overline{f} ) \Big)
\end{eqnarray*}
by \eqref{10thjan254}. This concludes the proof.
\end{proof}

Finally, we present the main result of this section, where for a real metric $\metric$ (see Definition \ref{23rddec24jb2}) on $\form{1},$  $\kH_g$ will denote the Hermitian metric on $\Omega^1$ defined in Proposition \ref{27thdec242jb}.

\begin{theorem} \label{9thmay252}
Let $\metric$ be a metric on the space of one forms $\form{1}$ of a $\ast$-differential calculus $\dc$ and  $( \nabla, \sigma )$ be a $\ast$-preserving bimodule connection on $\Omega^1$ such that $\sigma$ is an isomorphism. Then, the following statements are equivalent:
    \begin{enumerate}
        \item[(i)] The connection $\nabla$ is compatible with $\metric$ in the sense of Definition \ref{4thjuly241}.

    \item[(ii)] For the weak $\dagger$-bimodule $(\Omega^1,  \dagger_{\star_{\Omega^1}}, {}_B \langle \langle ~, ~ \rangle \rangle  )$ obtained from the Hermitian metric $\kH_g$ as in Corollary \ref{10thjan2510}, the connection $ \sqrt{-1} \nabla$ for the differential calculus $(\Omega^1, \sqrt{-1} d)$ is Hermitian.   
    \end{enumerate}
\end{theorem}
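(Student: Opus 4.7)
The plan is to chain together Proposition \ref{21stapril251} and Lemma \ref{10thjan2512}, with the weak $\dagger$-bimodule produced by Corollary \ref{10thjan2510} serving as the bridge between the Beggs--Majid picture and the Mesland--Rennie picture.

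First I would verify that the hypotheses needed to invoke Proposition \ref{21stapril251} and Proposition \ref{27thdec242jb} are all in place. Since $\form{1}$ admits the metric $\metric$, Remark \ref{24thjuly241} guarantees that $\form{1}$ is finitely generated and projective as a left $B$-module, so Proposition \ref{27thdec242jb} produces the Hermitian metric $\kH_g$ associated to the real metric $\metric$. Moreover, by assumption $(\nabla, \sigma)$ is a $\ast$-preserving bimodule connection on $\form{1}$ with $\sigma$ invertible, so Proposition \ref{21stapril251} applies and gives the equivalence of statement (i) with the assertion that $\nabla$ is compatible with $\kH_g$ in the sense of Definition \ref{19thoct231}.

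Next, Corollary \ref{10thjan2510} applied to $\kH_g$ shows that $(\form{1}, \dagger_{\star_{\form{1}}}, {}_B\langle\langle\, \cdot\, , \, \cdot\,\rangle\rangle)$ is a weak $\dagger$-bimodule, with ${}_B\langle\langle\, \cdot\, ,\, \cdot\,\rangle\rangle$ defined via $\kH_g$ as in \eqref{10thjan259}. Applying Lemma \ref{10thjan2512} to $\cE = \form{1}$ then yields the equivalence between compatibility of $\nabla$ with $\kH_g$ (Definition \ref{19thoct231}) and the statement that $\sqrt{-1}\nabla$ is Hermitian with respect to this sesquilinear form for the differential calculus $(\form{1}, \sqrt{-1}\,d)$, which is exactly (ii). Composing the two equivalences gives the desired conclusion.

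There is no substantive obstacle, since the hard work has already been done in Proposition \ref{21stapril251} (the translation between real metrics and Hermitian metrics under the $\ast$-preserving hypothesis) and in Lemma \ref{10thjan2512} (the translation between Beggs--Majid compatibility with $\kH_g$ and the Mesland--Rennie Hermitian condition). The only point demanding care is checking that the $\dagger$-structure on $\form{1}$ implicit in the pairings \eqref{10thjan257} appearing in the Hermitian condition is indeed $\dagger_{\star_{\form{1}}}$, which by \eqref{18thjune251} is just the involution $\omega \mapsto \omega^{\ast}$ coming from the $\ast$-calculus structure; this ensures the sesquilinear form and pairings produced by Corollary \ref{10thjan2510} are compatible with the Hermitian condition in Definition \ref{21staugust252}.
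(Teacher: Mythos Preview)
Your proposal is correct and follows essentially the same approach as the paper: chain Proposition \ref{21stapril251} with Lemma \ref{10thjan2512}, using Corollary \ref{10thjan2510} to produce the relevant weak $\dagger$-bimodule structure. The paper's proof is terser but identical in substance; your additional care in verifying the finitely-generated-projective hypothesis via Remark \ref{24thjuly241} and in checking that the $\dagger$-structure is the one from \eqref{18thjune251} is welcome but not strictly new content.
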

\begin{proof}
The proof follows by combining Proposition \ref{21stapril251} and Lemma \ref{10thjan2512}. Indeed, if $\nabla g = 0,$ then  Proposition \ref{21stapril251} implies that $\nabla$ is compatible with $\kH_g$ in the sense of Definition \ref{19thoct231}.  But then, Lemma \ref{10thjan2512} implies that \eqref{10thjan255} holds for $d^\prime = \sqrt{-1} d $ and $\nabla^\prime = \sqrt{-1} \nabla. $ The converse direction follows similarly.   
\end{proof}

\section{Torsion of a connection for weak second order differential structures} \label{19thjune251}

This section compares the definitions of the torsion zero condition of connections given in \cite{BeggsMajid:Leabh} and \cite{meslandrennie1}. We have seen in Definition \ref{9thjan252} that the definition of torsion in the framework of Beggs and Majid involves  a second order differential calculus. On the other hand, Mesland and Rennie work with second order differential structures which are  first order $\ast$-differential calculi equipped with an idempotent $\psi: \Omega^1 \otimes_B \Omega^1 \rightarrow \Omega^1 \otimes_B \Omega^1 $ satisfying some conditions.
However, for the purpose of comparison, we do not need any $\ast$-structure on the calculus and hence we will deal with weak second order differential structures (see Definition \ref{5thmay252}).
At this point, we should also mention that for connections on the space of one-forms on a spectral triple (\cite{connes}), a spectral definition of torsion has been studied, for which we refer to \cite{dabrowskisitarzadv}.

   In Subsection \ref{14thjune252}, we show that if we start with a class of weak second order differential structures, then torsion zero connections in the sense of \cite{meslandrennie1} are  torsion zero connections (in the sense of \cite{BeggsMajid:Leabh}) for the associated Connes' calculus.  In Subsection \ref{14thjune253}, we prove similar results for quasi-tame differential calculi. Finally,  combining  results from previous sections, we prove that for quasi-tame $\ast$-differential calculi, Levi-Civita connections for real metrics in the sense of \cite{BeggsMajid:Leabh} are in one to one correspondence with Levi-Civita connections in the sense of \cite{meslandrennie1}. 

Let us begin by introducing some notations from \cite{meslandrennie1}. Let $( \Omega^1, d )$ be a first order differential calculus. Then we will use the notations 
$$ T^k_d ( B ) := \underbrace{\Omega^1( B ) \otimes_B \cdots \otimes_B \Omega^1 ( B )}_{k  \text{-times}}, ~ T^\ast_d ( B ) := \oplus_{k \geq 0} T^k_d ( B ).  $$
If $ \Omega^k_u, \Omega^\ast_u $ are the universal differential forms defined in Example \ref{6thmay252} and 
$\pi^{\otimes k}: \Omega^k_u ( B ) \rightarrow T^k_d ( B )$ is the surjective map given by 
$$ \pi^{\otimes k} ( a_0 \delta ( a_1 ) \cdots \delta ( a_k )  ) = a_0 da_1 \otimes_B \cdots \otimes_B da_k, $$
then we have the map
$$ \widehat{\pi}:= \oplus_k \pi^{\otimes k}: \Omega^\ast_u ( B ) \rightarrow T^\ast_d ( B ). $$ 
The sub-bimodule $JT^k_d \subseteq T^k_d $ defined as 
$$ JT^k_d = \{ \widehat{\pi} ( \delta ( \omega ) ): \omega \in \Omega^{k - 1}_u \cap \text{Ker} ( \widehat{\pi} )  \} $$
is known as the junk tensors of degree $k$.

Now, let us assume that $(\Omega^1, d)$ is a first order differential calculus such that $\Omega^1$ is a sub-bimodule of an algebra $M.$ Consider the maps 
$$ m_k: T^k_d \rightarrow M ~ \text{defined by } ~ m_k ( \omega_1 \otimes_B  \cdots \otimes_B \omega_k  ) = \omega_1 \cdots \omega_k.  $$
An  analogous  construction of Connes' differential calculus for spectral triples yields a differential calculus for $M.$  

\begin{definition} (\cite[Subsection 3.2]{meslandrennie1})  \label{20thmay251}
Suppose that $(\Omega^1, d)$ is a first order differential calculus such that $\Omega^1$ is a sub $B$-bimodule of an algebra $M.$ Then, the Connes type junk forms for the triple $(\Omega^1, d, M)$ are defined as 
$$ J^k_d = \{ m_k \circ \widehat{\pi} ( \delta ( \omega ) ): \omega \in \Omega^{k - 1}_u \cap \text{Ker} ( m_{k - 1} \circ \widehat{\pi})   \} \subseteq \text{Ran} ( m_k ). $$
Define $\Omega^k_C:= \text{Ran} ( m_k )/J^k_d, \Omega^\bullet_C:= \oplus_{k} \Omega^k_C, $  
$$  \wedge_C ( (  \omega + J^k_d  ) \otimes_B ( \eta + J^l_d  )  ) = \omega . \eta + J^{k + l}_d,\,\, d_C ( b_0 db_1 \cdots db_k + J^k_d ) =  db_0 db_1 \cdots db_k + J^{k + 1}_d $$
for all $\,\omega \in \text{Ran} ( m_k ), \eta \in \text{Ran} ( m_l ), $ then  $(\Omega^\bullet_C, \wedge_C, d_C)$ is called the Connes' differential calculus for the data $(\Omega^1, d, M).$ 
\end{definition}

In  (\cite{connes}), Connes defined differential calculus to any spectral triple $(B, H, D).$ In the above definition, if $M = B ( H ) $ we recover  the Connes' calculus. 

Let us note the following simple observation.

\begin{prop} \label{10thjune251}
Let $\dc$ be a differential calculus on an algebra $B.$ Then, the Connes' calculus  for $(\Omega^1, d)$ with $M=\Omega^\bullet$ is $\dc$ itself.
\end{prop}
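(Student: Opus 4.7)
The plan is to show that the Connes' construction applied to $(\Omega^1, d, \Omega^\bullet)$ collapses to the original data by verifying, stage by stage, that nothing new is generated and nothing is killed.

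First, I would identify $\mathrm{Ran}(m_k) \subseteq \Omega^\bullet$. By definition of a differential calculus, $\Omega^\bullet$ is generated as an algebra by $B$ and $dB$, so every element of $\Omega^k$ is a finite sum of elements of the form $b_0\, db_1 \wedge \cdots \wedge db_k$. Under $m_k \circ \widehat{\pi}$, the universal tensor $b_0\,\delta(b_1)\cdots\delta(b_k)$ maps first to $b_0\, db_1 \otimes_B \cdots \otimes_B db_k$ and then to $b_0\, db_1 \wedge \cdots \wedge db_k$, so $\mathrm{Ran}(m_k) = \Omega^k$. Hence $\Omega^k_C = \Omega^k/J^k_d$ and it remains to show that $J^k_d = 0$.

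Next, I would observe that the composition $\Pi_\bullet := \bigoplus_k m_k \circ \widehat{\pi}\colon \Omega^\bullet_u(B) \to \Omega^\bullet$ is a morphism of differential graded algebras extending the identity on $B$. Multiplicativity is immediate from the definitions of $\widehat{\pi}$ and the $m_k$, and the compatibility with the differentials is the one-line check
\[
\Pi_k(\delta(b_0\,\delta(b_1)\cdots\delta(b_{k-1}))) = db_0 \wedge db_1 \wedge \cdots \wedge db_{k-1} = d\bigl(\Pi_{k-1}(b_0\,\delta(b_1)\cdots\delta(b_{k-1}))\bigr),
\]
extended by $B$-linearity. In particular, $\Pi$ commutes with the differentials.

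Now for any $\omega \in \Omega^{k-1}_u \cap \mathrm{Ker}(\Pi_{k-1})$, one has $\Pi_k(\delta\omega) = d(\Pi_{k-1}(\omega)) = d(0) = 0$, which forces $J^k_d = 0$ for every $k \geq 1$. Therefore $\Omega^k_C = \Omega^k$ as $B$-bimodules. Finally I would verify that the structure maps agree: for $\omega \in \Omega^k,\, \eta \in \Omega^l$ one has $\wedge_C(\omega \otimes_B \eta) = \omega \cdot \eta = \omega \wedge \eta$ because the product in $M = \Omega^\bullet$ is precisely $\wedge$, and $d_C(b_0\,db_1\cdots db_k) = db_0\,db_1\cdots db_k = d(b_0\,db_1\wedge\cdots\wedge db_k)$, so $d_C = d$. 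This identifies the Connes' calculus with $\dc$.

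There is no real obstacle here; the only point one must be slightly careful about is that $\Pi$ is well defined as a DGA map, which is automatic from the universal property of $(\Omega^\bullet_u, \delta)$ together with the existence of the derivation $d\colon B \to \Omega^1 \subseteq \Omega^\bullet$.
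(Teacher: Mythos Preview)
Your proof is correct and follows essentially the same route as the paper: both arguments hinge on the observation that the composite $\Pi_k = m_k \circ \widehat{\pi}$ (the paper calls it $q_k$) intertwines $\delta$ and $d$, so that junk forms vanish, and then one checks $\wedge_C = \wedge$ and $d_C = d$ directly. Your framing of $\Pi$ as a DGA morphism is a slightly more conceptual packaging of the same computation.
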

\begin{proof}
Note that the multiplication in $\Omega^\bullet$ is given by the $\wedge$ map.
Let $q_k = m_k \circ \widehat{\pi} $ denote the canonical  quotient map $\uform{k}\to \form{k}.$  For all $\omega \in \Omega^k_u ( B ),$ we have
\begin{equation} \label{21stmay251}
d \circ q_k(\omega) = m_{k + 1} \circ \widehat{ \pi}\circ \delta(\omega).    
\end{equation}
 Let $x\in J^k_d.$ Then $x= m_k (\widehat{\pi}(\delta(\omega)))$ for some $\omega\in \uform{k-1}$ such that $m_{k-1}(\widehat{\pi}(\omega))=0.$ Hence, by using \eqref{21stmay251}, we get 
$$x= m_k(\widehat{\pi}(\delta(\omega)))  = d(q_{k-1}(\omega))= d(m_{k-1}(\widehat{\pi}(\omega)))=d(0)=0.$$ Then $\Omega^k_C= \mathrm{Ran}(m_k)= \form{k}$ and $\wedge_C= \wedge.$ Finally,
\begin{align*}
    d_C(b_0 db_1 \wedge \cdots \wedge db_{k-1})= m_k(db_0 \ot_B \cdots \ot_B db_{k-1})= db_0 \wedge \cdots db_{k-1}=d(  b_0 db_1 \wedge \cdots \wedge db_{k-1})
\end{align*}
which shows that $d_C=d.$ This completes the proof.
\end{proof}

\subsection{Weak second order differential structures} \label{14thjune252}
The notion of torsion in the framework of Mesland and Rennie requires the following definition which is a weakening of \cite[Definition 3.4]{meslandrennie1}.

\begin{definition} \label{5thmay252}
A weak second order differential structure on $B$ is a triplet $ ( \Omega^1, d, \psi )  $ where $( \Omega^1, d )$ is a  first order differential calculus on $B$
  and $\psi: \Omega^1 \otimes_B \Omega^1 \rightarrow \Omega^1 \otimes_B \Omega^1 $ is a $B$-bilinear idempotent such that  $JT^2_d \subseteq {\rm Ran} ( \psi ). $
\end{definition}

If $ ( \Omega^1, d, \psi ) $ is a weak second order differential structure, consider the map (see \cite[equation (3.1)]{meslandrennie1}):
\begin{equation} \label{9thjune252}
 d_\psi: \Omega^1 \rightarrow \Omega^1 \otimes_B \Omega^1, ~ d_\psi ( \omega ) = ( 1 - \psi ) ( \widehat{\pi} \circ \delta \circ \pi^{-1}  ) ( \omega )
 \end{equation}
which is well-defined because of the assumption $JT^2_d \subseteq {\rm Ran} ( \psi )$ and the idempotence of $\psi.$ 

\begin{definition} \braces{\cite[Definition 4.3]{meslandrennie1}}\label{9thjune251}
A left connection $\nabla$ on $\form{1}$ for a weak second order differential structure $ ( \Omega^1, d, \psi ) $  is called torsionless in the sense of \cite{meslandrennie1} if 
$ ( 1 - \psi ) \nabla = d_{\psi}. $
\end{definition}

For the rest of this subsection, we will concentrate on the class of weak second order differential structures where $\Omega^1$ is a sub-bimodule of a $B$-bimodule $M$ and $M$ is an algebra. Then it is easily checked that  
\begin{equation} \label{7thjan254}
 m_2 ( JT^2_d ) = J^2_d.
 \end{equation} 
Moreover, from \cite[Proposition 3.14]{meslandrennie1}, we have  
\begin{equation} \label{6thjan252}
 \wedge_C \circ d_\psi  = d_C: \Omega^1 \rightarrow \Omega^2_C.
 \end{equation}

\begin{lem} \label{6thjan251}
Suppose that $ ( \Omega^1, d,  \psi  ) $ is a weak second order differential structure such that $\Omega^1$ is a sub-bimodule of a $B$-bimodule $M$ and $M$ is an algebra. Then the following statements are equivalent:
\begin{enumerate}
\item[$(i)$] $ {\rm Ran} ( \psi ) \subseteq m^{-1}_2 ( J^2_d ).  $
\item[$(ii)$]  $ {\rm Ran} ( m_2 \circ \psi ) = J^2_d.  $ 
\item[$(iii)$]  The map $ \wedge_C \circ \psi: \Omega^1 \otimes_B \Omega^1 \rightarrow \Omega^2_C $ is identically zero, i.e, $ {\rm Ran} ( \psi ) \subseteq {\rm Ker} ( \wedge_C ). $
\end{enumerate}
\end{lem}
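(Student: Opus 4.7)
The plan is to unwind the definition of $\wedge_C$ in degrees $\le 2$ and then combine this with the standing hypothesis $JT^2_d \subseteq {\rm Ran}(\psi)$ from Definition \ref{5thmay252} together with the identity $m_2(JT^2_d) = J^2_d$ recorded in \eqref{7thjan254}. Since $\Omega^0_u = B$, the set $\Omega^0_u \cap {\rm Ker}(m_0 \circ \widehat{\pi})$ is trivial (as $m_0$ is the inclusion of $B$ into $M$), so $J^1_d = 0$ and therefore $\Omega^1_C = {\rm Ran}(m_1) = \Omega^1$. Consequently, on the subspace $\Omega^1 \otimes_B \Omega^1$ the map $\wedge_C$ factors as $q \circ m_2$, where $q \colon {\rm Ran}(m_2) \to \Omega^2_C$ is the canonical quotient modulo $J^2_d$.

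The equivalence (i) $\Leftrightarrow$ (iii) is then immediate from this factorization: the statement $\wedge_C \circ \psi = 0$ reads $q(m_2(\psi(x))) = 0$ for every $x \in \Omega^1 \otimes_B \Omega^1$, which by construction of $q$ is equivalent to $m_2(\psi(x)) \in J^2_d$, i.e.\ to $\psi(x) \in m_2^{-1}(J^2_d)$. This is precisely the containment ${\rm Ran}(\psi) \subseteq m_2^{-1}(J^2_d)$ appearing in (i).

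For (i) $\Leftrightarrow$ (ii), the implication (ii) $\Rightarrow$ (i) is clear, since ${\rm Ran}(m_2 \circ \psi) = J^2_d$ forces $m_2(\psi(x)) \in J^2_d$ for all $x$. For the converse, condition (i) yields the inclusion ${\rm Ran}(m_2 \circ \psi) \subseteq J^2_d$, so the only content is the reverse inclusion. This is where the standing hypothesis enters: $JT^2_d \subseteq {\rm Ran}(\psi)$ together with \eqref{7thjan254} gives
\[
J^2_d \;=\; m_2(JT^2_d) \;\subseteq\; m_2({\rm Ran}(\psi)) \;=\; {\rm Ran}(m_2 \circ \psi),
\]
from which (ii) follows. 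Thus the lemma is essentially a bookkeeping statement; there is no real obstacle beyond correctly identifying $\Omega^1_C$ with $\Omega^1$ so that $\wedge_C \circ \psi$ is meaningful on $\Omega^1 \otimes_B \Omega^1$, and invoking the equality $m_2(JT^2_d) = J^2_d$ to promote the inclusion in (i) to the equality in (ii).
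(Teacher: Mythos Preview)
Your proof is correct and follows essentially the same route as the paper's own argument: both rely on the factorization $\wedge_C = q_{J^2_d}\circ m_2$ on $\Omega^1\otimes_B\Omega^1$ and on the inclusion $J^2_d = m_2(JT^2_d)\subseteq {\rm Ran}(m_2\circ\psi)$ coming from \eqref{7thjan254} and the standing hypothesis $JT^2_d\subseteq{\rm Ran}(\psi)$. The only difference is organizational---you prove $(i)\Leftrightarrow(iii)$ and $(i)\Leftrightarrow(ii)$ whereas the paper cites \cite{meslandrennie1} for $(i)\Leftrightarrow(ii)$ and then proves $(ii)\Leftrightarrow(iii)$---and you helpfully make explicit why $\Omega^1_C=\Omega^1$, which the paper leaves implicit.
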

\begin{proof}
The equivalence of $(i)$ and $(ii)$ is already observed in \cite[page 21]{meslandrennie1}. If $(ii)$ holds and $q_{J^2_d}: \text{Ran} ( m_2 ) \rightarrow \Omega^2_C $ denotes the quotient map, then for all $x \in \Omega^1 \otimes_B \Omega^1,$ we obtain 
$$ \wedge_C \circ \psi ( x ) = q_{J^2_d} \circ m_2 \circ \psi ( x ) \in  q_{J^2_d} ( J^2_d ) = 0.  $$
Finally, let us assume that $(iii)$ holds. Then $ q_{J^2_d} \circ m_2 \circ \psi ( x ) = \wedge_C \circ \psi ( x ) = 0 $  for all $x \in \Omega^1 \otimes_B \Omega^1$  
so that $m_2 \circ \psi ( x ) \in {\rm Ker} ( q_{J^2_d} ) = J^2_d. $
This proves that $ {\rm Ran} ( m_2 \circ \psi ) \subseteq J^2_d. $
On the other hand, $ J^2_d = m_2 ( JT^2_d ) \subseteq {\rm Ran} ( m_2 \circ \psi ) $ by virtue of \eqref{7thjan254}.
\end{proof}

Now we are in a position to compare Definition \ref{9thjune251} for a weak second order differential structure and Definition \ref{9thjan252} for the associated Connes' calculus. 

\begin{prop} \label{7thjan251}
Suppose  that $ ( \Omega^1, d, \psi  ) $ is a weak second order differential structure such that $\Omega^1$ is a sub-bimodule of a $B$-bimodule $M$ and $M$ is an algebra. Moreover, assume that $\nabla$ is a connection of $\Omega^1.$ Then the following statements hold:
\begin{enumerate}
\item[$(i)$] If $ {\rm Ker} ( \wedge_C ) \subseteq {\rm Ran} ( \psi ) $ and  $\wedge_C \circ \nabla = d_C,$ then  $ ( 1 - \psi ) \nabla = d_\psi. $

\item[$(ii)$] On the other hand,  if the equivalent  conditions of Lemma \ref{6thjan251} are satisfied and  $ ( 1 - \psi ) \nabla = d_\psi, $ then $\wedge_C \circ \nabla = d_C.$
\end{enumerate}
\end{prop}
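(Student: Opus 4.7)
The plan is to exploit the identity $\wedge_C \circ d_\psi = d_C$ from \eqref{6thjan252} as a bridge between the Beggs-Majid torsion (which is measured against $\wedge_C$ and $d_C$) and the Mesland-Rennie torsion (which is measured against $\psi$ and $d_\psi$). The two implications then reduce to simple idempotent bookkeeping in $\Omega^1 \otimes_B \Omega^1$, using the decomposition $\mathrm{Ran}(\psi) = \mathrm{Ker}(1-\psi)$ that comes from $\psi^2 = \psi$.

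For $(i)$, I would introduce the difference $\Xi := \nabla - d_\psi : \Omega^1 \to \Omega^1 \otimes_B \Omega^1$. Applying $\wedge_C$ and using \eqref{6thjan252} together with the hypothesis $\wedge_C \circ \nabla = d_C$ gives $\wedge_C \circ \Xi = d_C - d_C = 0$, so $\Xi$ takes values in $\mathrm{Ker}(\wedge_C)$. The first hypothesis of $(i)$ says $\mathrm{Ker}(\wedge_C) \subseteq \mathrm{Ran}(\psi)$, and since $\mathrm{Ran}(\psi) = \mathrm{Ker}(1-\psi)$ by idempotency, we conclude $(1-\psi) \circ \Xi = 0$, i.e.\ $(1-\psi)\nabla = (1-\psi) \circ d_\psi$. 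Finally, the definition \eqref{9thjune252} gives $d_\psi = (1-\psi)(\widehat{\pi} \circ \delta \circ \pi^{-1})$, and $(1-\psi)^2 = 1-\psi$ yields $(1-\psi) \circ d_\psi = d_\psi$, completing the argument.

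For $(ii)$, the equivalent conditions of Lemma \ref{6thjan251} supply $\mathrm{Ran}(\psi) \subseteq \mathrm{Ker}(\wedge_C)$, equivalently $\wedge_C \circ \psi = 0$, so $\wedge_C = \wedge_C \circ (1-\psi)$. Substituting the hypothesis $(1-\psi)\nabla = d_\psi$ and invoking \eqref{6thjan252} again gives $\wedge_C \circ \nabla = \wedge_C \circ (1-\psi) \circ \nabla = \wedge_C \circ d_\psi = d_C$, as required.

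There is no serious obstacle here: the entire proposition is a two-line consequence of \eqref{6thjan252} once one observes that the two hypotheses are dual conditions on how $\mathrm{Ker}(\wedge_C)$ and $\mathrm{Ran}(\psi)$ nest. The only point to verify carefully is that all formal manipulations with $\widehat{\pi} \circ \delta \circ \pi^{-1}$ are legitimate, but \eqref{6thjan252} lets us avoid choosing lifts through $\pi^{-1}$ altogether.
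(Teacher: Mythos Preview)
Your proof is correct and follows essentially the same approach as the paper: for $(i)$ you form the difference $\nabla - d_\psi$, show via \eqref{6thjan252} that it lands in $\mathrm{Ker}(\wedge_C) \subseteq \mathrm{Ran}(\psi) = \mathrm{Ker}(1-\psi)$, and finish with $(1-\psi)d_\psi = d_\psi$; for $(ii)$ you split $\wedge_C \circ \nabla$ through $\psi$ and $1-\psi$ and kill the $\psi$-piece using Lemma~\ref{6thjan251}. The paper's argument is identical in structure, differing only in cosmetic presentation.
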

\begin{proof}
For the first assertion, we observe that
$$ \wedge_C \circ ( \nabla - d_\psi ) = \wedge_C \circ \nabla - \wedge_C \circ d_\psi = \wedge_C \circ \nabla - d_C = 0  $$
by using \eqref{6thjan252}. Therefore $ {\rm Ran} ( \nabla - d_\psi ) \subseteq {\rm Ker} ( \wedge_C ) \subseteq {\rm Ran} ( \psi ). $
Hence, if $x \in \Omega^1,$ then
$ ( \nabla - d_\psi ) ( x ) = \psi ( y ) $ for some $ y \in \Omega^1 \otimes_B \Omega^1.$ This implies that
\begin{equation} \label{7thjan252}
( 1 - \psi ) ( \nabla ( x ) - d_\psi ( x )  ) = ( 1 - \psi ) \psi ( y ) = 0.
\end{equation}
Since $ {\rm Ran} ( d_\psi ) \subseteq {\rm Ran} ( 1 - \psi ) $ (see \eqref{9thjune252}) and $1 - \psi$ is an idempotent, we deduce that
$ d_\psi ( x ) = ( 1 - \psi ) d_\psi ( x ) = ( 1 - \psi ) \nabla ( x ) $
by applying \eqref{7thjan252}.

For proving the second assertion, we compute
\begin{eqnarray*}
&& \wedge_C \circ \nabla = \wedge_C \circ ( 1 - \psi ) \circ \nabla + \wedge_C \circ \psi \circ \nabla
                     = \wedge_C \circ ( 1 - \psi ) \circ \nabla + 0 ~ {\rm (}  {\rm by} ~ {\rm Lemma} ~ \ref{6thjan251}  {\rm)}\\
                     &=& \wedge_C \circ d_\psi  = d_C 
                     \end{eqnarray*}
                    by  \eqref{6thjan252}.
\end{proof}

\subsection{Quasi-tame differential calculi as weak  second order differential structures} \label{14thjune253}

We recall that if $\dc$ is a differential calculus and $i: {\rm Ker} ( \wedge ) \rightarrow \Omega^1 \otimes_B \Omega^1 $ denotes the inclusion map, then the following is a short-exact sequence of $B$-bimodules:
\begin{equation} \label{4thjan251}
 0 \rightarrow {\rm Ker} ( \wedge ) \xrightarrow{i} \Omega^1 \otimes_B \Omega^1 \xrightarrow{\wedge} \Omega^2 \rightarrow 0.
 \end{equation}
 Following \cite[Definition 2.3]{article2} (also see Definition \cite[2.11]{article1}), we make the following definition:
 \begin{definition}
 We say that a differential calculus $\dc$ is quasi-tame if $\Omega^1$ is finitely generated and projective as a left $B$-module and moreover,  there exists a splitting of the short exact sequence in \eqref{4thjan251}, i.e, there exists a  $B$-bilinear map $s: \Omega^2 \rightarrow \Omega^1 \otimes_B \Omega^1$ such that $\wedge \circ s = {\rm id}_{\Omega^2}.$
\end{definition}

The splitting of the map $\wedge $ was considered by Beggs and Majid for defining the Ricci curvature of a connection (\cite[page 574]{BeggsMajid:Leabh}). In \cite[Theorem 2.13]{article1}, it was proved that  a quasi-tame calculus always admits a torsionless connection. 

Let $ ( \Omega^\bullet, \wedge, d, s ) $ be a quasi-tame differential calculus. Consider the  idempotent 
\begin{equation} \label{eq:psi_S}
    \psi_s:= 1 - s \circ \wedge: \Omega^1 \otimes_B \Omega^1 \rightarrow \Omega^1 \otimes_B \Omega^1.
\end{equation}

Then we have the following result:

\begin{prop}  \label{9thjan251} 
Let $ ( \Omega^\bullet, \wedge, d) $ be a  differential calculus such that $\Omega^1$ is finitely generated and projective as a left $B$-module. Then $s \mapsto \psi_s $ defines a bijective correspondence between quasi-tame differential structures on $\dc$ and weak second order differential structures $(\form{1}, d, \psi) $ satisfying $\text{Ker} (\wedge ) = \text{Ran} ( \psi ). $
\end{prop}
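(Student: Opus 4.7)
The plan is to exhibit an explicit two-sided inverse to $s \mapsto \psi_s$ by exploiting that a $B$-bilinear idempotent on $\Omega^1 \otimes_B \Omega^1$ is determined by its range and kernel, and that the constraint $\mathrm{Ran}(\psi) = \mathrm{Ker}(\wedge)$ together with surjectivity of $\wedge$ forces $\mathrm{Ker}(\psi)$ to map isomorphically onto $\Omega^2$ under $\wedge$, thus producing a splitting.

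For the forward direction, starting with a splitting $s$, I will verify that $\psi_s = 1 - s \circ \wedge$ satisfies the three requirements in Definition \ref{5thmay252}. It is $B$-bilinear because $s$ and $\wedge$ are, and idempotent via $\psi_s^2 = 1 - 2 s\wedge + s\wedge s\wedge = 1 - s\wedge = \psi_s$ using $\wedge\circ s = \mathrm{id}_{\Omega^2}$. I will then check both inclusions giving $\mathrm{Ran}(\psi_s) = \mathrm{Ker}(\wedge)$: if $x = \psi_s(y)$ then $\wedge x = \wedge y - \wedge s \wedge y = 0$, while if $x \in \mathrm{Ker}(\wedge)$ then $\psi_s(x) = x$. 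The one non-formal step is the containment $JT^2_d \subseteq \mathrm{Ker}(\wedge)$: writing a representative element as $\widehat\pi(\delta\omega)$ with $\omega = \sum_i b_0^i \delta(b_1^i)\in \Omega^1_u \cap \mathrm{Ker}(\pi)$, the hypothesis $\pi(\omega)=0$ reads $\sum_i b_0^i d b_1^i = 0$ in $\Omega^1$, and applying $d$ (whose square vanishes in $\Omega^\bullet$) gives $\wedge\circ\widehat\pi(\delta\omega) = \sum_i d b_0^i \wedge d b_1^i = d\bigl(\sum_i b_0^i d b_1^i\bigr) = 0$.

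For the reverse direction, given $\psi$ with $\mathrm{Ran}(\psi) = \mathrm{Ker}(\wedge)$, the idempotent decomposition $\Omega^1 \otimes_B \Omega^1 = \mathrm{Ker}(\wedge) \oplus \mathrm{Ker}(\psi)$ shows that $\wedge$ restricts to a $B$-bimodule isomorphism $\mathrm{Ker}(\psi) \to \Omega^2$, surjective because $\wedge$ is surjective and injective because $\mathrm{Ker}(\wedge)\cap \mathrm{Ker}(\psi) = 0$. I will define $s_\psi$ as the composition of the inverse of this restricted map with the inclusion into $\Omega^1 \otimes_B \Omega^1$, which is manifestly a $B$-bilinear splitting of $\wedge$.

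The last step is to check the two assignments invert one another. Starting from $\psi$, any $x$ decomposes as $x = \psi(x) + (1-\psi)(x)$ with $(1-\psi)(x) \in \mathrm{Ker}(\psi) = \mathrm{Ran}(s_\psi)$, so $\wedge x = \wedge(1-\psi)(x)$ and $s_\psi \wedge x = (1-\psi)(x)$ by construction of $s_\psi$, yielding $\psi_{s_\psi}(x) = x - (1-\psi)(x) = \psi(x)$. Starting from $s$, one computes $\mathrm{Ker}(\psi_s) = \mathrm{Ran}(s)$ directly, so $s_{\psi_s}$ is the inverse of $\wedge|_{\mathrm{Ran}(s)}$; since $s$ itself is a right inverse of $\wedge$ landing in $\mathrm{Ran}(s)$ and $\wedge|_{\mathrm{Ran}(s)}$ is a bijection, uniqueness forces $s_{\psi_s} = s$. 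No step requires any structure beyond that of the differential graded algebra; the only substantive input is $d^2 = 0$ in the junk verification.
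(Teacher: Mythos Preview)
Your proof is correct and follows essentially the same approach as the paper's: both construct $s_\psi$ as the inverse of $\wedge|_{\mathrm{Ker}(\psi)}=\wedge|_{\mathrm{Ran}(1-\psi)}$, verify $JT^2_d\subseteq\mathrm{Ker}(\wedge)$ via $d^2=0$ applied to $\sum_i b_0^i\,db_1^i=0$, and check the two composites $\psi_{s_\psi}=\psi$ and $s_{\psi_s}=s$ using the idempotent decomposition. Your write-up is in fact slightly more explicit than the paper's in justifying $s_{\psi_s}=s$ (via $\mathrm{Ker}(\psi_s)=\mathrm{Ran}(s\circ\wedge)=\mathrm{Ran}(s)$), which the paper leaves as ``easily checked.''
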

\begin{proof}
Although the statement essentially follows from \cite[Lemma 3.5]{meslandrennie1}, we give a proof for the sake of completeness. 

Let $(\Omega^\bullet, \wedge, d, s)$ be a quasi-tame differential calculus. Since $s \circ \wedge$ is an idempotent with $\mathrm{Ker} (s \circ \wedge)= \mathrm{Ker}(\wedge),$  
$$\mathrm{Ran}(\psi_s)=\mathrm{Ker}(s \circ \wedge)= \mathrm{Ker} ( \wedge ). $$
The proof of the inclusion $JT^2_d \subseteq \text{Ran} ( \psi_s ) $ follows along the lines of the proof of Proposition \ref{10thjune251}. Indeed, if $q_2 = \wedge \circ \widehat{\pi}: \Omega^2_u \rightarrow \Omega^2 $ denotes the quotient map as in that proposition, then for $\omega \in \Omega^1_u \cap \text{Ker} ( \widehat{\pi} ), $ we get
$$ \wedge ( \widehat{\pi} ( \delta ( \omega )  ) ) = d ( q_2 ( \omega ) ) = d ( \wedge \circ \widehat{\pi} ( \omega ) ) = 0  $$
implying that 
$$ JT^2_d \subseteq \text{Ker} ( \wedge ) = \text{Ker} ( s \circ \wedge ) = \text{Ran} ( 1 - s \circ \wedge ) = \text{Ran} ( \psi_s ).  $$
 Thus $(\form{1}, d, \psi)$ is a weak second order differential calculus.

Conversely, let $(\Omega^1, d, \psi)$ be a weak second order differential structure such that $\text{Ker} ( \wedge ) = \text{Ran} ( \psi ). $ Then it is easy to observe that the map 
$\wedge: \text{Ran} ( 1 - \psi ) \rightarrow \Omega^2 $ is an isomorphism. 
Then we define $s_\psi: \Omega^2 \rightarrow \Omega^1 \otimes_B \Omega^1 $ to be the inverse of the map $\wedge|_{\text{Ran} ( 1 - \psi )}.$ Then $(\Omega^\bullet, \wedge, d, s)$ is a quasi-tame differential calculus. 

Thus, we are left to prove that the correspondences $s \mapsto \psi_s$ and $\psi \mapsto s_\psi$ are inverses of each other. Suppose that $(\Omega^1, d, \psi)$ is a weak second order differential structure satisfying $\text{Ker} (\wedge) = \text{Ran} (\psi). $  If $x \in \text{Ran} ( 1 - \psi ) $ and $y \in \text{Ran} ( \psi ), $ we get
\begin{eqnarray*}
\psi_{s_\psi} ( x + y ) &=& ( 1 - s_\psi \circ \wedge ) ( x + y )\\
 &=& (  x - s_\psi \circ \wedge ( x )  ) + ( y - s_\psi \circ \wedge ( y ) )\\
 &=& ( x - x ) + (  y - s_\psi ( 0 )  )  ~ \text{(As } ~ s_\psi= (\wedge|_{\mathrm{Ran} (1-\psi)})^{-1}~ \text{and }  \text{Ran} ( \psi ) = \text{Ker} ( \wedge ) \text{)}\\
 &=& \psi ( x ) + \psi ( y )\\
 &=& \psi ( x + y ).
\end{eqnarray*}
Therefore, $\psi_{s_\psi} = \psi. $ Similarly, if $(\Omega^\bullet, \wedge, d, s)$ is a quasi-tame differential calculus, then it can be easily checked that $s_{\psi_s} = s. $
\end{proof}

This brings us to the main result of this subsection. 

\begin{theorem} \label{14thjan251}
Let $ ( \Omega^\bullet, \wedge, d,  s ) $ be a quasi-tame differential calculus and  $\nabla$ is a connection on $\Omega^1,$ then the following statements are equivalent:
\begin{enumerate}
\item[$(i)$] $\nabla$ is torsionless in the sense of Definition \ref{9thjan252}.

\item[$(ii)$] $\nabla$ is torsionless in the sense of Definition \ref{9thjune251} for the weak second order differential structure  $ ( \Omega^1, d, \psi_s ) $  obtained from Proposition \ref{9thjan251}.
\end{enumerate}
\end{theorem}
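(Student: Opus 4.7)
My plan is to deduce Theorem \ref{14thjan251} from the general result Proposition \ref{7thjan251} by identifying the associated Connes' calculus with the original differential calculus $\dc$. Since $\Omega^\bullet$ is itself an algebra, we may view $\Omega^1$ as a sub-bimodule of $M := \Omega^\bullet$. Proposition \ref{10thjune251} then tells us that the Connes' calculus $(\Omega^\bullet_C, \wedge_C, d_C)$ associated to the triple $(\Omega^1, d, M)$ is nothing other than $\dc$ itself; in particular $\wedge_C = \wedge$ and $d_C = d$, so the Beggs--Majid torsionlessness condition $\wedge \circ \nabla = d$ (Definition \ref{9thjan252}) coincides exactly with the Connes-calculus condition $\wedge_C \circ \nabla = d_C$ that appears in the hypotheses and conclusions of Proposition \ref{7thjan251}.

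Next I would verify that both inclusion hypotheses needed for the two parts of Proposition \ref{7thjan251} are automatic for the weak second order differential structure $(\Omega^1, d, \psi_s)$ produced from the splitting $s$ via $\psi_s = 1 - s\circ \wedge$. A direct calculation using $\wedge\circ s = \mathrm{id}_{\Omega^2}$ shows simultaneously that $\psi_s^2 = \psi_s$ and that $\mathrm{Ran}(\psi_s) = \mathrm{Ker}(\wedge)$ (this equality is already recorded inside the proof of Proposition \ref{9thjan251}). Under the identification $\wedge_C = \wedge$, this single equality provides both $\mathrm{Ker}(\wedge_C) \subseteq \mathrm{Ran}(\psi_s)$ (the hypothesis of Proposition \ref{7thjan251}(i)) and $\mathrm{Ran}(\psi_s) \subseteq \mathrm{Ker}(\wedge_C)$, which is condition (iii) of Lemma \ref{6thjan251} and therefore unlocks the hypothesis of Proposition \ref{7thjan251}(ii).

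With the identifications and inclusions in place, the equivalence follows by a direct application of Proposition \ref{7thjan251}. Part (i) of that proposition yields the implication $(i)\Rightarrow(ii)$: from $\wedge\circ \nabla = d$ we obtain $(1-\psi_s)\nabla = d_{\psi_s}$, which is precisely the Mesland--Rennie torsionlessness of Definition \ref{9thjune251} for $(\Omega^1, d, \psi_s)$. Part (ii) of the same proposition yields the converse $(ii)\Rightarrow(i)$: from $(1-\psi_s)\nabla = d_{\psi_s}$ we recover $\wedge\circ \nabla = d$.

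I do not expect any real obstacle in executing this plan, as all the substantive work has been carried out earlier in the section: the existence of the idempotent $\psi_s$ satisfying $\mathrm{Ran}(\psi_s) = \mathrm{Ker}(\wedge)$ is the content of Proposition \ref{9thjan251}, and the abstract equivalence between $(1-\psi)\nabla = d_\psi$ and the Connes-calculus equation $\wedge_C\circ\nabla = d_C$ under the two inclusion hypotheses is Proposition \ref{7thjan251}. The only conceptually delicate step is the seemingly innocuous choice of ambient algebra $M = \Omega^\bullet$, without which the relevant Connes' calculus would not literally recover $\dc$; this is precisely what Proposition \ref{10thjune251} legitimizes.
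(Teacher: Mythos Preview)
Your proposal is correct and follows essentially the same approach as the paper's proof: identify the Connes' calculus for $\Omega^1 \subseteq \Omega^\bullet$ with $\dc$ via Proposition \ref{10thjune251}, use Proposition \ref{9thjan251} to obtain $\mathrm{Ran}(\psi_s) = \mathrm{Ker}(\wedge) = \mathrm{Ker}(\wedge_C)$, and then apply the two parts of Proposition \ref{7thjan251} (together with Lemma \ref{6thjan251}(iii) for the converse direction).
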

\begin{proof}
Let us assume that $\wedge \circ \nabla = d.$ We denote the Connes' calculus for the inclusion $\Omega^1 \subseteq \Omega^\bullet$ by  $(\Omega^\bullet_C, \wedge_C, d_C).$ Then by Proposition \ref{10thjune251}, we know that $(\Omega^\bullet_C, \wedge_C, d_C)$ coincides with   the calculus $\dc$ and therefore, $\wedge_C \circ \nabla = d_C.$ 

Moreover, 
$$ \text{Ker} ( \wedge_C ) = \text{Ker} ( \wedge ) = \text{Ran} ( \psi_s )$$
by virtue of  Proposition \ref{9thjan251}.
Hence, part $(i)$ in Proposition \ref{7thjan251} allows us to conclude that $(1 - \psi_s) \nabla = d_{\psi_s}.$

Similarly, if $(1 - \psi_s) \nabla = d_{\psi_s},$ then from Proposition \ref{9thjan251}, (iii) of Lemma \ref{6thjan251} is satisfied and hence by the second assertion of Proposition \ref{7thjan251}, we have $\wedge \circ \nabla = d.$
\end{proof}

\subsection{Levi-Civita connections}

Till now, we have compared the notions of metric-compatibility and torsionlessness of connections used in \cite{BeggsMajid:Leabh} and \cite{meslandrennie1}. Combining these results, we can now easily compare the definitions of Levi-Civita connections in the two frameworks. In \cite[Definition 4.1]{meslandrennie1}, Mesland and Rennie consider the class of Hermitian differential structures. The following definition is a weakened version.

\begin{definition} \label{13thjan251}
A quintuple $( \Omega^1, d^\prime, \dagger, \psi, {}_B \langle \langle ~ , ~ \rangle \rangle  )$ is said to be a weak Hermitian differential structure on a $\ast$-algebra $B$ if
\begin{enumerate}

\item[$(i)$] $( \Omega^1, d^\prime, \psi )$ is a weak second order differential structure,

\item[$(ii)$] The triple $(\Omega^1, \dagger, {}_B \langle \langle ~ , ~ \rangle \rangle)$ is a weak $\dagger$-bimodule satisfying the condition $ d^\prime ( b^\ast ) = - d^\prime ( b )^\dagger $ for all $b \in B.$
\end{enumerate}
A left connection $\nabla^\prime$ is said to be a Levi-Civita connection for $( \Omega^1, d^\prime, \dagger, \psi, {}_B \langle \langle ~ , ~ \rangle \rangle  )$ if $\nabla^\prime$ is Hermitian with respect to the triplet  $(\Omega^1, \dagger, {}_B \langle \langle ~ , ~ \rangle \rangle)$ (i.e, \eqref{10thjan255} holds), and if $\nabla^\prime$ is torsionless for the weak second order differential structure $(\Omega^1, d^\prime, \psi)$ in the sense of Definition \ref{9thjune251}.
\end{definition}

Starting from a $\ast$-differential calculus and a real metric on the space of one forms, one has the following example of a weak Hermitian differential structure. 

\begin{example}\rm \label{14thjune251}
Let $\dc$ be a $\ast$-differential calculus on a $\ast$-algebra $B$ and $s: \Omega^2 \rightarrow \Omega^1 \otimes_B \Omega^1 $ be a $B$-bilinear map such that $(\Omega^\bullet, \wedge, d, s)$ is a quasi-tame differential calculus. Moreover, let $\metric$ be a real metric on $\Omega^1.$

Consider the first order differential calculus $(\Omega^1, \sqrt{-1} d).$   If $\psi_s$ denotes the map defined in \eqref{eq:psi_S}, then by Proposition \ref{9thjan251}, it follows that $(\Omega^1, \sqrt{-1} d, \psi_s)$ is a weak second order differential structure. Moreover, the Hermitian metric $\kH_g$ constructed from $\metric$ in Proposition \ref{27thdec242jb} yields a weak $\dagger$-bimodule $(\Omega^1, \dagger_{\star_{\Omega^1}}, {}_B \langle \langle ~ , ~ \rangle \rangle )$ by virtue of Corollary \ref{10thjan2510}. The equation $(\sqrt{-1} d) (b^*) = - \dagger_{\star_{\Omega^1}} ( (\sqrt{-1} d) (b) ) $ holds because of \eqref{18thjune251}.  In other words, $(\Omega^1, \sqrt{-1} d, \dagger_{\star_{\Omega^1}}, \psi_s, {}_B \langle \langle ~ , ~ \rangle \rangle  )$ is a weak Hermitian differential structure. 
\end{example}

In the sequel, we will refer to the tuple $(\Omega^1, \sqrt{-1} d, \dagger_{\star_{\Omega^1}}, \psi_s, {}_B \langle \langle ~ , ~ \rangle \rangle  )$ as the weak Hermitian differential structure associated to the data $(\dc, s, \metric).$

Now, we are ready for one of the main result in this article.

\begin{theorem} \label{thm:levicomp}
Suppose that $( \Omega^\bullet, \wedge, d, s  )$ is a quasi-tame $\ast$-differential calculus, $\metric$ is  a real metric on $\Omega^1$ and $(\nabla, \sigma)$ a $\ast$-preserving bimodule connection on $\Omega^1$ such that $\sigma$ is an isomorphism. Then the following statements are equivalent: 
\begin{enumerate}
\item[$(i)$] $(\nabla, \sigma )$ is a (unique) Levi-Civita connection for the metric $\metric$ on the $\ast$-differential calculus $\dc$ in the sense of Definition \ref{4thjuly241}.
\item[$(ii)$] $\sqrt{-1}\nabla$ is a (unique) Levi-Civita connection for the associated weak Hermitian differential structure  in the sense of Definition \ref{13thjan251}.
\end{enumerate}
\end{theorem}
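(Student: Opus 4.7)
The plan is to decouple both Levi-Civita conditions into a metric-compatibility part and a torsion-zero part and to verify each equivalence separately, using Theorem \ref{9thmay252} for the former and Theorem \ref{14thjan251} for the latter, after a short verification that the Mesland-Rennie torsion-zero condition scales consistently under $d \mapsto \sqrt{-1}\, d$. Both sides of the desired equivalence unpack into two independent constraints on the connection, so this decoupling is legitimate.

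For the metric-compatibility half, I would apply Theorem \ref{9thmay252} directly: under the standing hypothesis that $(\nabla, \sigma)$ is a $\ast$-preserving bimodule connection with $\sigma$ an isomorphism, the Beggs-Majid compatibility $\nabla_{\Omega^1 \otimes_B \Omega^1} g = 0$ is equivalent to the Hermiticity of $\sqrt{-1}\,\nabla$ with respect to the weak $\dagger$-bimodule $(\Omega^1, \dagger_{\star_{\Omega^1}}, {}_B\langle\langle ~,~ \rangle\rangle)$ attached to $\kH_g$ via Corollary \ref{10thjan2510}.

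For the torsion half, the key computation is the identity $d'_{\psi_s} = \sqrt{-1}\, d_{\psi_s}$, where $d'_{\psi_s}$ denotes the map defined by \eqref{9thjune252} for the calculus $(\Omega^1, \sqrt{-1}\, d)$. The projection $\pi': \Omega^1_u \to \Omega^1$ attached to $\sqrt{-1}\,d$ equals $\sqrt{-1}\,\pi$, so $(\pi')^{\otimes 2} = -\pi^{\otimes 2}$ on $\Omega^2_u$, and any preimage of $\omega$ under $\pi'$ equals $-\sqrt{-1}$ times a preimage under $\pi$; these two sign factors combine to give $d'_{\psi_s} = \sqrt{-1}\, d_{\psi_s}$. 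Therefore the Mesland-Rennie torsion-zero condition $(1 - \psi_s)(\sqrt{-1}\,\nabla) = d'_{\psi_s}$ for $\sqrt{-1}\,\nabla$ collapses to $(1 - \psi_s)\nabla = d_{\psi_s}$, which by Theorem \ref{14thjan251} applied to the quasi-tame calculus $(\Omega^\bullet, \wedge, d, s)$ is equivalent to $\wedge \circ \nabla = d$, the torsion-zero condition of Definition \ref{9thjan252}.

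Combining these two equivalences yields the desired equivalence of (i) and (ii); for the parenthetical uniqueness clause, the bijection $\nabla \leftrightarrow \sqrt{-1}\,\nabla$ between left connections for $(\Omega^1, d)$ and $(\Omega^1, \sqrt{-1}\, d)$ preserves the bimodule property with the same $\sigma$, so uniqueness within the natural ambient class of $\ast$-preserving bimodule connections with $\sigma$ invertible transfers between the two frameworks. The main obstacle I anticipate is the careful bookkeeping of $\sqrt{-1}$ factors in the rescaling identity $d'_{\psi_s} = \sqrt{-1}\, d_{\psi_s}$; all remaining steps are direct applications of results already established in Sections \ref{21staugust251} and \ref{19thjune251}.
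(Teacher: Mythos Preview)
Your proposal is correct and follows essentially the same approach as the paper: decouple into the metric-compatibility part (handled by Theorem \ref{9thmay252}) and the torsion part (handled by Theorem \ref{14thjan251}). The only cosmetic difference is in the torsion step: the paper observes directly that $\wedge \circ \nabla = d$ iff $\wedge \circ (\sqrt{-1}\,\nabla) = \sqrt{-1}\,d$, and then applies Theorem \ref{14thjan251} to the rescaled quasi-tame calculus $(\Omega^\bullet, \wedge, \sqrt{-1}\,d, s)$, thereby bypassing your explicit verification that $d'_{\psi_s} = \sqrt{-1}\, d_{\psi_s}$; your computation is correct but unnecessary once one notes that $(\Omega^\bullet, \wedge, \sqrt{-1}\,d, s)$ is itself quasi-tame with the same $\psi_s$.
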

\begin{proof}
Suppose that $(i)$ holds. Then, by Theorem \ref{9thmay252}, the equation \eqref{10thjan255} holds with $d^\prime = \sqrt{-1} d$ and $\nabla^\prime = \sqrt{-1} \nabla. $  Moreover, since   $\nabla$ is torsionless for the differential calculus $\dc,$   $\sqrt{-1} \nabla $ is a torsionless connection for the differential calculus $(\Omega^\bullet, \wedge, \sqrt{-1} d ).$ Therefore, by virtue of Theorem \ref{14thjan251}, we get that $(1 - \psi_s) ( \sqrt{-1}  \nabla ) = ( \sqrt{-1}  d)_{\psi_s}. $

The proof of the reverse implication and the uniqueness follows similarly.
\end{proof}

\section{On examples coming from cocycle deformations and Heckenberger-Kolb calculi} \label{21staugust254}

In this section, we prove that for cocycle deformations of a large class of Riemannian manifolds and the Heckenberger-Kolb calculi on quantized irreducible flag manifolds, there is a unique Levi-Civita connection in the sense of \cite{meslandrennie1} for every real covariant metric on the space of one-forms. This follows by applying Theorem \ref{thm:levicomp} to \cite[Proposition 9.28]{BeggsMajid:Leabh} and \cite[Theorem 6.14]{LeviCivitaHK}. Indeed, for both of these class of examples, we verify the existence of a covariant splitting of the $\wedge$ map and the $\ast$-preserving condition for the Levi-Civita connections obtained in \cite{BeggsMajid:Leabh,LeviCivitaHK}.

Let us begin by setting up the notations and conventions for Hopf algebras and their coactions. We refer to \cite{montgomery1993hopf} for more details on Hopf algebras and their coactions.

Let $A$ be a Hopf-algebra with coproduct $\Delta: A\to A\otimes A$, counit $\epsilon: A\to \mathbb{C}$ and antipode $S:A\to A$. The antipode is always assumed to be bijective and moreover, we will use Sweedler notation to write
\[
\Delta (a) = a_{(1)} \otimes a_{(2)}.
\]
For a Hopf algebra $A$ and a left $A$-comodule $V$, the coaction on an element
$v\in V$ is written in Sweedler notation as
$$\del{V}(v) = \mone{v}\ot\zero{v}.$$
An element $v \in V$ is called coinvariant if $\del{V}(v) = 1 \otimes v. $
We denote by $\qMod{A}{}{}{}$ the monoidal category of left $A$-comodules.

If $(B, \prescript{B}{}{\delta})$ is a left $A$-comodule algebra, then  $\qMod{A}{B}{}{B}$ will denote  the category of relative Hopf-modules. Thus,  an object $(\cE, \del{\cE})$ of $\cat$ is a $B$-bimodule and a left $A$-comodule such that
$$\del{\cE}(aeb)= \del{B}(a)\del{\cE}(e) \del{B}(c) \quad\text{for all } e \in \cE, a, b\in B .$$
A morphism $f: \cE \to \cF $  of the category $\cat$ is a $B$-bilinear map which is also $A$-covariant, i.e., 
$$\del{\cF} (f (e) )= (\id \ot f )\del{\cE} (e) $$
for all $e \in \cE.$ Similarly, the category $\qMod{A}{}{}{B}$ consists of right $B$-modules with compatible left $A$-coactions and the  morphisms are left $A$-covariant maps which are also right $B$-linear. 

Let us also recall that a Hopf algebra $(A, \Delta)$ is called a Hopf $\ast$-algebra if $A$ is a $\ast$-algebra and  $\Delta$ is a $\ast$-homomorphism. If $A$ is a Hopf $\ast$-algebra, a left $A$-comodule algebra $(B, \prescript{B}{}{\delta})$ is called a left $A$-comodule $\ast$-algebra if $B$ is a $\ast$-algebra and the coaction $\prescript{B}{}{\delta}$ satisfies the identity  
$$\prescript{B}{}{\delta} ( b^* ) = b^*_{(-1)} \otimes b^*_{(0)}. $$
In this case, a differential $*$-calculus $\dc$ on $B$ is said to be $A$-covariant if for all $k \geq 0,$ the bimodule $\form{k}$ is an  object of $\qMod{A}{B}{}{B},$ the map $\wedge$ is a morphism of $\cat$ and  $d: \Omega^k \rightarrow \Omega^{k + 1} $ is $A$-covariant. 

In the next two subsections, we will be dealing with covariant differential calculi and therefore, we introduce the following definition:
\begin{definition}
    An $A$-covariant  differential calculus $\dc$ is said to be covariantly quasi-tame if there exists a morphism $s: \Omega^2 \rightarrow \Omega^1 \otimes_B \Omega^1 $ in $\cat$ such that $\wedge \circ s = \id_{\Omega^2}.$ 
\end{definition}

Proposition \ref{rem:covsplit} gives a class of examples of covariantly quasi-tame differential calculi. In order to state the result, we quickly recall a few facts about quantum homogeneous spaces which will be needed in Subsection \ref{11thaugust251}. 

Let $A, H$ be Hopf-algebras and $\pi:A\to H$ be a surjective Hopf-algebra homomorphism. Consider the homogeneous right $H$-coaction on $A$  given by $\delta^A =(\id\otimes \pi)\circ \Delta$ and let 
$$B= A^{\text{co}(H)} := \{ a\in A: \delta^A (a) = a\otimes 1\}$$
be the coinvariant subalgebra.
We say that $B$ is a quantum homogeneous space if $A$ is faithfully flat as a right $B$-module.

Let $B^+$ denote $B \cap \ker (\epsilon). $ If $M$ is an object of $\qMod{A}{B}{}{B}$, then the map
$$  \frac{M}{B^+ M} \rightarrow H \otimes  \frac{M}{B^+ M}, ~  [ m ]  \mapsto \pi (m_{(- 1)}) \otimes [ m_{(0)} ] $$
is a coaction of $H$ on $ \frac{M}{B^+ M}$,
where $[ m ]$ denotes the equivalence class of $m \in M$ in $ \frac{M}{B^+ M}$. Thus, $ \frac{M}{B^+ M}$ is an object of the category  $\qMod{H}{}{}{B}$ with respect to the obvious right $B$-module structure  and we have a functor
\begin{equation} \label{2ndjuly241}
 \Phi: \qMod{A}{B}{}{B} \rightarrow \qMod{H}{}{}{B}, ~ M \rightarrow  \frac{M}{B^+ M}.
 \end{equation}

We define $ \modz{A}{B} $ to be the full subcategory of $ \qMod{A}{B}{}{B} $  whose objects $M$  are finitely generated as left $B$-modules and satisfy the condition $M B^+ = B^+ M$. It is easy to observe (see \cite[Remark 2.7]{LeviCivitaHK}) that any object of $ \modz{A}{B} $ is automatically projective as a left $B$-module.  Moreover, $\lmod{H}{}$ will denote the category of finite dimensional left $H$-comodules.

It is well-known (see \cite[Theorem 2]{Tak},  \cite[Section 6]{Skryabin2007}) that  the functor
\[
  \Phi: \modz{A}{B} \rightarrow \lmod{H}{}
\]
is a monoidal equivalence of categories. We will refer to $\Phi$ as Takeuchi's equivalence.

\begin{prop}  \label{rem:covsplit}
     Suppose that $B$ is a quantum homogeneous space of a Hopf algebra $A$ as above and $H$ is cosemisimple. If $\dc$ is an $A$-covariant differential calculus on $B$ such that $\Omega^k$ are objects of $ \modz{A}{B}, $ then $\dc$ is covariantly quasi-tame.
\end{prop}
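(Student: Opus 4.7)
My plan is to invoke Takeuchi's monoidal equivalence $\Phi: \modz{A}{B} \to \lmod{H}{}$ together with the cosemisimplicity of $H$ to produce a covariant splitting. First, I would observe that $\wedge: \Omega^1 \otimes_B \Omega^1 \to \Omega^2$ is a surjective morphism in $\cat$, since $\Omega^\bullet$ is generated as a differential graded algebra by $B$ and $dB$. Because the hypotheses give $\Omega^1, \Omega^2 \in \modz{A}{B}$ and $\modz{A}{B}$ is a monoidal subcategory of $\cat$ (a consequence of Takeuchi's equivalence being monoidal), the balanced tensor product $\Omega^1 \otimes_B \Omega^1$ also lies in $\modz{A}{B}$, so $\wedge$ is an epimorphism inside $\modz{A}{B}$.

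Next, I would apply $\Phi$ to obtain a surjective morphism $\Phi(\wedge): \Phi(\Omega^1 \otimes_B \Omega^1) \to \Phi(\Omega^2)$ of finite-dimensional left $H$-comodules. Since $H$ is cosemisimple, the category $\lmod{H}{}$ is semisimple, so this epimorphism splits: there is a morphism $t: \Phi(\Omega^2) \to \Phi(\Omega^1 \otimes_B \Omega^1)$ in $\lmod{H}{}$ with $\Phi(\wedge) \circ t = \id_{\Phi(\Omega^2)}$. By the fullness and essential surjectivity of $\Phi$, I lift $t$ to a morphism $s: \Omega^2 \to \Omega^1 \otimes_B \Omega^1$ in $\modz{A}{B}$. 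The identity $\Phi(\wedge \circ s) = \Phi(\wedge) \circ t = \id_{\Phi(\Omega^2)} = \Phi(\id_{\Omega^2})$ combined with the faithfulness of $\Phi$ then forces $\wedge \circ s = \id_{\Omega^2}$. Since $s$ is a morphism in $\modz{A}{B} \subseteq \cat$, it is automatically $B$-bilinear and $A$-covariant, providing the required splitting.

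The only point requiring care is that $\modz{A}{B}$ is closed under the balanced tensor product, so that $\Omega^1 \otimes_B \Omega^1$ and the map $\wedge$ indeed live inside the subcategory on which $\Phi$ is an equivalence; once this is granted, the argument reduces to the elementary fact that epimorphisms of finite-dimensional comodules over a cosemisimple Hopf algebra split, transported across $\Phi$. I do not anticipate any substantial obstacle beyond this categorical bookkeeping.
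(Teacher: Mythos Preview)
Your proposal is correct and follows essentially the same route as the paper: transport the surjection $\wedge$ across Takeuchi's equivalence $\Phi$, split it in $\lmod{H}{}$ using cosemisimplicity of $H$, and lift the splitting back. The only cosmetic difference is that the paper writes out the splitting explicitly by averaging a linear section against the Haar integral of $H$, whereas you invoke the semisimplicity of $\lmod{H}{}$ directly; these are the same argument, since the averaging is precisely the Maschke-type proof of that semisimplicity.
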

\begin{proof}
The result was already observed in the proof of \cite[Proposition 4.1]{junaidbuachalla1}. However, we prove it for the sake of completeness.  

Since $ \Phi ( \Omega^1 \otimes_B \Omega^1 ) \xrightarrow{\Phi ( \wedge )} \Phi ( \Omega^2 )$ is a surjective linear map between finite dimensional vector spaces, there exists a linear map
$T: \Phi (\Omega^2) \rightarrow \Phi ( \Omega^1 \otimes_B \Omega^1 ) $ such that $\Phi ( \wedge ) \circ T = \text{id}_{\Phi ( \Omega^2 )}. $ Let $h$ denote the invariant integral of $H.$ Then it can be easily checked that the map
$$ T^\prime: \Phi (\Omega^2) \rightarrow \Phi ( \Omega^1 \otimes_B \Omega^1 ) ~ \text{defined by} ~ T^\prime ( v ) = h ( S ( v_{(-1)}  ) [T ( v_{(0)})]_{(-1)}  ) [T ( v_{(0)})]_{(0)}  $$
is a morphism in $\lmod{H}{}$ such that $ \Phi ( \wedge ) \circ T^\prime = \text{id}_{\Phi ( \Omega^2 )}. $

As $\Phi$ is an equivalence of categories, we have a morphism $s: \Omega^2 \rightarrow \Omega^1 \otimes_B \Omega^1  $ in $\modz{A}{B}$ with $\Phi ( s ) = T^\prime $ and $\wedge \circ s = \text{id}_{\Omega^2}.$
 \end{proof}

\subsection{The case of unitary cocycle deformations}

  We recall that if $A$ is a Hopf algebra, 
then a  convolution invertible map $\gamma:A \otimes A \ra \mathbb{C}$ is called a  \textbf{$2$-cocycle} if	
	\begin{equation*}
		\label{25thaug24} \gamma({{g}_{(1)}}\otimes {\one{h}}) \co{\two{g}\two{h}}{k} =  \co{\one{h}}{\one{k}} \co{g}{\two{h}\two{k}}   
	\end{equation*}
	for all $g,h, k \in A $ and   
	$\gamma({h\otimes1})= \epsilon(h) = \gamma({1\otimes h})$ for all  $ h\in A $. If $A$ is a Hopf $\ast$-algebra, then $\gamma$ is said to be unitary (see \cite[Definition 4.4]{SadeDeformSpecTrip}) if 
    $$\overline{\co{a}{b}}= \coin{S(a)^*}{S(b)^*}.$$

Throughout this subsection, $A$ will denote a Hopf $*$-algebra, $B$ an $A$-comodule $*$-algebra, $\gamma$ a  unitary 2-cocycle on $A$.

If $\gamma$ is a unitary $2$-cocycle on $A,$ then it is well-known (see \cite[Subsection 2.3]{SMFounds}) that there exists a Hopf $\ast$-algebra $(A_\gamma,\cdot_\gamma, \Delta, \epsilon_\gamma, S_\gamma, *_\gamma)$ isomorphic to $A$ as a coalgebra. We will denote this $\gamma$-deformed Hopf $\ast$-algebra by the symbol $A_\gamma.$  We refer to \cite[Subsection 2.3]{SMFounds} and \cite[Subsection 4.2]{SadeDeformSpecTrip} for the formulas of the structure maps.

In the presence of a unitary cocycle $\gamma,$ $A$-comodule $\ast$-algebras and objects of $\cat$ can also be deformed. Concretely, if $B$ is an $A$-comodule $\ast$-algebra, then we have an $A_\gamma$-comodule $\ast$-algebra $B_\gamma$ with multiplication and involution defined as 
 $$ a\cdot_\cot b = \co{\mone{a}}{\mone{b}} \zero{a}\zero{b}, ~ b^{*_\gamma}:= \bar{V}(\mone{b}^*) ~ \zero{b}^*.$$
Here, as in \cite[Subsection 4.2]{SadeDeformSpecTrip}, $\overline{V}: A \rightarrow \mathbb{C} $ is the map defined as 
\begin{equation} \label{12thaugust251}
\overline{V} (k) = \overline{\gamma} \big( k_{(2)} \otimes S^{-1} (k_{(1)})\big). 
\end{equation}
 
The cocycle deformation of an object $(M, \del{M})$  of $\cat$  is the pair  $( \Gamma ( M ), \del{M_\gamma})$ where $\Gamma ( M ) = M $ as a comodule equipped with $B_\gamma$-bimodule structures given by
\begin{align*}
	b\cdot_\cot m = \co{\mone{b}}{\mone{m}}\zero{b} \cdot \zero{m},	\quad m\cdot_\cot b =\co{\mone{m}}{\mone{b}} \zero{m}\cdot \zero{b}.
\end{align*}
 Then $\tcat$ is a monoidal category with the monoidal structure to be denoted by $\ot_{B_\gamma}$.
It is well-known that the canonical functor   
$	\Gamma: \cat \to \tcat $
 is a  monoidal equivalence.  
 The associated natural isomorphism  $\varphi$ between the functors $\ot_{B_\gamma} \circ(\Gamma\times \Gamma)$ and $\Gamma \circ \ot_B$  is given by
\begin{eqnarray*}\label{nt}
	\varphi_{V,W}: \Gamma(V) \ot_{B_\gamma} \Gamma(W) &\longrightarrow&  \Gamma(V \ot_B W),
	\\
	v \ot_{B_\gamma} w &\longmapsto & \co{\mone{v}}{\mone{w}}  \zero{v} \ot \zero{w}  , \nonumber
\end{eqnarray*}
for  objects $V,W$ in $\cat$. 
Let us also note that since $\varphi$ is a natural isomorphism, we have the equality
\begin{equation}\label{15thdec244}
	\left(\Gamma(f) \ot_{B_\cot} \Gamma(g)\right) \circ \varphi_{V_1,W_1}^{-1}= \varphi^{-1}_{V_2, W_2} \circ \Gamma(f \ot_B g)
\end{equation}
for morphisms $f \in \hm{}{V_1}{V_2},  g\in \hm{}{W_1}{W_2} $ in the category $\cat$.
Finally, for objects  $\cE_1, \cE_2, \cF_1, \cF_2$ in $\cat$ and morphisms $T: \cE_1 \ot_{B} \cE_2 \to \cF_1\ot_{B} \cF_2$ and $S: \cE_1 \ot_{B}\cE_2 \to B $,  define morphisms in $\tcat$
$$ T_\gamma: \Gamma(\cE_1)\ot_{B_\cot}\Gamma(\cE_2) \to \Gamma(\cF_1)\ot_{B_\cot}\Gamma(\cF_2)  \text{ and } S_\gamma: \Gamma(\cE_1)\ot_{B_\cot}\Gamma(\cE_2) \to B_\gamma $$
as
\begin{equation} \label{1stdec241jb}
T_\gamma:= \varphi_{\cF_1, \cF_2}^{-1}\circ \Gamma(T) \circ \varphi_{\cE_1, \cE_2} \text{ and } S_\gamma := \Gamma(S) \circ \varphi_{\cE_1, \cE_2}.
\end{equation}


Therefore, if $\dc$ is an $A$-covariant differential calculus on $B,$ the above discussion and  \eqref{1stdec241jb} allows us to make the following definitions:
$$ \form{k}_\gamma:= \Gamma(\form{k}), \form{\bullet}_\gamma:= \oplus_{k \geq 0} \form{k}_\gamma ~ \text{and} ~ \wedge_\gamma: \Omega^\bullet_\gamma \otimes_{B_\gamma} \Omega^\bullet_\gamma \rightarrow \Omega^\bullet_\gamma. $$
Moreover,  since $\Gamma$ is also a monoidal  equivalence between the categories $\qMod{A}{}{}{}$ and $\qMod{A_\gamma}{}{}{},$ we can define  
\begin{equation} \label{7thjuly251}
  d_\gamma:= \Gamma(d) ~ \text{and} ~   \nabla_\gamma= \varphi^{-1}_{\Omega^1, \cE}\circ \Gamma( \nabla)
\end{equation}
for an $A$-covariant connection $\nabla$ on an object $\cE$ in $\cat.$

The following result is well-known to the experts. We refer to \cite[Proposition 3.9]{2ndpaper} for a proof. 

\begin{prop}  \label{prop:5thdec241}
	If  $(\Omega^\bullet, \wedge, d)$ is an $A$-covariant differential $\ast$-calculus on $B,$ then the unitary 2-cocycle deformation 	$(\Omega^\bullet_\gamma, \wedge_\cot, d_\gamma)$ is an $A_\gamma$-covariant differential $*$-calculus on $B_\gamma$.
\end{prop}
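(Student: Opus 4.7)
The proof has two components: first transfer the graded algebra and differential structure from $(\Omega^\bullet,\wedge,d)$ to $(\Omega^\bullet_\gamma,\wedge_\gamma,d_\gamma)$ via the monoidal equivalence $\Gamma$, and second endow $\Omega^\bullet_\gamma$ with a twisted $\ast$-structure compatible with $\wedge_\gamma$ and $d_\gamma$. The underlying vector space of $\Omega^k_\gamma$ is $\Omega^k$; only the $B_\gamma$-bimodule and multiplicative structures are new. Since $\Gamma:\cat\to\tcat$ is a monoidal equivalence (natural isomorphism $\varphi$) and $\Gamma$ is also monoidally equivalent between $\qMod{A}{}{}{}$ and $\qMod{A_\gamma}{}{}{}$, covariance under $A_\gamma$ of all the structure maps is automatic.

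For step one, I would take $\wedge_\gamma:=\wedge_\gamma$ and $d_\gamma:=\Gamma(d)$ as defined in \eqref{1stdec241jb} and \eqref{7thjuly251}. Associativity of $\wedge_\gamma$, the graded Leibniz rule and $d_\gamma^2=0$ all reduce to the corresponding identities for $\wedge$ and $d$ via the naturality equation \eqref{15thdec244}; explicitly, for $\omega\in\Omega^k$ and $\nu\in\Omega^l$ one verifies
\begin{align*}
 d_\gamma(\omega\wedge_\gamma\nu)
  &=\gamma\big(\omega_{(-1)},\nu_{(-1)}\big)\,d\big(\omega_{(0)}\wedge\nu_{(0)}\big)\\
  &=d_\gamma(\omega)\wedge_\gamma\nu+(-1)^k\,\omega\wedge_\gamma d_\gamma(\nu),
\end{align*}
using $A$-covariance of $d$ to pull the coinvariant scalar out. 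The generation of $\Omega^\bullet_\gamma$ as a $B_\gamma$-algebra by $d_\gamma(B_\gamma)$ follows since any $\omega=b_0\,db_1\wedge\cdots\wedge db_k\in\Omega^k$ can, by iterated use of $b\cdot_\gamma m=\gamma(b_{(-1)},m_{(-1)})\,b_{(0)}m_{(0)}$ and its inverse $\bar\gamma$, be rewritten in the form $b_0\cdot_\gamma d_\gamma(b_1)\wedge_\gamma\cdots\wedge_\gamma d_\gamma(b_k)$ plus lower-degree terms.

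For step two, the crucial definition is, for $\omega\in\Omega^k$,
\[
 \omega^{\ast_\gamma}\;:=\;\overline V\!\big(\omega^{\ast}_{\,(-1)}\big)\,\omega^{\ast}_{\,(0)},
\]
mirroring the formula $b^{\ast_\gamma}=\overline V(b^{\ast}_{(-1)})\,b^{\ast}_{(0)}$ on $B_\gamma$ with the map $\overline V$ from \eqref{12thaugust251}. Antilinearity is obvious, and involutivity $(\omega^{\ast_\gamma})^{\ast_\gamma}=\omega$ is the same identity for $\overline V$ used on $B_\gamma$. That $\ast_\gamma$ preserves $\Omega^k_\gamma$ is clear since the underlying vector spaces agree. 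Compatibility with $d_\gamma$ is a short computation: because $\delta^{\Omega^1}\circ d=(\id\otimes d)\circ\delta^B$ and $\overline V(\omega^\ast_{(-1)})\in\mathbb C$,
\[
(d_\gamma b)^{\ast_\gamma}=\overline V\!\big(b^{\ast}_{(-1)}\big)\,d\big(b^{\ast}_{(0)}\big)
=d\Big(\overline V\!\big(b^{\ast}_{(-1)}\big)\,b^{\ast}_{(0)}\Big)=d_\gamma(b^{\ast_\gamma}),
\]
and the analogous check works in higher degrees.

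The main obstacle is the graded-antimultiplicativity
\[
(\omega\wedge_\gamma\nu)^{\ast_\gamma}=(-1)^{kl}\,\nu^{\ast_\gamma}\wedge_\gamma\omega^{\ast_\gamma},
\]
because this is where both the $2$-cocycle identity and the unitarity $\overline{\gamma(a,b)}=\bar\gamma(S(a)^\ast,S(b)^\ast)$ enter. Expanding the left-hand side using the definitions of $\wedge_\gamma$ and $\ast_\gamma$, together with $\delta(\omega\wedge\nu)=\omega_{(-1)}\nu_{(-1)}\otimes\omega_{(0)}\wedge\nu_{(0)}$ and $(\omega\wedge\nu)^\ast=(-1)^{kl}\nu^\ast\wedge\omega^\ast$, yields a scalar factor of the form
\[
\overline{\gamma\big(\omega_{(-1)},\nu_{(-1)}\big)}\;\overline V\!\big((\omega_{(-1)}\nu_{(-1)})^\ast\big);
\]
expanding the right-hand side gives
\[
\overline V\!\big(\omega^{\ast}_{(-1)}\big)\,\overline V\!\big(\nu^{\ast}_{(-1)}\big)\;\gamma\!\big(\nu^{\ast}_{(0)(-1)},\omega^{\ast}_{(0)(-1)}\big).
\]
A direct manipulation of these two expressions, using the cocycle identity to re-bracket the products and the unitarity of $\gamma$ to convert $\overline\gamma$-factors into $\gamma$-factors of starred arguments, reduces both sides to a common form; this is the only place where serious cocycle algebra is required. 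Once this identity is in hand, every requirement in Definition~\ref{9thmay253} is verified, and $A_\gamma$-covariance of $\wedge_\gamma$, $d_\gamma$ and $\ast_\gamma$ is already built into the construction, completing the proof.
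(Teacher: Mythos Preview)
The paper does not actually supply a proof of this proposition: it simply records that the result is well-known and refers to \cite[Proposition 3.9]{2ndpaper}. So there is no in-paper argument to compare against.

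Your outline is the standard route and is essentially correct. The transfer of associativity, the graded Leibniz rule, and $d_\gamma^2=0$ through the monoidal equivalence $\Gamma$ via \eqref{15thdec244} is exactly how one expects to proceed, and your formula $\omega^{\ast_\gamma}=\overline V(\omega^\ast_{(-1)})\,\omega^\ast_{(0)}$ is the right definition, being the canonical extension of the $B_\gamma$-involution to the comodule $\Omega^k$. Two small remarks: in the generation step there are no ``lower-degree terms'' --- since $\gamma$ is convolution-invertible, the span of $b_0\,db_1\wedge\cdots\wedge db_k$ coincides on the nose with the span of $c_0\cdot_\gamma d_\gamma c_1\wedge_\gamma\cdots\wedge_\gamma d_\gamma c_k$; and in the $\ast$-computation you are implicitly using that the coaction on each $\Omega^k$ is compatible with $\ast$ (i.e.\ $\delta(\omega^\ast)=\omega_{(-1)}^\ast\otimes\omega_{(0)}^\ast$), which follows from covariance of $d$ together with the comodule $\ast$-algebra condition on $B$, but is worth stating.

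The only part you leave genuinely incomplete is the graded antimultiplicativity $(\omega\wedge_\gamma\nu)^{\ast_\gamma}=(-1)^{kl}\nu^{\ast_\gamma}\wedge_\gamma\omega^{\ast_\gamma}$, and you correctly identify that this is precisely where unitarity of $\gamma$ and the $2$-cocycle identity do the work. Since this is the one nontrivial verification, an examiner would expect at least one more line: after inserting $\overline{\gamma(a,b)}=\bar\gamma(S(a)^\ast,S(b)^\ast)$ and the definition of $\overline V$, the comparison reduces to a convolution identity among $\gamma$, $\bar\gamma$ and the antipode that is exactly the computation establishing antimultiplicativity of $\ast_\gamma$ on $A_\gamma$ (or $B_\gamma$) itself --- so you may either carry it out once or invoke that the same identity already proves $(a\cdot_\gamma b)^{\ast_\gamma}=b^{\ast_\gamma}\cdot_\gamma a^{\ast_\gamma}$ on $B_\gamma$, the graded sign coming solely from $(\omega\wedge\nu)^\ast=(-1)^{kl}\nu^\ast\wedge\omega^\ast$ in the undeformed calculus.
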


Now, we introduce the $\ast$-differential calculus to which we want to apply Theorem \ref{thm:levicomp}. The results in Example \ref{15thdec24jb1} are well-known to the experts. However, for the sake of clarity, we borrow notations from \cite[Subsection 6.1]{2ndpaper} for the explanations. Let $ X ( \mathbb{R} )  $ denote the set of real points of a real smooth affine variety $X = \text{Spec} ( O_{\mathbb{R}} (X)  ).  $ We assume that $ X ( \mathbb{R} )  $ is non-empty. Then it is well-known that $ X ( \mathbb{R} ) $ is a smooth manifold and the $O_{\mathbb{R}} (X) $-module 
$$\Omega^1_{\mathbb{R}} (X ( \mathbb{R} )):= \text{Span} \{ b_0 db_1 : b_0, b_1 \in O_{\mathbb{R}} (X) \} $$
is finitely generated and projective. 
We define $ O_{\mathbb{C}} (X):= O_{\mathbb{R}} (X) \otimes_{\mathbb{R}} \mathbb{C} $ endowed with the involution 
$$ (f \otimes_{\mathbb{R}} \lambda )^\ast:= f \otimes_{\mathbb{R}} \overline{\lambda} $$
for $f \in  O_{\mathbb{R}} (X)$ and $\lambda \in \mathbb{C}. $ If $\wedge$ denotes the classical wedge map,  $d$ the de Rham differential and
$$ \Omega^k ( X (  \mathbb{R} )  ):= \text{Span} \{ b_0 db_1 \wedge \cdots \wedge db_k: b_i \in O_{\mathbb{C}} (X) \}, $$
then as observed in \cite[Subsection 6.1]{2ndpaper}, $(\Omega^{\bullet} ( X (  \mathbb{R} )  ), \wedge, d)$ is a $\ast$-differential calculus over the complex algebra  $O_{\mathbb{C}} (X).$

Likewise, if $ G ( \mathbb{R} )$ is the set of all real points of a  real smooth affine algebraic group $G = \text{Spec} ( O_{\mathbb{R}} (G)  )$ and  $O_{\mathbb{C}} (G):= O_{\mathbb{R}} (G) \otimes_{\mathbb{R}} \mathbb{C}, $ then  $O_{\mathbb{C}} (G)$ is a complex Hopf $\ast$-algebra. Furthermore, if there exists an action of varieties $G \times X \rightarrow X $ (see \cite[Subsection 3.1]{waterhouse}), then $(\Omega^\bullet (X (\mathbb{R}) ), \wedge, d)$ is an $O_{\mathbb{C}} (G) $-covariant $\ast$-differential calculus on $B:=O_{\mathbb{C}} (X).$

\begin{example} \label{15thdec24jb1}
Suppose that $G$ and $X$ are as above and   we have  an action of varieties $G \times X \rightarrow X.$ Let  $\widetilde{g_{\mathbb{R}}}: \Omega^1_{\mathbb{R}} ( X ( \mathbb{R} ) ) \otimes_{O_{\mathbb{R}} ( X )} \Omega^1_{\mathbb{R}} ( X ( \mathbb{R} ) ) \rightarrow O_{\mathbb{R}} ( X ) $ be a $G ( \mathbb{R} ) $-invariant Riemannian metric on $\Omega^1_{\mathbb{R}} ( X ( \mathbb{R} ) ).$ Then the following statements hold:
\begin{enumerate}
\item The map  $\widetilde{g_{\mathbb{R}}}$ gives rise to a canonical $O_{\mathbb{C}} (G) $-covariant real metric $\metric$ on $\Omega^1 ( X ( \mathbb{R} )  ) $ in the sense of Definition \ref{4thmay242}. 

\item The Levi-Civita connection $\nabla_{\mathbb{R}}$ on $\Omega^1_{\mathbb{R}} ( X ( \mathbb{R} ) ) $ for the metric   $\widetilde{g_{\mathbb{R}}} $ extends to the   unique  $O_{\mathbb{C}} ( G )$-covariant connection $\nabla$ on $\Omega^1 ( X ( \mathbb{R} ) )$ which is torsionless and compatible with the metric $(g, ( ~, ~ ) ).$
\end{enumerate}
Thus, if $\gamma$ is a unitary $2$-cocycle on the Hopf $\ast$-algebra $A:= O_{\mathbb{C}} ( G ), $ then by Proposition \ref{prop:5thdec241}, we have a deformed $A_\gamma$-covariant $\ast$-differential calculus, to be denoted by  $(\Omega^\bullet_\gamma, \wedge_\cot, d_\gamma).$
\end{example}
We refer to \cite[Subsection 6.1]{2ndpaper} for more details. 
 Now we start to verify the hypotheses of Theorem \ref{thm:levicomp} for the $\ast$-differential calculus  $(\Omega^\bullet_\gamma, \wedge_\cot, d_\gamma).$

\begin{prop}\label{prop:classtame}
The cocycle twisted calculus $\dctwisted$ of   Example \ref{15thdec24jb1} is covariantly  quasi-tame.
\end{prop}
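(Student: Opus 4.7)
The plan is to obtain the covariant splitting on $\dctwisted$ by transporting the classical antisymmetrization map through the monoidal equivalence $\Gamma: \cat \to \tcat$. First I would verify that the undeformed calculus $\dc$ on $B = O_{\mathbb{C}}(X)$ is covariantly quasi-tame. The module $\Omega^1(X(\mathbb{R}))$ is finitely generated and projective as a left $B$-module: this follows from Swan's theorem applied to the smooth manifold $X(\mathbb{R})$, extended by complexification. The classical antisymmetrization
\[
s: \Omega^2 \to \Omega^1 \otimes_B \Omega^1, \qquad s(\omega \wedge \eta) = \tfrac{1}{2}\bigl(\omega \otimes_B \eta - \eta \otimes_B \omega\bigr),
\]
extended by $B$-bilinearity, is well defined and satisfies $\wedge \circ s = \mathrm{id}_{\Omega^2}$. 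Because $\wedge$ and the flip are manifestly $G(\mathbb{R})$-equivariant on the real points, after complexification $s$ becomes an $A$-covariant morphism in $\cat$.

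Next I would transport $s$ through the cocycle twist. From the definition of the twisted differential graded algebra $(\Omega^\bullet_\gamma, \wedge_\gamma, d_\gamma)$ used in Proposition \ref{prop:5thdec241} (see also \cite{2ndpaper}), one has the identification
\[
\wedge_\gamma = \Gamma(\wedge) \circ \varphi_{\Omega^1, \Omega^1}: \Omega^1_\gamma \otimes_{B_\gamma} \Omega^1_\gamma \longrightarrow \Omega^2_\gamma,
\]
since $\wedge: \Omega^1 \otimes_B \Omega^1 \to \Omega^2$ is a morphism in $\cat$. Mirroring the formula \eqref{1stdec241jb}, I would define
\[
s_\gamma := \varphi^{-1}_{\Omega^1, \Omega^1} \circ \Gamma(s): \Omega^2_\gamma \longrightarrow \Omega^1_\gamma \otimes_{B_\gamma} \Omega^1_\gamma,
\]
which is a morphism in $\tcat$. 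Functoriality of $\Gamma$ and naturality of $\varphi$ then give
\[
\wedge_\gamma \circ s_\gamma = \Gamma(\wedge) \circ \varphi_{\Omega^1, \Omega^1} \circ \varphi^{-1}_{\Omega^1, \Omega^1} \circ \Gamma(s) = \Gamma(\wedge \circ s) = \Gamma(\mathrm{id}_{\Omega^2}) = \mathrm{id}_{\Omega^2_\gamma},
\]
so $s_\gamma$ is the desired covariant splitting.

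Finally I would record that $\Omega^1_\gamma$ is still finitely generated and projective as a left $B_\gamma$-module. Being finitely generated projective as a left $B$-module, $\Omega^1$ admits a left dual in $\catmodbb$; since $\Gamma$ is a monoidal equivalence, it preserves duality data, so $\Omega^1_\gamma = \Gamma(\Omega^1)$ admits a left dual in $\qMod{A_\gamma}{B_\gamma}{}{B_\gamma}$ and hence is finitely generated and projective as a left $B_\gamma$-module.

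The only mildly delicate step is pinning down the precise identification $\wedge_\gamma = \Gamma(\wedge) \circ \varphi_{\Omega^1, \Omega^1}$; once this is in hand (which is standard for cocycle-twisted graded algebras and is explicitly recorded in \cite{2ndpaper, SadeDeformSpecTrip}), the remainder of the argument is a routine transport of the classical antisymmetrization, and no further analytic or algebraic input is needed.
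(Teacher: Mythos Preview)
Your proposal is correct and follows essentially the same route as the paper: define the classical antisymmetrization $s(\omega\wedge\eta)=\tfrac12(\omega\otimes_B\eta-\eta\otimes_B\omega)$, observe it is $A$-covariant (the paper phrases this as ``since $A$ is a commutative Hopf algebra''), transport it via $s_\gamma:=\varphi^{-1}_{\Omega^1,\Omega^1}\circ\Gamma(s)$, and verify $\wedge_\gamma\circ s_\gamma=\mathrm{id}$ by the same cancellation $\Gamma(\wedge)\varphi\,\varphi^{-1}\Gamma(s)=\Gamma(\mathrm{id})$. Your justification of finite generation and projectivity of $\Omega^1_\gamma$ via preservation of duals under the monoidal equivalence is slightly more explicit than the paper's one-line appeal to $\Gamma$ being an equivalence, but the content is the same.
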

\begin{proof}
In this proof we let $A:= O_\bbC(X)$ and $B:=O_\bbC(X)$.
Since $\Omega^1$ is finitely generated and projective as a left $B$-module and $\Gamma$ is an equivalence of categories, $\Omega^1_\gamma$ is finitely generated and projective as a left $B_\gamma$-module.

The canonical splitting of \eqref{4thjan251} for the differential calculus $\dc$ is given by the map 
$$ s ( \omega \wedge \eta ) = \frac{\omega \otimes_B \eta - \eta \otimes_B \omega}{2}. $$
Since $A$ is a commutative Hopf algebra, $s$ is covariant. Thus, the map $s_\gamma:= \varphi^{-1}_{\Omega^1, \Omega^1} \circ \Gamma ( s ) $ is also covariant. 

Since $\Gamma$ is an monoidal equivalence,
    it follows that 
    \begin{equation*}
        0 \to \Gamma(\mathrm{Ker}(\wedge)) \xrightarrow{\Gamma(i)} \Gamma\big( \form{1}\ot_B\form{1}\big) \xrightarrow{\Gamma(\wedge)}\form{2}_\gamma \to 0
    \end{equation*} is also split exact sequence with the splitting map $\Gamma(s).$ 
    Therefore, the covariant map $s_\gamma$ is a splitting of  
    $$0 \to \mathrm{Ker}(\wedge_\gamma)\xrightarrow{i}\form{1}_\gamma \otimes_{B_\gamma} \form{1}_\gamma\xrightarrow{\wedge_\gamma}\form{2}_\gamma\to 0 ~ \text{since} ~ \wedge_\gamma s_\gamma = \Gamma(\wedge) \varphi_{\Omega^1, \Omega^1}\varphi^{-1}_{\Omega^1, \Omega^1} \Gamma(s)= \id. $$
    This completes the proof. 
\end{proof}


If $B$ is a left $A$-comodule $\ast$-algebra, we know (\cite[Section 2.8]{BeggsMajid:Leabh}) that $\cat$ is a bar category and the same is true about $\tcat.$ In the sequel, $\Upsilon$ and $\Upsilon_\gamma$ will denote the natural isomorphisms of Definition \ref{15thjuly241} for the categories $\cat$ and $\tcat$ respectively.  We recall the following result from \cite{2ndpaper}. 

\begin{prop} (\cite[Theorem B.6]{2ndpaper})
Let $\gamma$ be a unitary $2$-cocycle on a Hopf $\ast$-algebra $A$ and $B$ an $A$-comodule $\ast$-algebra. 
For an object $\cE$ in $\cat,$ consider the map
\begin{equation*} \label{5thdec242jb}
\mathfrak{N}_\cE : \overline{\Gamma(\mathcal{E})} \to \Gamma(\overline{\mathcal{E}}),~ \fN_\cE(\overline{e})= \bar{V}(\mone{e}^*) \overline{\zero{e}},
\end{equation*}
where the map $\overline{V}$ is defined in \eqref{12thaugust251}. Then $ \mathfrak{N}: \text{bar} \circ \Gamma \rightarrow \Gamma \circ \text{bar} $ is a natural isomorphism.  Moreover, for objects $\cE, \cF$ in $\cat,$  the following equation holds:
\begin{equation} \label{8thjuly251}
 (\fN_{\cF} \otimes_{B_\gamma} \fN_{\cE}) \Upsilon_\gamma \overline{\varphi^{-1}_{\cE, \cF}} = \varphi^{-1}_{\overline{\cF}, \overline{\cE}} \Gamma ( \Upsilon ) \fN_{\cE \otimes_B \cF}. 
 \end{equation}
\end{prop}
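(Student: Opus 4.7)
The plan is to verify, in order: (a) each $\fN_\cE$ is a morphism in $\tcat$; (b) $\fN$ is a natural family of isomorphisms; (c) the coherence identity \eqref{8thjuly251} holds. Well-definedness of $\fN_\cE$ as a $\mathbb{C}$-linear map from $\overline{\Gamma(\cE)}$ to $\Gamma(\overline{\cE})$ is immediate since $\overline V$ is scalar-valued. For $B_\gamma$-bilinearity, I would unfold the twisted left and right actions on both $\overline{\Gamma(\cE)}$ (using $b \cdot_\gamma \overline{e} = \overline{e \cdot_\gamma b^{*_\gamma}}$ together with the defining formula for $*_\gamma$) and on $\Gamma(\overline{\cE})$, and compare the resulting expressions. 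The equality reduces to an identity relating a product of two $\gamma$-evaluations to a single $\overline V$-evaluation on the appropriate Sweedler tensor factors of $\mone{b}$ and $\mone{e}$; this identity is a direct consequence of the $2$-cocycle equation for $\gamma$ combined with the defining formula $\overline V(k) = \coin{\two{k}}{S^{-1}(\one{k})}$ and unitarity. The $A_\gamma$-covariance is easier, since both coactions $\delta_{\overline{\Gamma(\cE)}}$ and $\delta_{\Gamma(\overline{\cE})}$ coincide, as $\mathbb{C}$-linear maps, with the bar-coaction on $\overline{\cE}$, and $\overline V(\mone{e}^*)$ is a scalar.

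For (b), an explicit inverse is $\fN_\cE^{-1}(\overline e) = V(\mone{e}^*)\,\overline{\zero{e}}$, where $V$ is the convolution inverse of $\overline V$; that $\fN_\cE$ and $\fN_\cE^{-1}$ are mutually inverse follows from $V * \overline V = \overline V * V = \epsilon$ together with coassociativity of the coaction. Naturality is then immediate: for a morphism $f: \cE \to \cF$ in $\cat$, both $\Gamma(\overline f) \circ \fN_\cE$ and $\fN_\cF \circ \overline{\Gamma(f)}$ send $\overline{e}$ to $\overline V(\mone{e}^*)\,\overline{f(\zero{e})}$, using only $A$-covariance of $f$.

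For (c), which is the heart of the proposition, I would evaluate both composites on a representative element $\overline{e \ot_B f}$ of $\overline{\Gamma(\cE \ot_B \cF)}$. On the left, $\overline{\varphi^{-1}_{\cE,\cF}}$ contributes the complex conjugate of a factor $\coin{\mone{e}}{\mone{f}}$, the bar-category flip $\Upsilon_\gamma$ in $\tcat$ permutes the tensorands (bringing in further cocycle factors when unfolded via the definition of $\Upsilon_\gamma$ inherited from $\tcat$), and finally $\fN_\cF \ot_{B_\gamma} \fN_\cE$ introduces two $\overline V$ factors. On the right, $\fN_{\cE \ot_B \cF}$ contributes a single $\overline V$-factor evaluated at $(\mone{e}\mone{f})^{*}$, while $\Gamma(\Upsilon)$ and $\varphi^{-1}_{\overline{\cF},\overline{\cE}}$ together carry the swap of tensorands and another cocycle factor evaluated on the bar-coactions of $\overline e$ and $\overline f$. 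Matching the two sides reduces, after stripping the common Sweedler factors, to a product identity relating $\overline V$ and $\gamma$ applied to the Sweedler decomposition of $\mone{e}^{*}\mone{f}^{*}$, which follows from the $2$-cocycle equation applied twice together with unitarity. The main obstacle will be the careful bookkeeping of Sweedler indices: the two $\varphi^{-1}$'s, $\Upsilon_\gamma$, and the two instances of $\fN$ together produce four distinct cocycle (or inverse-cocycle) factors whose cancellation is precisely the content of the desired identity, and the antipode $S^{-1}$ hidden inside $\overline V$ must be tracked carefully through the bar-coactions.
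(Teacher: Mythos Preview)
The paper does not supply its own proof of this proposition; it is quoted verbatim from \cite[Theorem B.6]{2ndpaper} and used as a black box in the proof of Lemma \ref{8thjuly253}. Your outline is the natural direct verification and is correct in structure: bilinearity and covariance of $\fN_\cE$, the explicit convolution inverse $V$ for $\overline V$, naturality, and the elementwise comparison for \eqref{8thjuly251} are exactly what the cited reference carries out. The only caveat is that the bookkeeping you flag in part (c) is genuinely delicate---one must track how the $*_\gamma$ involution and the bar-coaction interact with $S^{-1}$ inside $\overline V$, and the reduction to a single cocycle identity requires repeated use of both the $2$-cocycle condition and unitarity---so your sketch would need those identities written out in full to be a complete proof, but there is no missing idea.
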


In fact, the same conclusion can be drawn about the bar category $\qMod{A}{}{}{}.$

The following result has been proved in  \cite[Proposition 6.3]{beggsmajidtwisting} for the special case where the involution $\ast_\gamma$ on $A_\gamma$ coincides with $\ast.$ 

\begin{lem} \label{8thjuly253}
    Suppose that $\gamma$ is a unitary $2$-cocycle on $A$. Let $(\nabla, \sigma)$ be a $*$-preserving $A$-covariant bimodule connection on  $\Omega^1$ such that $\sigma$ is an $A$-covariant map. Then the connection $\nabla_\gamma$ defined in \eqref{7thjuly251} is also $*$-preserving.
\end{lem}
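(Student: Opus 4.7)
The plan is to unfold the $\ast$-preserving equation for $\nabla_\gamma$ in the bar category $\tcat$ and to reduce it to the $\ast$-preserving equation for $\nabla$ in $\cat$ by systematically rewriting every occurring morphism via the monoidal equivalence $\Gamma$, the natural isomorphism $\varphi$ relating $\otimes_{B_\gamma} \circ (\Gamma \times \Gamma)$ with $\Gamma \circ \otimes_B$, and the natural isomorphism $\mathfrak{N}: \mathrm{bar} \circ \Gamma \to \Gamma \circ \mathrm{bar}$.

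First I would identify the star morphism $\star_{\Omega^1_\gamma}: \Omega^1_\gamma \to \overline{\Omega^1_\gamma}$ of the twisted star object with $\mathfrak{N}^{-1}_{\Omega^1} \circ \Gamma(\star_{\Omega^1})$; this is obtained by matching the explicit formula for the cocycle-twisted involution (which is governed by $\overline{V}$) with the defining formula for $\mathfrak{N}_{\Omega^1}$. Since $\sigma$ is $A$-covariant, the bimodule map accompanying $\nabla_\gamma$ is $\sigma_\gamma = \varphi^{-1}_{\Omega^1, \Omega^1} \circ \Gamma(\sigma) \circ \varphi_{\Omega^1, \Omega^1}$; applying the bar functor together with naturality of $\mathfrak{N}$ expresses $\overline{\sigma_\gamma}$ in terms of $\overline{\sigma}$ and the natural isomorphisms. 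A similar rewriting yields $\overline{\nabla_\gamma}$ in terms of $\overline{\nabla}$.

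Substituting these identifications into the defining equation
\[
\overline{\sigma_\gamma}\, \Upsilon_\gamma^{-1}\, (\star_{\Omega^1_\gamma} \otimes_{B_\gamma} \star_{\Omega^1_\gamma})\, \nabla_\gamma = \overline{\nabla_\gamma} \circ \star_{\Omega^1_\gamma},
\]
and using equation \eqref{8thjuly251} (together with its inverse form) to commute $\Upsilon_\gamma^{-1}$ past $\mathfrak{N}^{-1}_{\Omega^1} \otimes_{B_\gamma} \mathfrak{N}^{-1}_{\Omega^1}$ and past $\varphi^{-1}$, both sides reduce to $\varphi^{-1}_{\overline{\Omega^1}, \overline{\Omega^1}} \circ \Gamma$ applied to the respective sides of the $\ast$-preserving identity
\[
\overline{\sigma}\, \Upsilon^{-1}\, (\star_{\Omega^1} \otimes_B \star_{\Omega^1})\, \nabla = \overline{\nabla} \circ \star_{\Omega^1},
\]
which holds by hypothesis. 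Since $\Gamma$ and $\varphi$ are isomorphisms, the equality in $\tcat$ follows.

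The main obstacle I anticipate is the coherence bookkeeping: threading $\Upsilon_\gamma$ through $\mathfrak{N}$ and $\varphi$ in the correct order and at the correct tensor slots. The key ingredient for this step is precisely \eqref{8thjuly251}, which codifies the compatibility of $\mathfrak{N}$ and $\varphi$ with the bar-monoidal structure. Once these coherence substitutions are carried out cleanly, the remainder of the argument is essentially a direct translation of the $\ast$-preserving equation across the monoidal bar-equivalence $\Gamma$.
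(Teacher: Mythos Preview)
Your proposal is correct and follows essentially the same route as the paper: identify $\star_{\Omega^1_\gamma} = \mathfrak{N}^{-1}_{\Omega^1} \circ \Gamma(\star_{\Omega^1})$, use naturality of $\mathfrak{N}$ and the compatibility \eqref{8thjuly251} to transport the $\ast$-preserving identity across the monoidal bar-equivalence $\Gamma$. The only minor imprecision is the claimed common intermediate form ``$\varphi^{-1}_{\overline{\Omega^1},\overline{\Omega^1}} \circ \Gamma(\,\cdot\,)$'': since both sides of the untwisted $\ast$-preserving identity take values in $\overline{\Omega^1 \otimes_B \Omega^1}$ rather than $\overline{\Omega^1} \otimes_B \overline{\Omega^1}$, the correct intermediate form is $\overline{\varphi^{-1}_{\Omega^1,\Omega^1}} \circ \mathfrak{N}^{-1}_{\Omega^1 \otimes_B \Omega^1} \circ \Gamma(\,\cdot\,)$ (this is exactly what the paper writes), which is then matched against the twisted identity via \eqref{8thjuly251}.
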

\begin{proof}
Since $\sigma$ is $A$-covariant, we can define the map $\sigma_\gamma$ by \eqref{1stdec241jb}. Then it is well-known that $(\nabla_\gamma, \sigma_\gamma)$ is a left bimodule connection.

We will use the notations $\star$ and $\star_\gamma$ to denote the morphisms $\Omega^1 \rightarrow \overline{\Omega^1} $ and $\Omega^1_\gamma \rightarrow \overline{\Omega^1_\gamma} $ defined in \eqref{14thmay251}. We recall from Definition \ref{15thapril252} that $\nabla$ is  $\ast$-preserving if 
    \begin{equation} \label{18thapril251}
        \ol{\nabla}\circ \star= \ol{\sigma} \circ \Upsilon^{-1}  \big( \star \ot_B \star \big) \circ   \nabla,
    \end{equation}
    where $\overline{\nabla}(\overline{e})= \overline{\nabla(e)}$.     
    By virtue of \cite[equation (B.6)]{2ndpaper}, we can write 
    \begin{align*}
        \ol{\nabla_\gamma}\circ \star_\gamma &= \ol{\varphi^{-1}_{\Omega^1, \Omega^1} \circ \Gamma(\nabla}) \circ \fN^{-1}_{\Omega^1} \circ \Gamma(\star)\\
        &= \overline{\varphi^{-1}_{\Omega^1, \Omega^1}} \circ \overline{\Gamma ( \nabla )} \circ \mathfrak{N}^{-1}_{\Omega^1} \circ \Gamma ( \star )\\
        &=  \overline{\varphi^{-1}_{\Omega^1, \Omega^1}} \mathfrak{N}^{-1}_{\Omega^1 \otimes_B \Omega^1} \Gamma ( \overline{\nabla} ) \circ \Gamma ( \star )    ~ \text{(by naturality of $\mathfrak{N}$)}\\
        &= \ol{\varphi^{-1}_{\Omega^1, \Omega^1}} \fN^{-1}_{\Omega^1 \ot_B \Omega^1} \Gamma(\ol{\nabla}\circ \star) \\
        &= \ol{\varphi^{-1}_{\Omega^1, \Omega^1}} \fN^{-1}_{\Omega^1 \ot_B \Omega^1} \Gamma(\ol{\sigma} \circ \Upsilon^{-1}  \big( \star \ot_B \star \big) \circ   \nabla) ~ \text{(by \eqref{18thapril251})} \\
        &=  \ol{\varphi^{-1}_{\Omega^1, \Omega^1}} \fN^{-1}_{\Omega^1 \ot_B \Omega^1} \Gamma(\ol{\sigma})  \circ \Gamma(\Upsilon^{-1}) \Gamma \big( \star \ot_B \star \big) \circ   \Gamma(\nabla)\\
        &=   \ol{\varphi^{-1}_{\Omega^1, \Omega^1}\Gamma(\sigma)} \fN^{-1}_{\Omega^1 \ot_B \Omega^1} \circ \Gamma(\Upsilon^{-1}) \Gamma \big( \star \ot_B \star \big) \circ   \Gamma(\nabla ) ~ \text{\big(by naturality of $\fN$\big)}\\
        &=  \ol{\sigma_\gamma} \ol{\varphi^{-1}_{\Omega^1, \Omega^1}} \fN^{-1}_{\Omega^1 \ot_B \Omega^1} \circ \Gamma(\Upsilon^{-1})  \varphi_{\ol{\Omega^1}, \ol{\Omega^1}}\big( \Gamma(\star) \ot_{B_\gamma} \Gamma(\star) \big) \varphi^{-1}_{\Omega^1, \Omega^1}\circ   \Gamma(\nabla ) ~\text{\big(by \eqref{1stdec241jb},\eqref{15thdec244}\big)}\\
        &= \ol{\sigma_\gamma} \Upsilon_\gamma^{-1} \big(\fN_{\Omega^1}^{-1} \ot_{B_\gamma} \fN_{\Omega^1}^{-1} \big) \big( \Gamma(\star) \ot_{B_\gamma} \Gamma(\star) \big) \nabla_\gamma ~ \text{\big(by \eqref{8thjuly251}\big)}\\
        &= \ol{\sigma_\gamma} \Upsilon_\gamma^{-1}  \big( \star_\gamma \ot_{B_\gamma} \star_\gamma \big) \nabla_\gamma 
    \end{align*}
 and we have applied \cite[equation (B.6)]{2ndpaper} again.   Hence, $\nabla_\gamma$ is $*$-preserving.
\end{proof}

Now we are in a position to state and prove the following result which extends the existence-uniqueness result in \cite[Theorem 6.12]{meslandrennie1} to arbitrary unitary cocycle deformations. We will be using the fact proved in Proposition \ref{prop:classtame} that $(\Omega^\bullet_\gamma, \wedge_\gamma, d_\gamma, s_\gamma)$ is a quasi-tame differential calculus. 

\begin{theorem} \label{9thjuly251}
   Let $\nabla$ be the covariant Levi-Civita connection for a real covariant metric $\metric$ on the space of one-forms of the $\ast$-differential calculus $\dc$ in Example \ref{15thdec24jb1}. Then $\ci \nabla_\gamma$ is the unique Levi-Civita connection for the weak Hermitian differential structure associated to the data $(\dctwisted, s_\gamma, \metrictwisted)$ as in Example \ref{14thjune251}.
\end{theorem}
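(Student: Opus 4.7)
The strategy is to reduce the claim to an application of Theorem \ref{thm:levicomp} applied to the quasi-tame $\ast$-differential calculus $(\Omega^\bullet_\gamma, \wedge_\gamma, d_\gamma, s_\gamma),$ which is provided by Proposition \ref{prop:5thdec241} and Proposition \ref{prop:classtame}. It then suffices to verify three things: (i) $\metrictwisted$ is a real metric on $\Omega^1_\gamma$; (ii) $(\nabla_\gamma, \sigma_\gamma)$ is a $\ast$-preserving bimodule connection with $\sigma_\gamma$ invertible; and (iii) $(\nabla_\gamma, \sigma_\gamma)$ is the unique Beggs-Majid Levi-Civita connection for $\metrictwisted$ on $\dctwisted.$

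Both (i) and the invertibility of $\sigma_\gamma$ are formal consequences of the monoidal equivalence $\Gamma \colon \cat \to \tcat$: the defining equations of a metric (Definition \ref{4thmay242}) and of the reality condition (Definition \ref{23rddec24jb2}) translate under $\Gamma$ via \eqref{15thdec244} together with the unitarity of $\gamma$, and $\sigma_\gamma$ inherits invertibility from $\sigma$ by functoriality of the construction \eqref{1stdec241jb}. On the classical side, $\sigma$ is the flip $\omega \otimes_B \eta \mapsto \eta \otimes_B \omega$, which is an $A$-covariant isomorphism, and $\nabla$ is $\ast$-preserving since the classical real-valued Levi-Civita connection commutes with complex conjugation. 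Lemma \ref{8thjuly253} then yields that $\nabla_\gamma$ is $\ast$-preserving, establishing (ii).

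For (iii), existence is functorial: applying $\Gamma$ and \eqref{15thdec244} converts the identities $\wedge \circ \nabla = d$ and $\nabla_{\Omega^1 \otimes_B \Omega^1} g = 0$ into their twisted counterparts $\wedge_\gamma \circ \nabla_\gamma = d_\gamma$ and $(\nabla_\gamma)_{\Omega^1_\gamma \otimes_{B_\gamma} \Omega^1_\gamma} g_\gamma = 0.$ For uniqueness, one deforms back along the inverse cocycle $\overline{\gamma}$: any Beggs-Majid Levi-Civita connection for $\metrictwisted$ on the twisted calculus pulls back to a Beggs-Majid Levi-Civita connection for $\metric$ on $\dc,$ which by Example \ref{15thdec24jb1} must equal $\nabla.$ With (i)--(iii) in hand, Theorem \ref{thm:levicomp} delivers the conclusion that $\ci \nabla_\gamma$ is the unique Levi-Civita connection for the weak Hermitian differential structure associated to $(\dctwisted, s_\gamma, \metrictwisted).$

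The principal obstacle I anticipate is the uniqueness step in (iii): a candidate Beggs-Majid Levi-Civita connection on the twisted calculus is not a priori $A_\gamma$-covariant, and so cannot be immediately pulled back via $\Gamma.$ The cleanest resolution is to invoke the general existence-uniqueness machinery of \cite[Proposition 9.28]{BeggsMajid:Leabh} on the twisted side directly, which also underlies the uniqueness assertion on the untwisted calculus in Example \ref{15thdec24jb1}; alternatively, one tracks covariance carefully through $\Gamma$ and its quasi-inverse to confirm that the unique Beggs-Majid Levi-Civita connection on either side is automatically covariant.
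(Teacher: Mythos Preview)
Your proposal is correct and follows essentially the same route as the paper: reduce to Theorem \ref{thm:levicomp} after checking quasi-tameness (Proposition \ref{prop:classtame}), reality of $\metrictwisted$, and that $(\nabla_\gamma,\sigma_\gamma)$ is $\ast$-preserving via Lemma \ref{8thjuly253}. For step (iii) the paper does exactly what you flag as the ``cleanest resolution'': it invokes \cite[Proposition 9.28]{BeggsMajid:Leabh} directly on the twisted side to obtain existence and uniqueness of the Beggs--Majid Levi-Civita connection for $\metrictwisted$, bypassing the covariance issue with the inverse-cocycle argument.
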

\begin{proof}
   If $\sigma ( \omega \otimes_B \eta ) = \eta \otimes_B \sigma, $ then $(\nabla,\sigma)$ is the bimodule Levi-Civita connection for the metric $\metric$ and hence, by virtue of \cite[Proposition 9.28]{BeggsMajid:Leabh}, $(\nabla_\gamma, \sigma_\gamma)$ is the Levi-Civita connection for $\metrictwisted$ in the sense of Definition \ref{4thjuly241}.

    Since $\metric$ is a real metric on $\form{1},$  \cite[Proposition 4.8]{2ndpaper} implies that the pair  $ \metrictwisted$ defined by \eqref{1stdec241jb} is a real metric on $\form{1}_\gamma.$ By Proposition \ref{prop:classtame}, $\dctwisted$ is quasi-tame and hence, Example \ref{14thjune251} yields a weak Hermitian differential structure. 
 From Example \ref{15thdec24jb1}, we know that $\nabla$ is the extension of a connection on $\Omega^1_{\mathbb{R}} ( X ( \mathbb{R} ) ) .$ From this, it can be checked easily that $\nabla$ is $*$-preserving. Hence,  $\nabla_\gamma$ is also $*$-preserving by Lemma \ref{8thjuly253}. Therefore, the result follows from Theorem \ref{thm:levicomp}. 
\end{proof}

\subsection{The Heckenberger--Kolb Calculi}  \label{11thaugust251}

Let $G$ be a compact connected simple Lie group of rank $r$ with complexified Lie algebra $\mathfrak{g}.$ We will let $\{ \alpha_1, \cdots \alpha_r\}$ denote the simple roots while $\mathcal{P}^+$ will stand for the set of all dominant integral weight.

For $0 < q < 1,$ the symbol $U_q ( \mathfrak{g} ) $ will denote the Drinfeld-Jimbo quantized universal enveloping algebra of  \cite{DrinfeldICM,Jimbo1986}. 
Thus, it is a noncommutative associative algebra generated by symbols $E_i, F_i, K_i $ and $ K^{-1}_i$ for $i = 1, \cdots r,$ satisfying the relations (12)-(16) of \cite[Chapter 6]{KSLeabh}.

For a dominant integral weight $\mu$ of $\mathfrak{g}$ and maximal weight space $V_\mu$, define 
$$C(V_\mu):= 
 \mathrm{span}_{\mathbb{C}}\!\left\{ c_{f,v}: U_q (\mathfrak{g}) \to \bbC  \,| c^{{V}}_{f,v}(X) := f\big(X \triangleright v\big) \, v \in V_\mu,  f \in V_\mu^*\right\}.$$
 Then the function algebra $ \mathcal{O}_q ( G ) $ of the compact quantum group $G_q$ is defined as 
\begin{equation*}\label{eq:PeterWeyl}
\mathcal{O}_q(G) := \bigoplus_{\mu \in \mathcal{P}^+} C(V_{\mu}),
\end{equation*}
We note that $\mathcal{O}_q(G)$ is a cosemisimple Hopf algebra by construction. For more details, we refer to \cite{KSLeabh,NeshveyevTuset}.

Now  fix a subset $S $ of $\{ 1, 2, \cdots, r \}$ and consider the Hopf $*$-subalgebra
\begin{align*}
U_q(\mathfrak{l}_S) := \big< K_i, E_j, F_j \,|\, i = 1, \ldots, r; j \in S \big>.
\end{align*} 
The category of $U_q(\mathfrak{l}_S)$-modules is known to be semisimple. The Hopf $\ast$-algebra embedding $\iota_S:U_q(\mathfrak{l}_S) \hookrightarrow U_q(\mathfrak{g})$ induces the dual Hopf \mbox{$*$-algebra} map $\iota_S^{\circ}: U_q(\mathfrak{g})^{\circ} \to U_q(\mathfrak{l}_S)^{\circ}$. Since $\mathcal{O}_q(G) \subseteq U_q(\mathfrak{g})^{\circ}$ by construction, we can consider the restriction map
\begin{align*}
\pi_S:= \iota_S^{\circ}|_{\mathcal{O}_q(G)}: \mathcal{O}_q(G) \to U_q(\mathfrak{l}_S)^{\circ},
\end{align*}
and the Hopf $*$-subalgebra 
$
\mathcal{O}_q(L_S) := \pi_S\big(\mathcal{O}_q(G)\big) \subseteq U_q(\mathfrak{l}_S)^\circ.
$ Since the category of $U_q(\mathfrak{l}_S)$-modules is semisimple,  $\mathcal{O}_q(L_S)$ must be a cosemisimple Hopf algebra. 

Note that we have a surjective Hopf algebra homomorphism $\pi_S:\mathcal{O}_q ( G ) \rightarrow \mathcal{O}_q ( L_S ) $ and it turns out that $ \mathcal{O}_q ( G/L_S ):= \mathcal{O}_q \big(G\big)^{\mathrm{co}\left(\mathcal{O}_q(L_S)\right)} $ is a quantum homogeneous space. The algebra $ \mathcal{O}_q ( G/L_S )$ is called the algebra of functions on the quantum flag manifold associated to the set $S.$
\begin{definition}
A quantum flag manifold is {irreducible} if the defining subset of simple roots is of the form
$
S = \{1, \dots, r \} \setminus \{s\}
$
where $\alpha_s$ has coefficient $1$ in the expansion of the highest root of $\mathfrak{g}$.
\end{definition}

For irreducible quantum flag manifolds,  Heckenberger and Kolb  (\cite{HKdR}) constructed a canonical $\mathcal{O}_q ( G ) $-covariant differential calculi which we recall in the next result. The symbol $\modz{\cO_q(G)}{\cO_q(G/L_S)}$ stands for the monoidal category introduced at the beginning of this section and $\Phi$ denotes Takeuchi's equivalence.

\begin{theorem}\label{thm:HKClass}
Over any irreducible quantum flag manifold $\mathcal{O}_q(G/L_S)$, there exists a unique finite-dimensional left $\mathcal{O}_q(G)$-covariant differential $*$-calculus
\[
  \Omega^{\bullet}_q(G/L_S) \in \modz{\cO_q(G)}{\cO_q(G/L_S)}
\]
of classical dimension, i.e,
\begin{align*}
  \dim \Phi\!\left(\Omega^{k}_q(G/L_S)\right) = \binom{2M}{k}, & & \text{ for all $k = 0, \dots, 2 M$},
\end{align*}
where $M$ is the complex dimension of the corresponding classical manifold.
\end{theorem}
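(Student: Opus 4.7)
The plan is to follow the original strategy of Heckenberger and Kolb, exploiting Takeuchi's equivalence $\Phi$ as the fundamental tool. I would first reduce the classification of covariant first order differential calculi (FODC) on $B = \mathcal{O}_q(G/L_S)$ to a problem in the cosemisimple category $\lmod{\mathcal{O}_q(L_S)}{}$. By the general theory of covariant FODCs on quantum homogeneous spaces (building on Woronowicz's correspondence), a covariant FODC on $B$ is equivalent to the data of an $\mathcal{O}_q(L_S)$-subcomodule of the cotangent space $\Phi(B^+/(B^+)^2)$, where $B^+ = B \cap \ker(\epsilon)$. The $*$-structure then corresponds to a conjugation-invariance condition on this subcomodule.

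For existence, I would construct the holomorphic and antiholomorphic components $\Omega^{(1,0)}$ and $\Omega^{(0,1)}$ explicitly from the two distinguished irreducible $\mathcal{O}_q(L_S)$-summands of the cotangent space, which in the classical limit $q \to 1$ correspond to tangent directions along positive and negative roots not lying in the Levi subalgebra $\mathfrak{l}_S$. Taking $\Omega^{1}_q(G/L_S) := \Omega^{(1,0)} \oplus \Omega^{(0,1)}$ yields a covariant first order $*$-calculus, and the full differential graded algebra is then defined by passing to the maximal prolongation. One then verifies degree-by-degree that the resulting higher-order components match the classical binomial dimensions.

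For uniqueness, the irreducibility hypothesis on $S$ plays a decisive role: because $\alpha_s$ has coefficient one in the highest root of $\mathfrak{g}$, the cotangent space $\Phi(B^+/(B^+)^2)$ decomposes as the direct sum of exactly two non-isomorphic irreducible $U_q(\mathfrak{l}_S)$-comodules whose total classical dimension is $2M$. Any covariant FODC on $\mathcal{O}_q(G/L_S)$ of classical dimension therefore has no freedom in degree one: the $*$-compatibility forces both summands to be picked up, which recovers $\Omega^{(1,0)} \oplus \Omega^{(0,1)}$. Uniqueness in higher degrees then reduces to the uniqueness of the maximal prolongation subject to the classical dimension constraint.

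The main obstacle, exactly as in the original Heckenberger--Kolb work, is the control of higher-degree components. Matching the dimensions $\binom{2M}{k}$ requires a delicate analysis of the quadratic ideal generating the maximal prolongation and a PBW-type argument adapted to the quantum root structure, showing that no additional $q$-deformed relations collapse the calculus at higher degree. The semisimplicity of $\lmod{\mathcal{O}_q(L_S)}{}$ is used to decompose candidate relations into isotypic components and rule out pathological subcomodules, but checking that the resulting exterior algebra has precisely the classical Poincar\'e series is the most technical step of the proof.
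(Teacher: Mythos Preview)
The paper does not supply its own proof of this theorem: it is stated as a result of Heckenberger and Kolb, with the attribution ``\cite{HKdR}'' given in the text immediately preceding the statement. There is therefore no in-paper argument to compare your proposal against.

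That said, your outline is broadly faithful to the original Heckenberger--Kolb strategy: reduce the classification of covariant FODCs via Takeuchi's equivalence to the decomposition of the cotangent space as an $\mathcal{O}_q(L_S)$-comodule, use the irreducibility hypothesis on $S$ to force exactly two irreducible summands swapped by the $*$-structure, and then control the higher degrees of the maximal prolongation. You have correctly identified the technical crux as the verification of the Poincar\'e series $\binom{2M}{k}$ in higher degree, which in the original work is indeed handled by a careful PBW-type analysis of the quadratic relations. One minor caution: your phrase ``uniqueness of the maximal prolongation subject to the classical dimension constraint'' glosses over the fact that one must also argue that any calculus of classical dimension is necessarily (a quotient of) the maximal prolongation of its degree-one part, which requires knowing that the dimension constraint already saturates at the maximal prolongation; this is part of the same PBW computation but is worth stating explicitly.
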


The calculus $( \Omega^{\bullet}_q(G/L_S) , \wedge, d)$ is known as the Heckenberger-Kolb calculus on $\mathcal{O}_q(G/L_S)$. The following result was proved in \cite{LeviCivitaHK}.

\begin{theorem}$\mathrm{\big(}$\cite[Theorem 6.14]{LeviCivitaHK}$\mathrm{\big)}$ \label{thm:Levi-CivitaHK}
Given an  $\mathcal{O}_q (G) $-covariant real metric $(g, (~, ~))$ on $\Omega^1_q(G/L_S)$, there exists a unique covariant bimodule connection $(\nabla, \sigma )$ on $\Omega^1_q (G/L_S) $  which is torsionless and compatible with the metric $ (g, (~, ~))$ in the sense of Definition \ref{4thjuly241}.
 \end{theorem}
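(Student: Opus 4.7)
The strategy is to exploit the semisimplicity of the category of finite-dimensional $\mathcal{O}_q(L_S)$-comodules together with Takeuchi's equivalence $\Phi$ from \eqref{2ndjuly241}, which by Theorem \ref{thm:HKClass} identifies each $\Omega^k_q(G/L_S)$ with a finite-dimensional object of $\lmod{\mathcal{O}_q(L_S)}{}$ of classical dimension. Under this equivalence, the problem of classifying $\mathcal{O}_q(G)$-covariant connections on $\Omega^1_q(G/L_S)$ satisfying the two conditions in Definition \ref{4thjuly241} becomes a linear-algebraic problem in a semisimple tensor category, which is amenable to a dimension count using Schur's lemma.

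First, I would fix a reference $\mathcal{O}_q(G)$-covariant bimodule connection $(\nabla_0,\sigma_0)$ on $\Omega^1_q(G/L_S)$; its existence follows because the cosemisimplicity of $\mathcal{O}_q(L_S)$ makes every short exact sequence in $\modz{\mathcal{O}_q(G)}{\mathcal{O}_q(G/L_S)}$ covariantly split, so that a covariant splitting of the universal calculus furnishes $\nabla_0$ (and the associated braiding $\sigma_0$ is then forced by bimodule-ness, cf. Proposition \ref{rem:covsplit} for the analogous splitting argument). Any other covariant bimodule connection differs from $\nabla_0$ by a morphism $\alpha \colon \Omega^1_q(G/L_S) \to \Omega^1_q(G/L_S) \otimes_{\mathcal{O}_q(G/L_S)} \Omega^1_q(G/L_S)$ in $\modz{\mathcal{O}_q(G)}{\mathcal{O}_q(G/L_S)}$, and $\Phi$ identifies the space $\mathcal{A}$ of such $\alpha$ with a $\mathrm{Hom}$-space in the semisimple category $\lmod{\mathcal{O}_q(L_S)}{}$. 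The torsion-zero equation $\wedge \circ \nabla = d$ is affine in $\alpha$ and restricts $\alpha$ to lie in the affine subspace $\mathcal{A}_T$ determined by a single morphism-level equation $\wedge \circ \alpha = -T_{\nabla_0}$; similarly, using the reformulation of metric compatibility via $\mathrm{coev}_g$ recorded in Lemma \ref{21stapril254}, the metric compatibility condition becomes a second affine equation in $\alpha$, cutting out an affine subspace $\mathcal{A}_g$.

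The main obstacle, and the crux of the argument in \cite{LeviCivitaHK}, is to verify that $\mathcal{A}_T \cap \mathcal{A}_g$ is a single point. To handle this I would use the explicit decomposition of $\Phi\!\left(\Omega^1_q(G/L_S) \otimes \Omega^1_q(G/L_S)\right)$ into isotypic components coming from the holomorphic--antiholomorphic splitting $\Omega^1 = \Omega^{(1,0)} \oplus \Omega^{(0,1)}$ of the Heckenberger--Kolb calculus, and then invoke Schur's lemma in $\lmod{\mathcal{O}_q(L_S)}{}$. The classical-dimensionality assertion $\dim \Phi(\Omega^k_q(G/L_S)) = \binom{2M}{k}$ together with the reality of $(g,(\,,\,))$ supplies the precise numerics needed to show that the two covariant linear conditions, taken together, cut out exactly one solution. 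Existence then follows by an explicit construction in each isotypical block, and uniqueness is the dimension count itself.
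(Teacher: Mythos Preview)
The paper does not prove this theorem; it is quoted verbatim from \cite[Theorem 6.14]{LeviCivitaHK}. From the surrounding discussion (especially the ``second proof of Theorem \ref{14thjuly251}''), the argument in \cite{LeviCivitaHK} is constructive rather than by a dimension count: one uses the complex structure of Proposition \ref{prop:complexstructure} to split $\Omega^1_q(G/L_S) = \Omega^{(1,0)} \oplus \Omega^{(0,1)}$, shows that a real covariant metric decomposes as $\kH_g = \kH_1 \oplus \kH_2$ into Hermitian metrics on the two summands, applies the Beggs--Majid Chern-connection theorem \cite[Theorem 8.53]{BeggsMajid:Leabh} to obtain the unique $\kH_i$-compatible connections $\nabla_{\mathrm{Ch}}$ and $\nabla_{\mathrm{Ch,op}}$, and finally verifies that their direct sum is torsion-free and compatible with $(g,(\,,\,))$.

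Your Schur-lemma strategy is a genuinely different route, but as written it has a real gap. Your parametrisation of covariant bimodule connections is incomplete: the difference of two bimodule connections $(\nabla_0,\sigma_0)$ and $(\nabla_1,\sigma_1)$ is a \emph{bimodule} morphism only when $\sigma_0 = \sigma_1$; otherwise it is merely left $\mathcal{O}_q(G/L_S)$-linear. So the affine space $\mathcal{A}$ you describe sees only those bimodule connections sharing the generalised braiding $\sigma_0$, and your ``affine'' metric-compatibility equation $\nabla_{\Omega^1 \otimes \Omega^1} g = 0$ is affine only under that restriction --- in general it is quadratic in the connection, since $\sigma$ enters the tensor-product connection \eqref{4thmay241} and is itself determined by $\nabla$.

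More fundamentally, the paper records (in the proof of Proposition \ref{14thjuly252}, citing \cite[Theorem 4.5]{BuarelativeHopfModule}) that $\Omega^1_q(G/L_S)$ admits a \emph{unique} covariant left connection. This collapses the uniqueness question entirely and shifts the whole burden to existence: one must show that this single covariant connection is in fact a bimodule connection which is torsion-free and compatible with the given real metric $(g,(\,,\,))$. Your sentence ``existence then follows by an explicit construction in each isotypical block'' is precisely where the Chern-connection machinery of \cite{LeviCivitaHK} does the actual work, and your sketch does not supply a substitute for it.
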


 Now we are in a position to accommodate the Heckenberger-Kolb calculi in the framework of Mesland-Rennie. 

\begin{prop} \label{14thjuly252}
The Heckenberger Kolb calculi are covariantly quasi-tame and the connection $(\nabla, \sigma)$ of Theorem \ref{thm:Levi-CivitaHK} is the  unique $\ast$-preserving covariant connection on the space of one-forms.    
\end{prop}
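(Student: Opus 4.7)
The first assertion follows directly from Proposition \ref{rem:covsplit}: $\mathcal{O}_q(G/L_S)$ is a quantum homogeneous space of $A = \mathcal{O}_q(G)$, the quotient Hopf algebra $H = \mathcal{O}_q(L_S)$ is cosemisimple by construction, and Theorem \ref{thm:HKClass} guarantees $\Omega^k_q(G/L_S) \in \modz{\mathcal{O}_q(G)}{\mathcal{O}_q(G/L_S)}$ for every $k$. Hence Proposition \ref{rem:covsplit} produces a covariant $\mathcal{O}_q(G/L_S)$-bilinear splitting of $\wedge$, establishing covariant quasi-tameness.

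For the $\ast$-preserving property, the plan is a uniqueness-via-conjugation argument applied to the covariant Levi-Civita connection $(\nabla, \sigma)$ of Theorem \ref{thm:Levi-CivitaHK}. Using the star-object structure \eqref{14thmay251} on $\Omega^1 := \Omega^1_q(G/L_S)$ in the bar category $\cat$, one forms a bar-conjugated pair $(\nabla', \sigma')$ by transporting $(\nabla, \sigma)$ across $\star_{\Omega^1}$ and $\Upsilon$, essentially declaring $\nabla'$ to be the right-hand side of the $\ast$-preserving equation of Definition \ref{15thapril252} (with $\sigma'$ defined analogously from $\sigma$). The naturality of $\Upsilon$ together with the functoriality of the bar operation (see Example \ref{5thdec241jb}) will ensure that $(\nabla', \sigma')$ is again an $\mathcal{O}_q(G)$-covariant left bimodule connection on $\Omega^1$. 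I would then verify two compatibility statements: first, that $\nabla'$ is torsionless, which should follow from the $\ast$-compatibility of $d$ and $\wedge$ built into Definition \ref{9thmay253}; and second, that $\nabla'$ is compatible with $g$, which is where realness of $g$ in the sense of Definition \ref{23rddec24jb2} enters decisively and converts $\nabla g = 0$ into $\nabla' g = 0$. The uniqueness clause of Theorem \ref{thm:Levi-CivitaHK} will then force $(\nabla', \sigma') = (\nabla, \sigma)$, which is by construction exactly the $\ast$-preserving condition of Definition \ref{15thapril252}.

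Uniqueness of $(\nabla, \sigma)$ among $\ast$-preserving covariant Levi-Civita bimodule connections is then immediate: any such connection is, in particular, a covariant torsionless metric-compatible bimodule connection, and hence equals $(\nabla, \sigma)$ by Theorem \ref{thm:Levi-CivitaHK}.

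The main obstacle will be the direct verification that the conjugated pair $(\nabla', \sigma')$ remains torsionless and metric-compatible: one has to carefully track the interaction between the bar functor, the natural transformation $\Upsilon$, and the morphism $\star_{\Omega^1}$ with $d$, $\wedge$, and $g$. Realness of $g$ is indispensable for the metric-compatibility step, and invertibility of $\sigma$ (needed in the construction of $\sigma'$) must be read off from the explicit form of $(\nabla, \sigma)$ produced in the proof of Theorem \ref{thm:Levi-CivitaHK}.
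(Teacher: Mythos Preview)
Your first paragraph matches the paper's argument exactly.

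For the $\ast$-preserving part, your strategy is sound but the paper takes a genuinely different and shorter route. Rather than invoking the uniqueness clause of Theorem \ref{thm:Levi-CivitaHK} (which would force you to verify that the conjugated connection is again torsionless and metric-compatible, exactly the checks you flag as the main obstacle), the paper appeals to a much stronger uniqueness statement: by \cite[Theorem 4.5]{BuarelativeHopfModule}, the bimodule $\Omega^1_q(G/L_S)$ admits a \emph{unique} $\mathcal{O}_q(G)$-covariant left connection, full stop. Concretely, the paper forms the conjugated pair $(\nabla_1,\sigma_1)$ via the $\dagger$-maps (your $\star_{\Omega^1}$ and $\Upsilon$ in disguise), observes that this is a covariant \emph{right} bimodule connection, and then uses invertibility of $\sigma$ (cited from \cite{ReAleJun}) to pass to the covariant left connection $\sigma_1^{-1}\nabla_1$. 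The strong uniqueness immediately gives $\sigma_1^{-1}\nabla_1=\nabla$, which unwinds to the $\ast$-preserving identity. No torsion or metric verification is needed.

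Two remarks on your version. First, the conjugate of a left connection is naturally a right connection, so to land back among left bimodule connections you must compose with $\sigma^{-1}$; your sketch elides this, though it is not a fatal gap. Second, the invertibility of $\sigma$ that you propose to ``read off from the explicit form'' is in fact a nontrivial input: the paper imports it from the preprint \cite{ReAleJun}, and the second proof of Theorem \ref{14thjuly251} given later in the paper is there precisely to avoid relying on it. Your approach buys independence from the external uniqueness result \cite{BuarelativeHopfModule}, at the cost of the torsion and metric checks; the paper's approach trades those checks for the external citations.
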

\begin{proof}
We have already observed that $\mathcal{O}_q ( G/L_S ) $ is a quantum homogeneous space of  $\mathcal{O}_q ( G ) $ and  the Hopf algebra $\mathcal{O}_q ( L_S ) $ is cosemisimple. Moreover, $\Omega^k_q ( G/L_S ) $ are objects of the category $\modz{\cO_q(G)}{\cO_q(G/L_S)}.$ Therefore, Proposition \ref{rem:covsplit} implies that the Heckenberger-Kolb calculi are covariantly quasi-tame. 

Now we prove that $(\nabla, \sigma)$ is $\ast$-preserving. We will use the fact proved in \cite{ReAleJun} that $\sigma$ is invertible.  
Let $\dagger_{\Omega^1}: \Omega^1_q(G/L_S) \rightarrow \Omega^1_q(G/L_S) $ denote the map 
$\dagger_{\Omega^1} ( \omega ) = \omega^* $ as in \eqref{18thjune251} while
 $$\dagger_{\Omega^1 \otimes_{\mathcal{O}_q (G/L_S) } \Omega^1  } :  \form{1}_q(G/L_S) \otimes_{\mathcal{O}_q(G/L_S)} \form{1}_q(G/L_S) \rightarrow \form{1}_q(G/L_S) \otimes_{\mathcal{O}_q(G/L_S)} \form{1}_q(G/L_S) $$
 is defined as in \eqref{13thmay251}.
 Moreover, let us define
 $$ \nabla_1:= \dagger_{\Omega^1 \otimes_{\mathcal{O}_q (G/L_S) } \Omega^1  } \circ \nabla \circ \dagger_{\Omega^1} ~ \text{and} ~ \sigma_1:= \dagger_{\Omega^1 \otimes_{\mathcal{O}_q (G/L_S) } \Omega^1  } \circ \sigma \circ \dagger_{\Omega^1 \otimes_{\mathcal{O}_q (G/L_S) } \Omega^1  }. $$
 Then it can be checked that $(\nabla_1, \sigma_1)$  is an $\mathcal{O}_q(G)$-covariant right bimodule connection on $\Omega^1_q(G/L_S).$ Since $\sigma$ is invertible, so is $\sigma_1,$ and hence $\sigma_1^{-1} \nabla_1$ is an $\mathcal{O}_q(G)$-covariant left connection. Since $\Omega^1_q ( G/L_S ) $ admits a unique covariant left connection (\cite[Theorem 4.5]{BuarelativeHopfModule}) and $\nabla$ is a covariant connection, we conclude that  $\sigma_1^{-1} \nabla_1= \nabla$. 
    Consequently, $\nabla= \sigma \circ \dagger_{\Omega^1 \otimes_{\mathcal{O}_q (G/L_S) } \Omega^1  } \circ \nabla \circ \dagger_{\Omega^1}.$ But from \cite[page 266]{BeggsMajid:Leabh}, i.e. $\nabla$ is $\ast$-preserving. 
\end{proof}

Now the final result follows along the lines of the proof of Theorem \ref{9thjuly251}.

\begin{theorem} \label{14thjuly251}
Let $\metric$ be a  real covariant metric  on  $\Omega^1_q (G/L_S ) $ and $(\nabla, \sigma )$  the bimodule connection of Theorem \ref{thm:Levi-CivitaHK}.  Fix a covariant splitting $s$  of \eqref{4thjan251}.  

Then $\ci \nabla$ is the unique Levi-Civita connection (in the sense of Definition \ref{13thjan251}) for the weak Hermitian differential structure associated to the data $( ( \Omega^\bullet_q ( G/L_S ), \wedge, d ), s, \metric  ).$ 
\end{theorem}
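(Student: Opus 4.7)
The strategy is to reduce the claim to Theorem \ref{thm:levicomp} by verifying its hypotheses for the data $( ( \Omega^\bullet_q ( G/L_S ), \wedge, d ), s, \metric )$ together with the Beggs--Majid bimodule Levi-Civita connection $(\nabla, \sigma)$ supplied by Theorem \ref{thm:Levi-CivitaHK}. Once the hypotheses of Theorem \ref{thm:levicomp} are in place, the equivalence $(i) \Leftrightarrow (ii)$ transports both the existence and the uniqueness of the Beggs--Majid Levi-Civita connection to the Mesland--Rennie side, yielding the conclusion for $\ci \nabla$. The whole argument parallels Theorem \ref{9thjuly251} almost verbatim, the difference being that the inputs come from the Heckenberger--Kolb machinery instead of the cocycle-deformation machinery.

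The hypotheses to check are as follows. The quadruple $( \Omega^\bullet_q ( G/L_S ), \wedge, d, s )$ is a quasi-tame $\ast$-differential calculus: the $\ast$-structure is built into Theorem \ref{thm:HKClass}, while $\mathcal{O}_q(G/L_S)$ is a quantum homogeneous space of the cosemisimple Hopf algebra $\mathcal{O}_q(L_S)$ and every $\Omega^k_q(G/L_S)$ belongs to $\modz{\mathcal{O}_q(G)}{\mathcal{O}_q(G/L_S)}$, so Proposition \ref{rem:covsplit} produces a covariant splitting (as recorded in Proposition \ref{14thjuly252}); the chosen $s$ in the statement serves equally well. The metric $\metric$ is real by hypothesis. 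The bimodule connection $(\nabla, \sigma)$ is $\ast$-preserving by Proposition \ref{14thjuly252}, and the generalized braiding $\sigma$ is an isomorphism by the result of \cite{ReAleJun} invoked in the proof of that proposition.

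With the hypotheses verified, the forward implication $(i) \Rightarrow (ii)$ of Theorem \ref{thm:levicomp}, applied to the Beggs--Majid Levi-Civita connection $(\nabla, \sigma)$ provided by Theorem \ref{thm:Levi-CivitaHK}, shows that $\ci \nabla$ is a Levi-Civita connection for the associated weak Hermitian differential structure. For uniqueness, suppose $\nabla'$ is another $\ast$-preserving bimodule Levi-Civita connection (with invertible braiding) for that structure; the reverse implication $(ii) \Rightarrow (i)$ then shows $(-\ci) \nabla'$ is a Beggs--Majid Levi-Civita connection for $\metric$, which by the uniqueness clause of Theorem \ref{thm:Levi-CivitaHK} must equal $\nabla$, so $\nabla' = \ci \nabla$.

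The main obstacle, already handled in Proposition \ref{14thjuly252}, is the verification that the canonical covariant Beggs--Majid Levi-Civita connection on the Heckenberger--Kolb calculus is $\ast$-preserving; this relies crucially on the invertibility of $\sigma$ from \cite{ReAleJun} and the uniqueness of covariant left connections on $\Omega^1_q(G/L_S)$ established in \cite{BuarelativeHopfModule}. Once these ingredients are available, the present theorem reduces to a direct application of Theorem \ref{thm:levicomp} with no further computation.
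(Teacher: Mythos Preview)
Your proposal is correct and matches the paper's first proof essentially verbatim: the paper simply states that the result ``follows along the lines of the proof of Theorem \ref{9thjuly251}'', which amounts exactly to verifying the hypotheses of Theorem \ref{thm:levicomp} via Proposition \ref{14thjuly252} (covariant quasi-tameness, $\ast$-preservation, invertibility of $\sigma$ from \cite{ReAleJun}) and then invoking that theorem.

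It may interest you that the paper also supplies a \emph{second} proof that deliberately avoids the invertibility of $\sigma$ from \cite{ReAleJun}. Instead of passing through Theorem \ref{thm:levicomp} (which requires $\sigma$ invertible and $\ast$-preserving), the alternative argument exploits the covariant complex structure on the Heckenberger--Kolb calculus: the real covariant metric splits as $\mathscr{H}_g = \mathscr{H}_1 \oplus \mathscr{H}_2$ along the decomposition $\Omega^1 = \Omega^{(1,0)} \oplus \Omega^{(0,1)}$, and the Levi-Civita connection $\nabla$ is identified as the direct sum $\nabla_{\mathrm{Ch}} + \nabla_{\mathrm{Ch,op}}$ of the Chern connections for $\mathscr{H}_1$ and $\mathscr{H}_2$ with respect to the two opposite complex structures. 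Since Chern connections are compatible with their defining Hermitian metrics by construction, $\nabla$ is compatible with $\mathscr{H}_g$ directly, and one then appeals to Lemma \ref{10thjan2512} (rather than Proposition \ref{21stapril251}, which needs the $\ast$-preserving hypothesis) for the Hermitian condition, and to Theorem \ref{14thjan251} for torsionlessness. Your route is shorter but leans on the external input from \cite{ReAleJun}; the second proof is more self-contained at the cost of unpacking the holomorphic structure.
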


We end this section by providing a second proof of Theorem \ref{14thjuly251} without using the result of \cite{ReAleJun} mentioned above. This requires the complex structure on the Heckeneberger-Kolb calculi. The following result follows from \cite{HK}, \cite{HKdR}, and~\cite{MarcoConj}.

\begin{prop} \label{prop:complexstructure}
For an irreducible quantum flag manifold  $\mathcal{O}_q(G/L_S)$, the Heckenberger-Kolb calculus  $(\Omega^{\bullet}_q(G/L_S), \wedge, d)$ admits precisely two left $\mathcal{O}_q(G)$-covariant complex structures, each of which is opposite to the other. 
\end{prop}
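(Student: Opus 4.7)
The plan is to reduce the classification problem to a representation-theoretic question via Takeuchi's equivalence $\Phi: \modz{\mathcal{O}_q(G)}{\mathcal{O}_q(G/L_S)} \to \lmod{\mathcal{O}_q(L_S)}{}$, which is applicable because $\mathcal{O}_q(G/L_S)$ is a quantum homogeneous space and $\mathcal{O}_q(L_S)$ is cosemisimple (as noted before Theorem \ref{thm:HKClass}). A left $\mathcal{O}_q(G)$-covariant almost complex structure on the first-order calculus amounts to a direct-sum decomposition $\Omega^1_q(G/L_S) = \Omega^{(1,0)} \oplus \Omega^{(0,1)}$ inside $\modz{\mathcal{O}_q(G)}{\mathcal{O}_q(G/L_S)}$ such that $\ast(\Omega^{(1,0)}) = \Omega^{(0,1)}$, and by monoidality and faithfulness of $\Phi$ this is equivalent to giving a corresponding decomposition of $\Phi(\Omega^1_q(G/L_S))$ as $\mathcal{O}_q(L_S)$-comodules, swapped by the induced $\ast$-structure on the fiber.

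First I would invoke Heckenberger and Kolb's computation (from \cite{HK,HKdR}) that $\Phi(\Omega^1_q(G/L_S))$ is the direct sum of exactly two irreducible, mutually non-isomorphic $\mathcal{O}_q(L_S)$-comodules $V_+$ and $V_-$, and that the $\ast$-structure on $\Omega^1_q(G/L_S)$ induces a conjugate-linear isomorphism swapping $V_+$ and $V_-$. Since $V_+$ and $V_-$ are non-isomorphic irreducibles, Schur's lemma forces any direct-sum decomposition of $V_+ \oplus V_-$ into two nonzero subcomodules to coincide with $(V_+, V_-)$ or $(V_-, V_+)$. Pulling back through $\Phi^{-1}$ yields exactly two candidate covariant almost complex structures on the first-order calculus, and they are manifestly opposites of each other.

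Next I would promote each almost complex structure to a genuine complex structure on the full calculus $\Omega^\bullet_q(G/L_S)$. Using the wedge product and the fact (cited in Theorem \ref{thm:HKClass}) that $\Omega^\bullet_q(G/L_S)$ has classical dimensions, define $\Omega^{(p,q)}$ to be the image under $\wedge$ of $(\Omega^{(1,0)})^{\otimes p} \otimes (\Omega^{(0,1)})^{\otimes q}$; covariance of $\wedge$ ensures $\Omega^{(p,q)} \in \modz{\mathcal{O}_q(G)}{\mathcal{O}_q(G/L_S)}$. The decomposition $\Omega^k = \bigoplus_{p+q=k} \Omega^{(p,q)}$ and the identity $\wedge(\Omega^{(p,q)} \otimes \Omega^{(r,s)}) \subseteq \Omega^{(p+r,q+s)}$ reduce to identities in $\lmod{\mathcal{O}_q(L_S)}{}$ via $\Phi$, where they follow from the Koszul-type structure of the fibers established in \cite{HK}. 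One then verifies the differential identity $d(\Omega^{(p,q)}) \subseteq \Omega^{(p+1,q)} \oplus \Omega^{(p,q+1)}$; this is the cleanest way invokes Matassa's theorem (\cite{MarcoConj}) which says any such bigrading produced on the Heckenberger-Kolb calculus satisfies the holomorphic-antiholomorphic splitting of $d$.

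The main obstacle I anticipate is precisely this last verification: while the irreducibility argument immediately limits the number of candidate bigradings of $\Omega^1$ to two, extending these bigradings to the full exterior algebra and checking that $d$ splits as $\partial + \bar{\partial}$ is not automatic, and it is here that one must rely essentially on the explicit Heckenberger-Kolb description of higher forms (or equivalently on Matassa's rigidity theorem), rather than on purely categorical reasoning. Once both candidate structures are shown to be genuine complex structures, the opposition relation is tautological from the swap $\Omega^{(1,0)} \leftrightarrow \Omega^{(0,1)}$, completing the proof.
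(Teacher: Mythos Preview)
Your proposal is correct and aligns with the paper's treatment: the paper does not give its own proof but simply records that the result follows from \cite{HK}, \cite{HKdR}, and \cite{MarcoConj}, and your outline is precisely a sketch of how those references combine---Takeuchi's equivalence plus the Heckenberger--Kolb decomposition of the cotangent fibre into two non-isomorphic irreducibles pins down the two almost complex structures via Schur, and Matassa's theorem supplies the integrability $d(\Omega^{(p,q)}) \subseteq \Omega^{(p+1,q)} \oplus \Omega^{(p,q+1)}$. Your own caveat about the last step is well placed: that is exactly the content outsourced to \cite{MarcoConj}.
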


We will use the notation $\bN^2_0$ to denote the set $(\bN \cup \{0\}) \times (\bN \cup \{0\}).  $  Let us recall  (\cite{MMF2}) that an $\mathcal{O}_q ( G ) $-covariant  complex structure $\Omega^{(\bullet,\bullet)}_q ( G/L_S ) $ on the $\mathcal{O}_q ( G ) $-covariant differential calculus $ (\Omega^\bullet_q ( G/L_S ), \wedge, d) $ 
  is an $\bN^2_0  $-algebra grading $\bigoplus_{(a,b)\in \bN^2_0} \Omega^{(a,b)} _q ( G/L_S ) $
  for $\Omega^{\bullet} _q ( G/L_S ) $ such that, for all $(a,b) \in \bN^2_0$, $ \Omega^{(a,b)} _q ( G/L_S ) $ is a left  $\mathcal{O}_q ( G ) $-comodule and moreover,  we have
  \[
    \Omega^k _q ( G/L_S )  = \bigoplus_{a+b = k} \Omega^{(a,b)} _q ( G/L_S ) ,\qquad
    \big(\Omega^{(a,b)} _q ( G/L_S ) \big)^* = \Omega^{(b,a)} _q ( G/L_S ) ,\qquad
     \]
      \[
   d(  \Omega^{(a,b)}_q ( G/L_S ))  \subseteq \Omega^{(a+1,b)} _q ( G/L_S )  \oplus \Omega^{(a,b+1)} _q ( G/L_S ) .
  \]

Here, $\Omega^{(0,0)} _q ( G/L_S )  = \Omega^0 _q ( G/L_S )  = \mathcal{O}_q ( G/L_S ) $. As $ \Omega^\bullet_q (G/L_S)  = \bigoplus_{(a,b)\in \bN^2_0} \Omega^{(a,b)}_q (G/L_S) $ is an $\bN^2_0$-algebra grading, it follows that $\Omega^{(a,b)}_q (G/L_S)$ is an $\mathcal{O}_q ( G/L_S )$-bimodule.

We should mention that a combination of \cite[Lemma 2.15 and Remark 2.16]{MMF2}    implies that the above definition is equivalent to the definition of complex structures given by Beggs and Smith in \cite{BS} as well as that by Khalkhali, Landi and van Suijlekom in \cite{KLvSPodles}.

For each $a, b \in \bN_0$,  $\pi^{(a,b)}: \Omega^{a + b} _q ( G/L_S )   \rightarrow \Omega^{(a, b)} _q ( G/L_S )  $ will denote the canonical projections associated to the decomposition $  \Omega^k _q ( G/L_S )  = \bigoplus_{a+b = k} \Omega^{(a,b)} _q ( G/L_S ) $. Moreover, $\partial$ and $\overline{\partial}$ will denote the maps
$$ \partial:= \pi^{(a + 1, b)} \circ d: \Omega^{(a, b)} _q ( G/L_S )  \rightarrow \Omega^{(a + 1, b)} _q ( G/L_S )  ~ \text{and}$$
$$ \overline{\partial}:= \pi^{(a, b + 1)} \circ d: \Omega^{(a,b)} _q ( G/L_S )  \rightarrow \Omega^{(a, b + 1)} _q ( G/L_S ). $$
Since the calculus  $\Omega^{(\bullet,\bullet)}_q ( G/L_S ) $ is $\mathcal{O}_q ( G ) $-covariant, it follows that  the projections $\pi^{(a,b)}$ and $d$ are $A$-covariant and so the maps $\partial$ and $\overline{\partial}$ are also $A$-covariant.

The opposite complex structure on the Heckenberger-Kolb calculi  is given by the $\bN^2_0$-graded algebra
 $\bigoplus_{(a,b)\in \bN^2_0} \Omega^{(a,b), \text{op}}$, where
$$ \Omega^{(a,b), \text{op}}_q (G/L_S) := \Omega^{(b, a)}_q (G/L_S).  $$
Consequently, the $\partial$ and $\overline{\partial}$-operators of the opposite complex structure are given by:
$$ \partial_{\text{op}}:= \overline{\partial}: \Omega^{(a, b), \text{op}}_q (G/L_S) \rightarrow \Omega^{(a + 1, b), \text{op}}_q (G/L_S), ~  \overline{\partial}_{\text{op}}:= \partial: \Omega^{(a, b), \text{op}}_q (G/L_S) \rightarrow \Omega^{(a, b + 1), \text{op}}_q (G/L_S). $$
It is well-known (\cite[Proposition 3.11]{HKdR}) that the Heckenberger-Kolb calculi are factorizable, i.e,
the wedge maps from  $ \Omega^{(a,0)}_q ( G/L_S ) \otimes_{\mathcal{O}_q ( G/L_S )} \Omega^{(0,b)}_q ( G/L_S ) $ and $ \Omega^{(0,b)}_q ( G/L_S ) \otimes_{\mathcal{O}_q ( G/L_S )} \Omega^{(a,0)}_q ( G/L_S )  $ to $\Omega^{(a,b)}_q ( G/L_S )$
are isomorphisms with inverses $\theta^{(a,b)}_r$ and $\theta^{(a,b)}_l$ respectively.

As a consequence of factorizability, $\Omega^{(1,0)}_q ( G/L_S )$ and $\Omega^{(0,1)}_q ( G/L_S )$ are holomorphic bimodules (see  \cite[Definition 4.3]{BS}) with respect to the usual and opposite complex structure respectively. 

Now let $\mathscr{H}_1$ and $\mathscr{H}_2$ be  Hermitian metrics  on $\Omega^{(1,0)}_q ( G/L_S )$ and $\Omega^{(0,1)}_q ( G/L_S )$ respectively. By virtue of the existence of Chern connections \cite[Theorem 8.53 and Proposition 8.54]{BeggsMajid:Leabh},   there are unique connections $\nabla_{\mathrm{Ch}}$ on $\Omega^{(1,0)}_q(G/L_S)$ and   $\nabla_{\mathrm{Ch,op}}$ on $\Omega^{(0,1)}_q(G/L_S)$ such that $\nabla_{\mathrm{Ch}}$ is compatible with $\kH_1,$ $\nabla_{\mathrm{Ch,op}}$ is compatible with $\kH_2$ (in the sense of Definition \ref{19thoct231}) such that the following conditions are satisfied:
 $$ (\pi^{(0,1)} \otimes_{\mathcal{O}_q(G/L_S)} \id) \nabla_{\mathrm{Ch}}= \theta^{(1,1)}_l\circ \ol{\partial}, ~ (\pi^{(1,0)} \otimes_{\mathcal{O}_q(G/L_S)} \id) \nabla_{\mathrm{Ch,op}}= \theta^{(1,1)}_r\circ {\partial}.$$ 
We refer to \cite[Subsection 5.2 and Subsection 5.3]{LeviCivitaHK} for the details. 

\vspace{4mm}

{\bf Second proof of Theorem \ref{14thjuly251}:}

We already have the existence of a covariant $s: \Omega^2_q ( G/L_S ) \rightarrow \Omega^1_q ( G/L_S ) \otimes_{\mathcal{O}_q ( G/L_S )} \Omega^1_q ( G/L_S ) $ from Proposition \ref{14thjuly252}. The connection $\nabla$ of Theorem \ref{thm:Levi-CivitaHK} is torsionless in the sense of Definition \ref{9thjan252}. Therefore, by virtue of Theorem \ref{14thjan251}, the connection $\sqrt{-1}\nabla$ is torsionless for the weak second order differential structure $( \Omega^1_q (G/L_S), \sqrt{-1} d, \psi_s).$ Thus, we are left to show that $\sqrt{-1} \nabla$  is Hermitian  with respect to the weak $\dagger$-bimodule $(\Omega^1_q ( G/L_S ), \dagger_{\star_{\Omega^1}}, {}_{\mathcal{O}_q (G/L_S)}\langle\langle ~, ~ \rangle\rangle ).$

By \cite[Proposition 3.7]{LeviCivitaHK}, the  real covariant metric $\metric$ in the statement of Theorem \ref{14thjuly251}  satisfies the condition
\begin{equation*} \label{18thapril24jb1}
       (\omega, \eta)=0\text{ if }\omega, \eta \in \Omega^{(1,0)}_q(G/L_S) \text{ or if }\omega, \eta \in \Omega^{(0,1)}_q(G/L_S).
\end{equation*}
Let us define
    \begin{equation*}
            \mathscr{H}_1:\overline{\Omega^{(1,0)}_q ( G/L_S )} \to \prescript{}{\mathcal{O}_q ( G/L_S )}{ \Hom (\Omega^{(1,0)}_q ( G/L_S ), \mathcal{O}_q ( G/L_S ))} \quad \text{by} \quad \mathscr{H}_1(\overline{\omega})(\eta)= (\eta, \omega^*) ~ \text{and}
        \end{equation*}
        
        \begin{equation*}
           \mathscr{H}_2:\overline{\Omega^{(0,1)}_q ( G/L_S )} \to \prescript{}{\mathcal{O}_q ( G/L_S )}{ \Hom (\Omega^{(0,1)}, \mathcal{O}_q ( G/L_S ))} \quad \text{by} \quad \mathscr{H}_2(\overline{\omega})(\eta)=  (\eta, \omega^*).
        \end{equation*}
        Then it follows (see \cite[Theorem 4.4]{LeviCivitaHK}) that $\mathscr{H}_1$ and   $\mathscr{H}_2$ are $\mathcal{O}_q (G)$-covariant Hermitian metrics  on $ \Omega^{(1,0)}_q ( G/L_S ) $ and $\Omega^{(0,1)}_q ( G/L_S )$ respectively and we have $\mathscr{H}_g = \mathscr{H}_1 \oplus \mathscr{H}_2,$ where $\mathscr{H}_g$ is the Hermitian metric constructed from $\metric$ in  Proposition \ref{27thdec242jb}.

Then, as explained above, the Chern connections $\nabla_{\mathrm{Ch}} $ on $\Omega^{(1,0)}_q ( G/L_S )$ and $ \nabla_{\mathrm{Ch,op}}$ on on $\Omega^{(0,1)}_q ( G/L_S )$ are compatible with $\mathscr{H}_1$ and $\mathscr{H}_2$ respectively. From the proof of Theorem \ref{thm:Levi-CivitaHK}, we know that  $\nabla = \nabla_{\mathrm{Ch}} + \nabla_{\mathrm{Ch,op}}$ with respect to the decomposition $ \Omega^1_q ( G/L_S ) =  \Omega^{(1,0)}_q ( G/L_S ) \oplus \Omega^{(0,1)}_q ( G/L_S )$. Therefore, $\nabla$ is compatible with $\mathscr{H}_g.$ Thus, Lemma \ref{10thjan2512} implies that $\sqrt{-1} \nabla$ is Hermitian with respect to $(\Omega^1_q ( G/L_S ), \dagger_{\star_{\Omega^1}}, {}_{\mathcal{O}_q (G/L_S)}\langle\langle ~, ~ \rangle\rangle ).$
\qed

\appendix

\section{Spectral triples and \texorpdfstring{$*$}{}-differential calculi} \label{24thjune251}

In this appendix, we point out that the theory of spectral triple fit into the framework of $\ast$-differential calculi.
In the following definition, the symbol $B ( H )$ will stand for the set of {all} bounded linear operators on a Hilbert space $H.$

\begin{definition}[\cite{connes}]
A spectral triple on a unital $\ast$-algebra $B$ is a triple $( B, H, D ),$ where $H$ is a {separable} Hilbert space, $B$ is a unital $\ast$-subalgebra of $B ( H ) $  and $D$ is a (typically unbounded) self-adjoint operator on $H$ such that for all $b \in B,$ $[ D, \pi ( b ) ] $ extends to a bounded operator on $H.$    
\end{definition}

It is standard to assume that $(i+D)^{-1}$ is a compact operator but like in \cite{meslandrennie1}, we won't need this condition here.

Consider the first order differential calculus defined by the pair $( \Omega^1_C ( B ), d_C ), $ where 
$$\Omega^1_C ( B ) = \text{span} \big\{ [ D,  b  ] c : b, c \in B \big\}  \subseteq B ( H ) ~ \text{and} ~ d_C ( b ) = \sqrt{-1} [ D,  b ]. $$  
By construction, $\Omega^1_C ( B )$ is closed under the usual involution $\ast_{B ( H )}$ on $B ( H ) $ and due to the definition of $d_C,$  the pair $( \Omega^1_C ( B ), d_C ), $ is a first order $\ast$-differential calculus.  

Now we can apply the recipe of Definition \ref{20thmay251} to the triple $(\Omega^1_C ( B ), d_C, B ( H ) )$ to obtain Connes' differential calculus $(\Omega^\bullet_C, \wedge_C, d_C)$ for the spectral triple. We refer to \cite[Section 7.2]{landibook} for  exposition.  

It is well-known and easy to check  that $J^\ast_d:= \oplus_{k \geq 0} J^k_d$ is a two-sided graded differential $\ast$-ideal of $\oplus_{k \geq 0} m_k ( \Omega^k_u ) $ and hence, each $\Omega^k_C ( B ) $ is closed under the antilinear involution $\ast$ defined by
$$ ( T + J^k_d   )^\ast :=  T^{\ast_{B ( H )}} + J^k_d ~ \text{for all} ~ T \in m_k ( \Omega^k_u ( B ) ),  $$
where $\ast_{B(H )}$ is the canonical involution on $B ( H ) $ and the symbol $m_k$ is as  defined in Section \ref{19thjune251}. Unfortunately, the conditions in \eqref{eq:starcal} may fail to hold for the Connes' differential calculus. For example, if $b_1, b_2 \in B,$ then
\begin{eqnarray*}
( d_C ( b_1 ) \wedge_C d_C ( b_2 )   )^\ast &=& (  [\sqrt{-1} D, b_1  ] [\sqrt{-1} D, b_2  ]  )^{\ast_{B(H)}} + J^2_d\\
&=&   [\sqrt{-1} D,  b^\ast_2  ] [\sqrt{-1} D,  b^\ast_1  ] + J^2_d\\
&=& d_C ( b^\ast_2 ) \wedge_C d_C ( b^\ast_1 )
\end{eqnarray*}
which may not be equal to $- d_C ( b^\ast_2 ) \wedge_C d_C ( b^\ast_1 )$ (equal if and only if it is $0$). 

However, this problem can be rectified by changing the involution on the higher forms.

\begin{lem} \label{thm:star_calculus}
    Suppose {that} $\dc $ is a differential calculus on a $\ast$-algebra $B$. Also assume  that there exists a graded antilinear involution $\ast$ on $\Omega^\bullet$ such that for all $b,b_0, b_1,  \cdots b_{k+1}\in B$, 
    $$d(b^*)=(db)^* \mathrm{~and~} \big(b_0 d(b_1) \wedge \cdots \wedge d(b_k) b_{k + 1} \big)^*= b^*_{k + 1} d(b_k^*) \wedge d(b_{k-1}^*)\cdots \wedge d(b_1^*) b_0^*.$$    
    Consider the graded antilinear involution $*_1$ on  $\form{\bullet}$ defined by 
    \begin{equation}\label{new_inv_bg}
    *_1(b_0 db_1 \wedge \cdots \wedge db_k b_{k+1}) := \begin{cases}
    b_{k+1}^*db_k^* \wedge db_{k-1}^* \wedge \cdots \wedge db_1^* b_0^* & \mathrm{for ~} k\equiv 0,1 ~\mathrm{mod} ~4 \\
        - b_{k+1}^* db_k^* \wedge db_{k-1}^* \wedge \cdots \wedge db_1^* b_0^* & \mathrm{for ~} k\equiv 2,3~ \mathrm{mod} ~4 
    \end{cases}.
    \end{equation}
    Then, $\dc$ is a $\ast$-differential calculus with respect to the involution $\ast_1.$ 
\end{lem}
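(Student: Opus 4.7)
The plan is to rewrite $*_1$ in a compact form and then reduce the three required axioms (being a graded antilinear involution extending $*$ on $B$; commuting with $d$; and satisfying the graded anticommutation rule with $\wedge$) to two arithmetic identities on the sign sequence. Explicitly, I would define
\[
\varepsilon(k) := (-1)^{k(k-1)/2}
\]
and observe, by checking the cases $k\equiv 0,1,2,3\pmod{4}$, that $\varepsilon(k)=1$ for $k\equiv 0,1\pmod{4}$ and $\varepsilon(k)=-1$ for $k\equiv 2,3\pmod{4}$. Consequently $*_1(\omega)=\varepsilon(k)\,\omega^{*}$ for all $\omega\in\Omega^{k}$, a formula well-defined on all of $\Omega^{k}$ because $*$ itself is. Since $\varepsilon(0)=1$, $*_1$ agrees with $*$ on $\Omega^{0}=B$; since $\varepsilon(k)^{2}=1$ and $\varepsilon(k)$ is real, $*_1$ is graded, antilinear, and involutive because $*$ is.

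Next I would verify that $d$ commutes with $*_1$. Take a typical generator $\omega=b_0\,db_1\wedge\cdots\wedge db_k\in\Omega^{k}$. On the one hand $(d\omega)^{*_1}=\varepsilon(k+1)(db_0\wedge\cdots\wedge db_k)^{*}=\varepsilon(k+1)\,db_k^{*}\wedge\cdots\wedge db_0^{*}$ by the assumed formula for $*$. On the other hand, $*_1(\omega)=\varepsilon(k)\,db_k^{*}\wedge\cdots\wedge db_1^{*}\cdot b_0^{*}$, and applying the graded Leibniz rule together with $d^{2}=0$ gives $d(*_1(\omega))=\varepsilon(k)(-1)^{k}\,db_k^{*}\wedge\cdots\wedge db_1^{*}\wedge db_0^{*}$. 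The two agree precisely when
\[
\varepsilon(k+1)=(-1)^{k}\,\varepsilon(k),
\]
which is immediate from the formula $\varepsilon(k)=(-1)^{k(k-1)/2}$ (or, equivalently, by direct inspection modulo $4$). The required identity for general elements of $\Omega^{k}$ follows by $\mathbb{C}$-antilinearity. Note that $d$ and the original $*$ do not, in general, commute on $k$-forms with $k\geq 1$; the sign $\varepsilon$ is precisely the correction that repairs this failure.

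For the graded-anticommutation rule I would argue as follows. From the assumed formula for $*$ on monomials one first checks, by evaluating on generic $\omega=b_0\,db_1\wedge\cdots\wedge db_k\,b_{k+1}$ and $\nu=c_0\,dc_1\wedge\cdots\wedge dc_l\,c_{l+1}$ and extending bilinearly, the plain reversal identity $(\omega\wedge\nu)^{*}=\nu^{*}\wedge\omega^{*}$ (with no sign). Hence
\[
(\omega\wedge\nu)^{*_1}=\varepsilon(k+l)\,\nu^{*}\wedge\omega^{*},\qquad \nu^{*_1}\wedge\omega^{*_1}=\varepsilon(k)\varepsilon(l)\,\nu^{*}\wedge\omega^{*},
\]
so the axiom $(\omega\wedge\nu)^{*_1}=(-1)^{kl}\nu^{*_1}\wedge\omega^{*_1}$ reduces to
\[
\varepsilon(k+l)=(-1)^{kl}\,\varepsilon(k)\,\varepsilon(l).
\]
This is the combinatorial identity $\tfrac{(k+l)(k+l-1)}{2}-\tfrac{k(k-1)}{2}-\tfrac{l(l-1)}{2}=kl$, which is a one-line calculation.

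There is no real obstacle here; the only mild subtlety is that the hypothesis on $*$ is stated only on the monomial basis $b_0\,db_1\wedge\cdots\wedge db_k\,b_{k+1}$, so one has to extend the reversal identity $(\omega\wedge\nu)^{*}=\nu^{*}\wedge\omega^{*}$ by bilinearity and use the graded Leibniz rule for $d$ together with $d^{2}=0$ in the Leibniz calculation above. Once those two sign identities on $\varepsilon$ are recorded, the proof is essentially complete.
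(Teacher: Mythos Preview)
Your proof is correct and, in fact, cleaner than the paper's. The paper verifies the two axioms \eqref{eq:starcal} by brute-force case analysis modulo $4$: four cases for $d(\omega^{*_1})=(d\omega)^{*_1}$ and sixteen cases for $(\omega\wedge\nu)^{*_1}=(-1)^{kl}\nu^{*_1}\wedge\omega^{*_1}$, illustrating one representative case of each and leaving the rest to the reader. You instead recognise the sign as the triangular number $\varepsilon(k)=(-1)^{k(k-1)/2}$, write $*_1=\varepsilon(k)\cdot *$ on $\Omega^k$, and reduce everything to the two arithmetic identities $\varepsilon(k+1)=(-1)^{k}\varepsilon(k)$ and $\varepsilon(k+l)=(-1)^{kl}\varepsilon(k)\varepsilon(l)$, each of which is a one-line calculation. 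This buys you a uniform argument with no case splitting, and it also makes transparent \emph{why} the modification works: the original $*$ reverses products without sign, and $\varepsilon$ is precisely the coboundary that converts plain reversal into graded reversal. The paper's case-by-case check is equally valid but hides this structure.
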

\begin{proof}
    By the above discussion, we are only to required to check the validity of the equation \eqref{eq:starcal}. {Consider} $\omega = b_0 db_1 \wedge \cdots \wedge db_k \in \Omega^k. $ {To prove} the equation $ d ( \omega^{\ast_1} ) = d ( \omega )^{\ast_1}, $ we verify it for the cases $k = 0,1,2,3 $ (mod $4$) individually. For example, if $k = 2$ (mod $4$), then
\begin{eqnarray*}
 d ( \omega^{\ast_1}  ) &=& d ( - db^*_k \wedge \cdots \wedge d ( b^*_1 ) b^*_0  ) = - ( - 1 )^k db^*_k \wedge \cdots \wedge d ( b^*_1 ) \wedge d ( b^*_0 )\\
 &=& - db^*_k \wedge \cdots \wedge d ( b^*_1 ) \wedge d ( b^*_0 ) = ( d b_0 \wedge \cdots \wedge d ( b_k )  )^{\ast_1} = ( d \omega )^{\ast_1}. 
\end{eqnarray*}
 %
%
%
Next, if 
$ \omega = b_0 db_1 \wedge \cdots \wedge db_k \in \form{k}  ~ \text{and} ~ \eta = da_1 \wedge \wedge da_l a_{l + 1} \in \form{l},$
then the equation $ (\omega \wedge \eta )^{\ast_1} = (- 1)^{kl} \eta^{\ast_1} \wedge \omega^{\ast_1} $ can be proved by checking it for the {sixteen} cases $k,l = 0,1,2,3 $ (mod $4$). 
We illustrate this by taking $k = 2 $ (mod $4$) and $l = 2$ (mod $4$). In this case, for $\omega = d(a_1) \wedge d(a_2) \wedge \cdots\wedge d(a_{l})a_{l+1}$  and $\eta \in b_0 \wedge db_1\wedge\cdots  db_l,$ 
\begin{eqnarray*}
 ( \omega \wedge \eta )^{\ast_1} &=& a^*_{l + 1} d ( a^*_l ) \wedge \cdots \wedge d ( a^*_1 ) \wedge d ( b^*_k ) \wedge \cdots \wedge d ( b^*_1 ) b^*_0\\
 &=& ( - 1 ) ( -  a^*_{l + 1} d ( a^*_l ) \wedge \cdots \wedge d ( a^*_1 ) ) \wedge ( -  d ( b^*_k ) \wedge \cdots \wedge d ( b^*_1 ) b^*_0)\\
 &=& ( - 1 )^{kl} \eta^{\ast_1} \wedge \omega^{\ast_1}.
\end{eqnarray*}
This proves the theorem.
\end{proof}


Since the Connes' calculus $(\Omega^\bullet_C, \wedge_C, d_C)$ for a spectral triple satisfies the conditions of  Lemma \ref{thm:star_calculus}, we have the following corollary:

\begin{cor}
If $(B, H, D)$ is a spectral triple and $\Omega^\bullet_C$ is equipped with the $\ast$-structure as in \eqref{new_inv_bg}, then $( \Omega^\bullet_C, \wedge_C, d_C  )$  is a $\ast$-differential calculus.  
\end{cor}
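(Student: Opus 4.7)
The plan is to verify the two hypotheses of Lemma \ref{thm:star_calculus} for the Connes' calculus $(\Omega^\bullet_C, \wedge_C, d_C)$ equipped with the natural graded antilinear involution descending from $\ast_{B(H)}$. This descent is legitimate since $J^\ast_d$ is a two-sided graded differential $\ast$-ideal of $\oplus_k m_k(\Omega^k_u)$, as already noted in the discussion preceding the lemma.

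First I would verify the derivation compatibility $d_C(b^\ast) = (d_C b)^\ast$. The key identity is $[D,b]^{\ast_{B(H)}} = b^\ast D - D b^\ast = -[D,b^\ast]$, which is valid because $D$ is self-adjoint. Combined with $d_C(b) = \sqrt{-1}\,[D,b]$, conjugation of the scalar $\sqrt{-1}$ cancels the extra minus sign, producing $(d_C b)^\ast = \sqrt{-1}\,[D,b^\ast] = d_C(b^\ast)$. This is precisely the role of the factor $\sqrt{-1}$ in the definition of $d_C$; the same trick is what allows Proposition \ref{prop:oldnewdc} to reconcile the two sign conventions for first order calculi.

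Next I would verify the graded anti-multiplicative formula demanded by Lemma \ref{thm:star_calculus}, namely
$$
\bigl(b_0\, d_C(b_1) \wedge_C \cdots \wedge_C d_C(b_k)\, b_{k+1}\bigr)^\ast = b^\ast_{k+1}\, d_C(b^\ast_k) \wedge_C \cdots \wedge_C d_C(b^\ast_1)\, b^\ast_0.
$$
Writing $b_0\, d_C(b_1) \wedge_C \cdots \wedge_C d_C(b_k)\, b_{k+1} = (\sqrt{-1})^k b_0\,[D,b_1]\cdots[D,b_k]\,b_{k+1}$ inside $B(H)$ and applying $\ast_{B(H)}$, the anti-multiplicativity of $\ast_{B(H)}$ reverses the product, each of the $k$ commutators contributes the sign from $[D,b_i]^\ast = -[D,b^\ast_i]$, and complex conjugation of $(\sqrt{-1})^k$ contributes another factor $(-1)^k$; the two sign contributions cancel, yielding the desired identity after passing to the quotient by $J^\ast_d$. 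With both hypotheses confirmed, Lemma \ref{thm:star_calculus} applies directly and the modified involution $\ast_1$ of \eqref{new_inv_bg} makes $(\Omega^\bullet_C, \wedge_C, d_C)$ a $\ast$-differential calculus. No step is a genuine obstacle; the content is entirely in the bookkeeping of signs arising from the self-adjointness of $D$ and the scalar $\sqrt{-1}$.
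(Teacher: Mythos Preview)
Your proposal is correct and follows exactly the approach the paper takes: the paper simply asserts that the Connes' calculus satisfies the hypotheses of Lemma \ref{thm:star_calculus} and invokes the lemma, while you have spelled out the sign bookkeeping (self-adjointness of $D$ together with the factor $\sqrt{-1}$) that makes those hypotheses hold. There is nothing to add.
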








\bibliographystyle{alpha} 
\bibliography{references}

\end{document}